\begin{document}

\begin{frontmatter}
  \title{Classifying Topoi in Synthetic Guarded Domain Theory}
  \subtitle{The Universal Property of Multi-clock Guarded Recursion}
  \author{Daniele Palombi\thanksref{dpl}}
  \author{Jonathan Sterling\thanksref{jms}}

  \address[dpl]{Department of Mathematics\\Sapienza University of Rome}
  \address[jms]{Department of Computer Science\\Aarhus University}

  \begin{abstract}
    Several different topoi have played an important role in the development and
    applications of \emph{synthetic guarded domain theory} (SGDT), a new kind of
    synthetic domain theory that abstracts the concept of \emph{guarded recursion}
    frequently employed in the semantics of programming languages. In order to
    unify the accounts of guarded recursion and coinduction, several authors have
    enriched SGDT with multiple ``clocks'' parameterizing different time-streams,
    leading to more complex and difficult to understand topos models.
    Until now these topoi have been understood very concretely \emph{qua}
    categories of presheaves, and the logico-geometrical question of what
    theories these topoi classify has remained open. We show that several
    important topos models of SGDT classify very simple geometric theories, and
    that the passage to various forms of multi-clock guarded recursion can be
    rephrased more compositionally in terms of the lower bagtopos
    construction of Vickers and variations thereon due to Johnstone.
    We contribute to the consolidation of SGDT by isolating the universal
    property of multi-clock guarded recursion as a modular construction that
    applies to any topos model of single-clock guarded recursion.
  \end{abstract}
  \begin{keyword}
  Classifying topos, synthetic domain theory, guarded domain theory 
  \end{keyword}
\end{frontmatter}

\section{Introduction}

\subsection{Synthetic guarded domain theory}

Beginning with Scott and Strachey's groundbreaking investigations in 1969, the
scientific study of programming semantics has been guided by the search for a
topology of computation --- embodied in monoidal closed categories of spaces
called \DefEmph{domains} whose points can be thought of as the values of datatypes
and computer
programs~\cite{scott:1970:outline,scott:1972,scott:1976,scott:1982}. The thesis
of denotational semantics under Scott and Strachey is that the computational
behavior of expressions in a programming language can be studied by
characterizing what values they take when interpreted as continuous functions
between domains; the advantage of denotational semantics over the
direct/operational study of program behavior is that, unlike the latter, it is
compositional and amenable to mathematical methods of reduction and
abstraction.

The need to reason about increasingly complex programming languages has drawn
researchers toward alternative theories of domains based on (complete, bounded,
ultra-) \DefEmph{metric
spaces}~\cite{macqueen-plotkin-sethi:1984,america-rutten:1987,rutten-turi:1992,breugel-warmerdam:1994,birkedal-stovring-thamsborg:2010:domain}.
Metric domain theory has proved instrumental in untangling the circularities of
programming language semantics involving higher-order
store~\cite{birkedal-stovring-thamsborg:2010}; categories of metric spaces come
equipped with a pointed endofunctor $\frac{1}{2}\cdot-$ to \emph{scale} a given
space by half, which in combination with Banach's fixed point theorem can be
used to prove algebraic compactness relative to mixed-variance endofunctors
whose recursive variables are guarded by $\frac{1}{2}\cdot-$.
From this one obtains a relatively simple interpretation of both recursive types and programs.

Ideas from metric domain theory have been later simplified and generalized to
categories of sheaves on frames with well-founded bases and the even simpler
case of presheaves on well-founded posets \cite{bmss:2011}, allowing for a
\emph{synthetic} and topos-theoretic approach to guarded domain theory
along the lines of synthetic domain theory~\cite{hyland:1991} or synthetic
differential geometry~\cite{kock:2006}. The core idea of \DefEmph{synthetic
guarded domain theory} (\textbf{SGDT}) is to work with a topos $\SCat$ that is equipped with an
endofunctor $\Ltr$ called the \DefEmph{later modality}, together with a natural
transformation $\Mor[\Next]{\Idn{\SCat}}{\Ltr}$ and a ``guarded'' fixed point operator
$-^\dag$ ensuring that for each $\Mor[f]{\Ltr A}{A}$, there exists a unique
$\Mor[f^\dag]{\ObjTerm{\SCat}}{A}$ such that $f \circ \Next_A\circ f^\dag =
f^\dag$.  Under mild assumptions (\eg left exactness of $\Ltr$), it can be seen
that $\Ltr$ extends to an endofunctor on the fundamental fibration
$\FibMor{\FFib{\SCat}}{\SCat}$ and therefore gives rise to a true connective
in the internal dependent type theory of $\SCat$.

Synthetic guarded domain theory has been employed as a metalanguage for the
denotational \emph{and} operational semantics of simple programming languages
such as PCF and
FPC~\cite{paviotti-mogelberg-birkedal:2015,mogelberg-paviotti:2016,paviotti:2016};
the models of \opcit can be seen as synthetic versions of Escard\'o's
``analytic'' metric model of PCF~\cite{escardo:1999}. Synthetic guarded domain
theory also provides the mathematical basis~\cite{bizjak-birkedal:2018} for \textbf{Iris}, a higher-order guarded
separation logic that has been used to develop operationally-based program
logics for sophisticated programming languages involving higher-order store,
concurrency, and a number of other computational effects~\cite{iris:2015,iris:2018}.

\subsection{Multi-clock guarded recursion and coinduction}

Unlike both classical and ordinary synthetic domain theory, the synthetic
guarded domain theory is \emph{effective} in the sense that it gives rise to
type theories satisfying the \emph{canonicity}
property~\cite{gratzer-birkedal:2022}; this means that synthetic guarded domain
theory is a \emph{programming language} in addition to a semantic universe for
denotational semantics. One early application of guarded recursion in this
sense was to provide a more ergonomic and compositional method to write
programs involving coinductive types or \emph{final coalgebras}.

Consider the type of infinite streams $\Con{S}A$ as an example; this
type is the final coalgebra for the endofunctor $\Con{F}_AX = A\times X$, and
because $\Con{F}_A$ is $\omega$-cocontinuous we may compute $\Con{S}A$ as the
limit of the $\omega$-chain $\Con{F}_A^n1 \cong A^n$ by Ad\'amek's theorem. A
stream producer $\Mor[\alpha]{X}{\Con{S}A}$ must therefore decompose into a
cone of finite approximations $\Mor[\alpha_n]{X}{A^n}$ for all $n\in \omega$;
in simpler terms, we must be able to compute any finite approximation of a
stream.
It is not difficult to imagine programming \emph{partial} functions on streams
$\Mor|partial|[\beta]{\Con{S}A}{\Con{S}B}$ by general recursion; such a
programming style is easily supported in languages like Haskell. But what is
the appropriate linguistic construct for defining \emph{total} functions
$\Mor[\beta]{\Con{S}A}{\Con{S}B}$? Just as in the dual case for
\emph{inductive} data, a programming language must verify that recursive calls
are justified and reject any recursive calls that would make (for instance) the
projections $\Mor[\beta_n]{\Con{S}A}{B^n}$ ill-defined.

One method to ensure that recursive functions on coinductive types are total is
to impose a \DefEmph{syntactic guardedness} check: every recursive call must be
wrapped in a call to a constructor. Syntactic guardedness checks are employed
in several type theoretic languages, such as Agda~\cite{norell:2007},
Coq~\cite{coq:reference-manual}, and Idris~\cite{brady:2013,brady:2021}, but
they are unfortunately very brittle and not at all conducive to compositional
and modular programming with higher-order functions. Type-based approaches such
as \DefEmph{sized types} have been proposed as a more compositional alternative
to syntactic checks~\cite{hughes-pareto-sabry:1996}, but the meaning of sized
types as they are used in practice remains poorly understood --- for instance,
the version of sized types implemented in Agda is clearly inconsistent as it
asserts the well-foundedness of an order with $\infty\prec\infty$; yet it
remains unclear whether many sized Agda programs would survive the transition
to a system in which $\infty\not\prec\infty$.\footnote{See
\url{https://github.com/agda/agda/issues/2820} for a discussion of the
inconsistency of Agda's sized types.}

A \emph{sound} and thus more promising type-based approach to ensuring the
guardedness of recursive calls arises from the later modality $\Ltr$, first viewed as a programming construct by
Nakano~\cite{nakano:2000}.\footnote{The calculus of Nakano~\cite{nakano:2000} was subsequently connected to metric domain theory by Birkedal,
Schwinghammer, and St\o{}vring~\cite{birkedal-schwinghammer-stovring:2010}.} The
idea is to approximate the coinductive type $\Con{S}A \cong A \times\Con{S}A$
by the \emph{guarded recursive} type $\Con{S}\Sub{\Ltr}A \cong A \times
\Ltr{\Con{S}\Sub{\Ltr}A}$:
\[
  \Con{S}\Sub{\Ltr} : \Con{Type} \to \Con{Type}\qquad
  \prn{\dblcolon}: A \to \Ltr \Con{S}\Sub{\Ltr} A \to \Con{S}\Sub{\Ltr} A
\]

The guarded fixed point operator then allows recursive definitions of functions
on guarded streams, with the caveat that recursive calls must appear underneath
the later modality. While this semantic / type-based restriction does
automatically ensure totality, it is too conservative: we cannot, for instance,
define the projection functions $\Con{S}\Sub{\Ltr}A\to A^n$. For instance, the
following attempted definition is not well-typed:
\[
  \begin{mathblock}
    \Con{take} : \mathbb{N} \to \Con{S}\Sub{\Ltr} A \to \Con{List} \; A\\
    \Con{take} \; 0 \; u = []\\
    \Con{take} \; (n+1) \; (x \dblcolon u) = x  \dblcolon  {\color{red}\Con{take} \; n \; u}
  \end{mathblock}
\]

\subsubsection{Atkey and McBride's clock-indexed guarded recursion}

At the heart of the problem discussed above is the fact that the guarded
streams only approximate the coinductive streams. The remarkable suggestion of
Atkey and McBride~\cite{atkey-mcbride:2013} is to \emph{define} real
coinductive types in terms of their guarded approximations by adding an
additional notion of \DefEmph{clock} to the language; with this combination of
features, arbitrary functions on coinductive types can be defined using guarded
recursion. In the setting of Atkey and McBride, the later modality ensures that
functions are well-defined and the clocks allow the later modality to be
\emph{removed} in a type-constrained way. The language of Atkey and McBride
contains a new sort of clocks $k$, together with a clock-indexed family of
later modalities $\Ltr_k$ as well as a \DefEmph{clock quantifier} $\forall
k.A\brk{k}$. In the case where $A$ does not depend on the clock variable $k$, a
\DefEmph{clock irrelevance} principle is asserted stating that $A = \forall
k.A$; finally the canonical map $\Mor[\lambda x.\Lambda
k.\Next\prn{x\brk{k}}]{\forall k.A\brk{k}}{\forall k.\Ltr_{k}A\brk{k}}$ is
asserted to have an inverse $\Con{force}$.

A clock can be thought of metaphorically as a ``time stream''; thus an element
of $\forall k. \Ltr_kA$ exhibits an element of $\Ltr_kA$ in all time streams
$k$; thus under this metaphor, the $\Con{force}$ operation simply instantiates
this family at an \emph{earlier} time stream to obtain an element of $A$.  With
the clock-indexed later modality in hand, it is now possible to define
coinductive streams in terms of their guarded approximations by setting
$\Con{S}A \coloneqq \forall k.\Con{S}\Sub{\Ltr_k}A$; thus we may use guarded
recursion to define the $\Con{take}$ function on coinductive streams:
\begin{gather*}
  \begin{mathblock}
    \Con{S}_{\Ltr_k} : \Con{Type} \to \Con{Type}\\
    (\dblcolon) : A \to \Ltr_k \Con{S}_{\Ltr_k} A \to \Con{S}_{\Ltr_k} A\\
  \end{mathblock}
  \qquad
  \begin{mathblock}
    \Con{uncons}_k : \Con{S}_{\Ltr_k} A \to A \times (\Ltr_k \Con{S}_{\Ltr_k} A)\\
    \Con{uncons}_k \; (x \dblcolon u) = (x,u)
  \end{mathblock}
  \qquad
  \begin{mathblock}
    \Con{head} : \Con{S}A \to A\\
    \Con{head} \; u = \Lambda k. \Con{fst}\;\prn{\Con{uncons}_k\; u\brk{k}}
  \end{mathblock}
  \\[8pt]
  \begin{mathblock}
    \Con{tail} : \Con{S} A \to \Con{S}A\\
    \Con{tail} \; u = \Con{force}(\Lambda k. \Con{snd}\;\prn{\Con{uncons}_k\;u\brk{k}})
  \end{mathblock}
  \qquad
  \begin{mathblock}
    \Con{take} : \mathbb{N} \to \Con{S} A \to \Con{List} \; A\\
    \Con{take} \; 0 \; u = []\\
    \Con{take} \; (\Con{suc} \; n) \; u = (\Con{head} \; u)  \dblcolon  (\Con{take} \; n \; (\Con{tail} \; u))
  \end{mathblock}
\end{gather*}

\subsubsection{Bizjak and M\o{}gelberg's clock synchronization; Sterling and Harper's variant}

One aspect of Atkey and McBride's clocks that has proved difficult to account
for in a well-behaved way is that the substitution of clock variables is
restricted to avoid identifying ``time streams'' within types: to be precise,
arbitrary substitutions $\brk{k'/k}$ are permitted in terms, but a substitution
in a \emph{type} is only permitted when it does not cause two distinct clocks
to be identified. To implement this restriction, a somewhat bizarre side
condition on the instantiation rule for $\forall k.A\brk{k}$ is needed, which
unfortunately appears to preclude the generalization of Atkey and McBride's clocks to the
dependently typed setting, \emph{pace} a worthy attempt by
M\o{}gelberg~\cite{mogelberg:2014} which was thwarted by the failure of the
substitution lemma.

Bizjak and M\o{}gelberg~\cite{bizjak-mogelberg:2015} subsequently resolved
these difficulties in 2015 by simply removing the restriction on clock
substitution entirely: to substantiate this simplified language, they construct
a \emph{fibered} presheaf model that supports \emph{diagonal substitutions} of clocks, which
they refer to as \DefEmph{clock synchronization}. With synchronization in
place, there were two remaining problems left unresolved by \opcit:

\begin{enumerate}

  \item the fibered character of the model caused coherence problems that impede the
    interpretation of the syntax of dependent type theory;

  \item also missing was the \DefEmph{clock irrelevance} principle, which
    should at a minimum ensure that the canonical map $\Mor{A}{\forall k.A}$ is
    an isomorphism for any type $A$ that doesn't depend on $k$.

\end{enumerate}

A solution to the coherence problem was found in 2017 by Sterling and
Harper~\cite{sterling-harper:2018:lics}, who employed the Grothendieck
construction to replace the fibered presheaf model by an equivalent
\emph{ordinary} presheaf model. Sterling and Harper also addressed clock
irrelevance in two steps: first, they interpreted the clock quantifier $\forall
k$ as an \emph{intersection type} in an internal realizability
model;\footnote{See also Bizjak and
Birkedal~\cite{bizjak-birkedal:2018:equilogical} for a different realizability
approach to multi-clock guarded recursion via generalized equilogical spaces.}
then they ensured that this intersection is non-trivial by \emph{forcing} the
proposition $\exists k. \top$ in the ambient topos. Around the same time,
Bizjak and M\o{}gelberg~\cite{bizjak-mogelberg:2020} returned to the clock
synchronization model with an analogous solution to the coherence problem and a
more general approach to clock irrelevance: the quantifier
$\forall k$ remains a dependent product, but they restrict the language to types that are
\emph{orthogonal} to the object of clock names. The solution of Bizjak and
M\o{}gelberg is more broadly applicable than that of Sterling and Harper
because intersection need not be meaningful in an arbitrary category whereas
products have a very simple universal property.

\subsection{Goals and structure of this paper}

Most models of single-clock synthetic guarded domain theory are given by
presheaf topoi; the models of multi-clock synthetic guarded domain theory are
also taken in presheaves, but of a different kind than the single-clock
version. Thus the work of
\cite{atkey-mcbride:2013,bizjak-mogelberg:2015,bizjak-mogelberg:2020,sterling-harper:2018:lics}
on multi-clock guarded recurison raises two questions concerning the
relationship between the existing models of single-clock and multi-clock
guarded recursion:

\begin{enumerate}

  \item Does the passage from single-clock to multi-clock topos models have a
    universal property?

  \item Can the multi-clock model be rephrased as a special
    case of the single-clock model of SGDT?

\end{enumerate}

In this paper we answer both questions positively. Each multi-clock topos can
be seen to be a \emph{partial product} or \emph{bagtopos}~\cite{vickers:1992,johnstone:1992,johnstone:1994} for a certain
cocartesian fibration of topoi applied at a given model of single-clock guarded
recursion as hinted by Sterling~\cite[\S2.2.6]{sterling:2021:thesis}; moreover we show that the model of synthetic guarded domain theory
in the multi-clock setting is an instance of the single-clock model generalized to the
\emph{relative Grothendieck topos theory} over a given elementary topos
$\SCat$. Thus we have contributed a completely modular toolkit for negotiating
the two orthogonal axes of variation in multi-clock synthetic guarded domain
theory: the properties of the object of clocks, and the properties of each
later modality $\Ltr\Sub{k}$.

\subsubsection*{Structure of this paper}

In {\cref{sec:topoi}}, we introduce the topos and category theory that
is needed for our technical development. As \emph{relative} topos theory plays
an important role in our work, we
pay special attention to it. In {\cref{sec:sgdt}}, we define
\emph{elementary axioms} for both single-clock and multi-clock synthetic
guarded domain theory in a topos, and we contribute a toolkit for constructing
and transforming models of both.

\begin{enumerate}

  \item In {\cref{sec:stability:psh,sec:stability:sh}} we show that synthetic
    guarded domain theory is stable under both presheaves and localization,
    hence any bounded geometric morphism into a topos model of SGDT lifts this
    model into its domain.

  \item In {\cref{sec:base-models}} we generalize the
    results of Birkedal~\etal~\cite{bmss:2011} by constructing models of SGDT
    in sheaves on frames with a well-founded basis \emph{over an arbitrary base
    topos} with a natural numbers object. This generalization requires a subtle change to the definition of
    well-founded poset, as well as new constructive proofs of the existence of
    term-level guarded fixed points.

\end{enumerate}

In {\cref{sec:single-clock}} we provide an explicit description of the
geometric theories that extant presheaf models of single-clock guarded
recursion classify. In {\cref{sec:multi-clock}} we give a general
construction of a multi-clock model from a single-clock model using the
\emph{bagtopoi} of Vickers~\cite{vickers:1992}: in particular, a multi-clock
topos classifies the theory of a ``bag'' or ``multi-set'' of points of the
corresponding single-clock topos. Our characterization thus provides a
geometrical universal property for multi-clock guarded recursion as a model
construction on topoi, \emph{and} as an explicit transformation of geometric
theories. Finally in {\cref{sec:lifting-to-the-bagtopos}}, we use these
new universal properties to give a new and more abstract proof that the
multi-clock topoi are in fact models of synthetic guarded domain theory in each
clock.

\subsection*{Acknowledgments}

Our thanks to Lars Birkedal for his encouragement and his comments on a draft
version of this paper. This work was supported by a Villum Investigator grant
(no.~25804), Center for Basic Research in Program Verification (CPV), from the
VILLUM Foundation.
\section{Geometric universes, topoi, and logoi}\label{sec:topoi}

Because the topic of the present paper is logico-geometric duality in guarded
domain theory, we are careful to distinguish the spatial aspects of topoi from
the logical ones in both our terminology and notations. Similar conventions are
employed by Vickers~\cite{vickers:1999}, Bunge and Funk~\cite{bunge-funk:2006},
and Anel and Joyal~\cite{anel-joyal:2021}.
\begin{definition}
  A \DefEmph{geometric universe}\footnote{Geometric universes in this sense are
  usually referred to as \emph{elementary topoi with natural numbers objects}.} is a cartesian closed
  category $\ECat$ that has finite limits, a subobject classifier $\Prop$, and a natural
  numbers object $\Nat$.
  A \DefEmph{morphism} $\Mor[F]{\ECat}{\FCat}$ of geometric universes is given by
  a left exact functor $\Mor[\Const{F}]{\ECat}{\FCat}$ equipped with a
  right adjoint $\Const{F}\dashv \Mor[\GSec{F}]{\FCat}{\ECat}$. A \DefEmph{2-morphism}
  $\Mor|2-cell|[\alpha]{F}{G}$ in $\brk{\ECat,\FCat}$ is given by a natural
  transformation $\Mor{\Const{F}}{\Const{G}}$.
\end{definition}

We will write $\GU$ for the meta-2-category of all geometric universes.

\begin{definition}
  A \DefEmph{left exact localization} of a geometric universe $\SCat$ is defined to
  be a morphism $\Mor[L]{\SCat}{\TCat}$ of geometric universes such that the
  right adjoint functor $\Mor[\GSec{L}]{\TCat}{\SCat}$ is fully faithful.
\end{definition}

\begin{definition}
  A \DefEmph{topos} over a geometric universe $\SCat$ is defined to be a geometric
  universe $\Sh{\XTop}$ equipped with a structure morphism
  $\Mor[\XTop]{\SCat}{\Sh{\XTop}}$ of geometric universes such that the gluing
  fibration $\FibMor{\GL{\XTop}=\Sh{\XTop}\downarrow\Const{\XTop}}{\SCat}$ has a small
  separator.  A \DefEmph{morphism} of $\SCat$-topoi $\Mor[f]{\XTop}{\YTop}$ is then a
  morphism in the pseudo-coslice $\SCat\downarrow\GU$,
  \ie a morphism $\Mor[\Sh{f}]{\Sh{\YTop}}{\Sh{\XTop}}$ of geometric universes
  equipped with an isomorphism $\Mor|2-cell|[\Coh{f}]{\XTop}{\YTop;\Sh{f}}$ in
  $\GU\brk{\SCat,\Sh{\XTop}}$ as depicted in the wiring diagram below:
  \[
    \begin{tikzpicture}
      \node[string/frame = 3cm and 3cm] (rect) at (0,0) {};
      \node[string/node] (phi) at (rect.center) {$\Coh{f}$};
      \draw[string/wire] (rect.north) -- (phi.north) node [above,at start] {$\XTop$};
      \draw[string/crooked wire,spath/save = bendl] (phi.west) -| (rect.240) node {$\YTop$};
      \draw[string/crooked wire,spath/save = bendr] (phi.east) -| (rect.300) node {$\Sh{f}$};
      \node[string/region label, between = phi.north west and rect.north west] {$\SCat$};
      \node[string/region label, between = phi.north east and rect.north east] {$\Sh{\XTop}$};
      \node[string/region label, between = phi.south and rect.south] {$\Sh{\YTop}$};
      \begin{pgfonlayer}{background}
        \fill[string/region 0] (rect.north) rectangle (rect.south west);
        \fill[string/region 2] (rect.north) rectangle (rect.south east);
        \fill[string/region 1] (spath cs:bendl 0) [spath/use={weld,bendl}] -- (spath cs:bendr 1) [spath/use={weld,reverse,bendr}];
      \end{pgfonlayer}
    \end{tikzpicture}
  \]

  We will write $f^*\dashv f_*$ for the adjunction
  $\Const{\Sh{f}}\dashv\GSec{\Sh{f}}$. A \DefEmph{2-morphism} $\Mor|2-cell|[\alpha]{f}{g}$ in
  $\brk{\XTop,\YTop}$ is defined to be a 2-morphism
  $\Mor|2-cell|[\Sh{\alpha}]{\Sh{f}}{\Sh{g}}$ such that the following wiring diagrams are equal (denote the same 2-cell):
  \[
    \begin{tikzpicture}[baseline=(rect.base)]
      \node[string/frame = 3.5cm and 3.5cm] (rect) at (0,0) {};
      \node[string/node] (phi/f) at ([yshift=.5cm]rect.center) {$\Coh{f}$};
      \node[string/node] (alpha) at ([yshift=1cm]rect.300) {$\Sh{\alpha}$};

      \draw[string/wire] (rect.north) -- (phi/f.north) node [above,at start] {$\XTop$};
      \draw[string/crooked wire,spath/save = bendl] (phi/f.west) -| (rect.240) node {$\YTop$};
      \draw[string/crooked wire,spath/save = bendr0] (phi/f.east) -| node [near end, right,inner sep=1pt] {\small$\Sh{f}$} (alpha.north);
      \draw[string/wire,spath/save = bendr1] (alpha.south) -- node {$\Sh{g}$} (rect.300);
      \node[string/region label, between = phi/f.north west and rect.north west] {$\SCat$};
      \node[string/region label, between = phi/f.north east and rect.north east] {$\Sh{\XTop}$};
      \node[string/region label, between = phi/f.south and rect.south] {$\Sh{\YTop}$};
      \begin{pgfonlayer}{background}
        \fill[string/region 0] (rect.north) rectangle (rect.south west);
        \fill[string/region 2] (rect.north) rectangle (rect.south east);
        \fill[string/region 1] (spath cs:bendl 0) [spath/use={weld,bendl}] -- (spath cs:bendr1 1) [spath/use={weld,reverse,bendr0}];
      \end{pgfonlayer}
    \end{tikzpicture}
    \qquad
    \begin{tikzpicture}[baseline=(rect.base)]
      \node[string/frame = 3cm and 3.5cm] (rect) at (0,0) {};
      \node[string/node] (phi) at (rect.center) {$\Coh{g}$};
      \draw[string/wire] (rect.north) -- (phi.north) node [at start,above] {$\XTop$};
      \draw[string/crooked wire,spath/save=bendl] (phi.west) -| (rect.240) node {$\YTop$};
      \draw[string/crooked wire,spath/save=bendr] (phi.east) -| (rect.300) node {$\Sh{g}$};
      \node[string/region label, between = phi.north west and rect.north west] {$\SCat$};
      \node[string/region label, between = phi.north east and rect.north east] {$\Sh{\XTop}$};
      \node[string/region label, between = phi.south and rect.south] {$\Sh{\YTop}$};
      \begin{pgfonlayer}{background}
        \fill[string/region 0] (rect.north) rectangle (rect.south west);
        \fill[string/region 2] (rect.north) rectangle (rect.south east);
        \fill[string/region 1] (spath cs:bendl 0) [spath/use={weld,bendl}] -- (spath cs:bendr 1) [spath/use={weld,reverse,bendr}];
      \end{pgfonlayer}
    \end{tikzpicture}
  \]
\end{definition}

We will write $\TOP{\SCat}$ for the meta-2-category of $\SCat$-topoi for a
geometric universe $\SCat$. We will write $\LOG{\SCat} =
\OpCat{\TOP{\SCat}}$ for the meta-2-category obtained by reversing the
1-cells but not the 2-cells; we refer to an object of $\LOG{\SCat}$ as an
$\SCat$-\emph{logos}. Given a topos $\XTop$, we may think of the dual logos
$\Sh{\XTop}$ as the category of $\SCat$-valued \emph{sheaves} on the space
$\XTop$.
Indeed, the \emph{relative} Giraud theorem states that the logos $\Sh{\XTop}$
can be equivalently presented as a left exact localization of a category of
internal diagrams $\brk{\mathbb{C},\SCat}$ for some internal category
$\mathbb{C}$ in $\SCat$.

\subsection{Geometric theories and classifying topoi}

The notion of \emph{geometric theory} elucidates the relationship between topoi and logoi.

\begin{quote}
  Each geometric theory $\TThy$ determines a \emph{classifying topos}
  $\ClTop{\TThy}$ whose points form the category of $\TThy$-models and
  $\TThy$-model homomorphisms; the dual logos $\Sh{\ClTop{\TThy}}$ is then the
  \emph{classifying category} or category of contexts and substitutions for the
  theory $\TThy$. It is also useful to think of $\Sh{\ClTop{\TThy}}$ as the
  universal extension of the geometric universe $\SCat$ by an indeterminate
  $\TThy$-model in the same way that the polynomial ring $A[x]$ is the free
  extension of a ring $A$ by an indeterminate element.
\end{quote}

There are several possible notions of geometric theory over $\SCat$; for the
sake of this paper, we choose a particularly syntactical one. Informally a
geometric theory over $\SCat$ is given by a collection of sorts $\sigma$ and predicates $\overline{x:\sigma}\mid \phi\prn{\vec{x}}$, together with a collection of sequents
$\overline{x:\sigma}\mid \phi\prn{\vec{x}}\vdash \psi\prn{\vec{x}}$ in which
$\phi,\psi$ are defined using $\brc{\Disj{I},\Conj{n},\exists,\exists!,=}$ where $I$
ranges over an object of $\SCat$. To be precise, all these collections are
parameterized in objects of $\SCat$, so the correct category of geometric theories
arises as a \emph{fibration} $\FibMor{\THY{\SCat}}{\SCat}$ whose fiber at $J\in
\SCat$ is the category of $J$-indexed families of geometric theories with
morphisms given by translations of sorts, operations, and derivable sequents.
In practice, all these notions are conveniently manipulated in the internal
type theory of $\SCat$.

\begin{construction}
  The \DefEmph{free finite product completion} $\FP{\mathbb{C}}$ of any
  internal $\SCat$-category $\mathbb{C}$ can be computed explicitly in the
  internal language of $\SCat$ as follows:
  \begin{enumerate}

    \item an object $\Psi\in \FP{\mathbb{C}}$ is a finite $\SCat$-cardinal\footnote{To be very
    clear, by a finite cardinal we mean an element of the natural numbers object of $\SCat$;
    a morphism of finite cardinals $\Mor{m}{n}$ is a function from $\mathbb{N}\Sub{<m}$ to
    $\mathbb{N}\Sub{<n}$.} $\vrt{\Psi}$ together with a ``type assignment'' $\partial_\Psi \in
    \mathbb{C}\Sup{\vrt{\Psi}}$,

    \item a morphism $\Mor[\psi]{\Phi}{\Psi}$ in $\FP{\mathbb{C}}$ is given by a renaming
      $\Mor[\vrt{\psi}]{\vrt{\Psi}}{\vrt{\Phi}}$ together with a morphism
      $\Mor[\partial_\psi]{\partial_\Phi\Sup{\vrt{\psi}}}{\partial_\Psi}$
      in the product category $\mathbb{C}\Sup{\vrt{\Psi}}$.

  \end{enumerate}

  We will write $\Mor[\gl{-}]{\mathbb{C}}{\FP{\mathbb{C}}}$ for the functor
  sending an object to the corresponding unary product.
\end{construction}

\begin{example}\label{ex:thy-obj}
  The \DefEmph{theory of an object} $\ThyObj$ over $\SCat$ has a single sort $\ClkSort$, no
  operations, and no axioms. To be more formal, the sorts of $\ThyObj$ are
  parameterized by the terminal object $\ObjTerm{\SCat}\in\SCat$ and the
  operations and axioms are parameterized by the initial object
  $\ObjInit{\SCat}\in\SCat$. Letting $\mathbb{O} = \FP{\TrmCat}$ be the free internal $\SCat$-category
  with finite products generated by a single object, then $\Sh{\ClTop{\ThyObj}}$
  is the category of $\SCat$-valued presheaves on $\mathbb{O}$.
\end{example}

\begin{example}
   The \DefEmph{theory of a pointed object} $\ThyElt$ over $\SCat$ has a single sort $\ClkSort$,
   a single constant $\Con{k}: \ClkSort$, and no axioms. Recalling $\mathbb{O}$ from Example~\ref{ex:thy-obj}, then
   $\Sh{\ClTop{\ThyElt}}$ is the category of $\SCat$-valued presheaves on
   $\mathbb{O} \downarrow \gl{*}$.
\end{example}

\subsection{Morphisms of topoi as relative topoi}

Let $\Mor[\gamma]{\ETop}{\BTop}$ be a morphism of $\SCat$-topoi. In other
words, we have morphisms $\Mor[\ETop]{\SCat}{\Sh{\ETop}}$,
$\Mor[\BTop]{\SCat}{\Sh{\BTop}}$, $\Mor[\Sh{\gamma}]{\Sh{\BTop}}{\Sh{\ETop}}$,
and an isomorphism $\Mor|2-cell|[\Coh{\gamma}]{\ETop}{\BTop;\Sh{\gamma}}$
as in the following wiring diagram:
\[
  \begin{tikzpicture}[baseline=(rect.south)]
    \node[string/frame = 3cm and 3cm] (rect) at (0,0) {};
    \node[string/node] (phi) at (rect.center) {$\Coh{\gamma}$};
    \draw[string/wire] (rect.north) -- (phi.north) node [at start, above] {$\ETop$};
    \draw[string/crooked wire,spath/save=bendl] (phi.west) -| (rect.240) node {$\BTop$};
    \draw[string/crooked wire,spath/save=bendr] (phi.east) -| (rect.300) node {$\Sh{\gamma}$};
    \node[string/region label, between = phi.north west and rect.north west] {$\SCat$};
    \node[string/region label, between = phi.north east and rect.north east] {$\Sh{\ETop}$};
    \node[string/region label, between = phi.south and rect.south] {$\Sh{\BTop}$};
    \begin{pgfonlayer}{background}
      \fill[string/region 0] (rect.north west) rectangle (rect.south);
      \fill[string/region 1] (rect.north east) rectangle (rect.south);
      \fill[string/region 2]
      (spath cs:bendl 0) [spath/use = {weld,bendl}]
      -- (spath cs:bendr 1) [spath/use = {reverse,weld,bendr}];
    \end{pgfonlayer}
  \end{tikzpicture}
\]

Forgetting the rest of the structure, the morphism
$\Mor[\Sh{\gamma}]{\Sh{\BTop}}{\Sh{\ETop}}$ of geometric universes \emph{also}
exhibits $\Sh{\ETop}$ as the geometric universe underlying a
$\Sh{\BTop}$-topos~\cite[Lemma~B3.1.10(ii)]{johnstone:2002}. In this scenario,
we shall write $\Mor[\ETop\Sub{\gamma}]{\Sh{\BTop}}{\Sh{\ETop} =
\Sh{\ETop\Sub{\gamma}}}$ for this $\Sh{\BTop}$-topos.
This perspective allows us to take any property $\mathfrak{P}$ of topoi, \eg local
connectedness, and rephrase it as a property of \emph{morphisms} of topoi by
viewing the morphism as a topos over a different base geometric universe:

\begin{convention}[Relative point of view]\label{conv:relative}
  A morphism $\Mor[\gamma]{\ETop}{\BTop}$ of $\SCat$-topoi is said to have
  property $\mathfrak{P}$ if $\ETop\Sub{\gamma}$ has property $\mathfrak{P}$ when viewed
  as a $\Sh{\BTop}$-topos.
\end{convention}

\subsection{Internal presheaves, algebraic topoi, algebraic morphisms}

Let $\mathbb{C}$ be an internal category in a geometric universe $\SCat$; we
may consider the category $\Psh[\SCat]{\mathbb{C}}$ of $\SCat$-valued
presheaves on $\mathbb{C}$. The constant presheaves functor
$\Mor[\PrTop{\mathbb{C}}]{\SCat}{\Psh[\SCat]{\mathbb{C}}}$ is then an
$\SCat$-topos with $\Sh{\PrTop{\mathbb{C}}} = \Psh[\SCat]{\mathbb{C}}$;
following the terminology of Vickers~\cite{vickers:1999} and
Johnstone~\cite{johnstone:1992} we will refer to any $\SCat$-topos equivalent
to one of the form $\PrTop{\mathbb{C}}$ as a \emph{algebraic
$\SCat$-topos}.\footnote{The ``algebraic topos'' terminology tracks an precise
analogy with algebraic dcpos discussed by Vickers~\cite{vickers:1999}.} Thus:

\begin{definition}[Algebraic topos]\label{def:algebraic-topos}
  An $\SCat$-topos $\XTop$ is called \emph{algebraic} when there exists an
  internal category $\mathbb{C}\in\SCat$ and an equivalence of $\SCat$-topoi
  $\Mor{\XTop}{\PrTop{\mathbb{C}}}$.

\end{definition}

Employing \cref{conv:relative} we generalize \cref{def:algebraic-topos} to
morphisms of topoi.\footnote{Not to be confused with the convention of
referring to the inverse image $\Mor[f^*]{\Sh{\YTop}}{\Sh{\XTop}}$ of a
morphism of topoi $\Mor[f]{\XTop}{\YTop}$ as an ``algebraic morphism'', which
we do not employ in this paper.}

\begin{definition}[Algebraic morphism]\label{def:algebraic-morphism}
  A morphism $\Mor[\beta]{\ETop}{\BTop}$
  of $\SCat$-topoi is likewise called \emph{algebraic} when
  $\ETop\Sub{\beta}$ is an algebraic $\Sh{\BTop}$-topos, \ie there exists an
  internal category $\mathbb{C}\in\Sh{\BTop}$ such that there exists an
  equivalence of $\Sh{\BTop}$-topoi $\Mor{\ETop_\beta}{\PrTop{\mathbb{C}}}$.
\end{definition}

In the scenario of \cref{def:algebraic-morphism}, we will speak of
$\Mor[\beta]{\ETop}{\BTop}$ as the algebraic morphism presented by the
$\Sh{\BTop}$-category $\mathbb{C}$.

\begin{observation}[{Johnstone~\cite[Lemma~B2.5.3]{johnstone:2002}}]\label{obs:algebraic-composition}
  If $\Mor{\ETop}{\BTop}$ is the algebraic morphism presented by an
  internal category $\mathbb{E}\in\Sh{\BTop}$ and $\Mor{\FTop}{\ETop}$ is the
  algebraic morphism presented by an internal category
  $\mathbb{O}\in\Sh{\ETop}\simeq \Psh[\Sh{\BTop}]{\mathbb{E}}$, then the
  composite $\Mor{\FTop}{\BTop}$ is the algebraic morphism presented by the
  internal category $\mathbb{E}\rtimes\mathbb{O}\in\Sh{\BTop}$ whose objects
  are given by pairs $\prn{u,v}$ with $u\in\mathbb{E}$ and $v\in\mathbb{O}u$ such that a
  morphism $\Mor{\prn{u,v}}{\prn{u',v'}}$ is given by a pair $\prn{f,g}$ where
  $\Mor[f]{u}{u'}$ and $\Mor[g]{v}{f^*v'}$.
\end{observation}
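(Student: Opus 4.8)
The plan is to reduce the statement to an equivalence of logoi and then check that this equivalence respects the structure morphisms from $\Sh{\BTop}$, so that it is in fact an equivalence of $\Sh{\BTop}$-topoi. Unwinding \cref{def:algebraic-morphism}, the hypotheses supply an equivalence of $\Sh{\BTop}$-topoi $\Sh{\ETop}\simeq\Psh[\Sh{\BTop}]{\mathbb{E}}$ and an equivalence of $\Sh{\ETop}$-topoi $\Sh{\FTop}\simeq\Psh[\Sh{\ETop}]{\mathbb{O}}$; transporting $\mathbb{O}$ along the former and viewing $\FTop$ as a $\Sh{\BTop}$-topos via the composite structure morphism, it then suffices to produce an equivalence $\Psh[\Sh{\BTop}]{\mathbb{E}\rtimes\mathbb{O}}\simeq\Psh[\Psh[\Sh{\BTop}]{\mathbb{E}}]{\mathbb{O}}$ that commutes, up to canonical isomorphism, with the constant-presheaves functors out of $\Sh{\BTop}$. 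Before that, however, I would record that $\mathbb{E}\rtimes\mathbb{O}$ really is an internal category in $\Sh{\BTop}$: its object of objects is the total space $\sum_{u\in\mathbb{E}}\mathbb{O}u$ of the object presheaf of $\mathbb{O}$ on $\mathbb{E}$, its object of morphisms is the pullback of the domain map $\mathbb{E}_1\to\mathbb{E}_0$ against the total space of the morphism presheaf of $\mathbb{O}$, identities are $(\mathrm{id}_u,\mathrm{id}_v)$, and composition is the one forced by the statement using the reindexing functors $f^*$ of $\mathbb{O}$; associativity and unitality are then finite-limit identities in $\Sh{\BTop}$ inherited from those of $\mathbb{E}$ and $\mathbb{O}$. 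Conceptually, $\mathbb{E}\rtimes\mathbb{O}$ is nothing but the internal Grothendieck construction of $\mathbb{O}$, viewing the object presheaf of $\mathbb{O}$ as an $\mathbb{E}$-indexed family of internal $\Sh{\BTop}$-categories $u\mapsto\mathbb{O}u$ with reindexing along each $f\colon u\to u'$.

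The heart of the argument is the equivalence $\Psh[\Sh{\BTop}]{\mathbb{E}\rtimes\mathbb{O}}\simeq\Psh[\Psh[\Sh{\BTop}]{\mathbb{E}}]{\mathbb{O}}$, which is exactly the internalization of the classical fact that a presheaf category over a Grothendieck construction $\int_{\mathbb{E}}\mathbb{O}$ is equivalent to the category of internal presheaves on $\mathbb{O}$ regarded as an internal category over the presheaf category on $\mathbb{E}$ --- this is the content of Johnstone~\cite[Lemma~B2.5.3]{johnstone:2002}. In one direction, a presheaf $P$ on $\mathbb{E}\rtimes\mathbb{O}$ restricts along each inclusion $v\mapsto(u,v)$ of $\mathbb{O}u$ into $\mathbb{E}\rtimes\mathbb{O}$ to a presheaf $P_u$ on the internal category $\mathbb{O}u$, and acting by the morphisms $(f,\mathrm{id})$ assembles the family $(P_u)_u$ into an internal presheaf on $\mathbb{O}$ over $\Psh[\Sh{\BTop}]{\mathbb{E}}$. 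In the other direction, from such data one sets $P(u,v)=P_u(v)$ and lets a morphism $(f,g)\colon(u,v)\to(u',v')$ act by composing the reindexing of $P_{u'}$ along $f$ with the $\mathbb{O}u$-action of $g$. These constructions are mutually quasi-inverse by the same bookkeeping that establishes the external statement. Finally I would observe that a constant presheaf on $\mathbb{E}\rtimes\mathbb{O}$ corresponds under this equivalence to a presheaf that is constant first along $\mathbb{E}$ and then along $\mathbb{O}$, so the equivalence commutes with the two constant-presheaves functors out of $\Sh{\BTop}$; hence it is an equivalence of $\Sh{\BTop}$-topoi and exhibits $\Mor{\FTop}{\BTop}$ as the algebraic morphism presented by $\mathbb{E}\rtimes\mathbb{O}$.

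I expect the only point requiring genuine care to be the unwinding of what an internal category $\mathbb{O}$ in the presheaf topos $\Psh[\Sh{\BTop}]{\mathbb{E}}$ actually amounts to: its object of objects is itself a presheaf on $\mathbb{E}$, so the expression $f^*v'$ occurring in the statement is not extra data but the value at $v'$ of the restriction map $\mathbb{O}u'\to\mathbb{O}u$ attached to $f\colon u\to u'$, and likewise the morphism presheaf of $\mathbb{O}$ supplies the reindexing on hom-objects that makes composition in $\mathbb{E}\rtimes\mathbb{O}$ associative. Once this dictionary between "internal category in $\Psh[\Sh{\BTop}]{\mathbb{E}}$" and "$\mathbb{E}$-indexed family of internal $\Sh{\BTop}$-categories" is made explicit, everything reduces to finite-limit calculation in $\Sh{\BTop}$ together with the cited lemma, with no remaining obstacle; in particular, the smallness side condition of \cref{def:algebraic-topos} needed to call $\PrTop{\mathbb{E}\rtimes\mathbb{O}}$ a $\Sh{\BTop}$-topos is automatic, since $\mathbb{E}\rtimes\mathbb{O}$ is a bona fide internal --- hence small --- category in $\Sh{\BTop}$.
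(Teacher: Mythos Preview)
Your proposal is correct, but there is nothing to compare it against: the paper does not prove this observation at all. It is stated as a citation of Johnstone~\cite[Lemma~B2.5.3]{johnstone:2002} and left at that, with no argument in the body or the appendix. What you have written is a faithful unpacking of precisely that lemma --- the internal Grothendieck construction of $\mathbb{O}$ over $\mathbb{E}$, the equivalence $\Psh[\Sh{\BTop}]{\mathbb{E}\rtimes\mathbb{O}}\simeq\Psh[\Psh[\Sh{\BTop}]{\mathbb{E}}]{\mathbb{O}}$, and the check that the constant-presheaf functors match --- so your proof and the paper's ``proof'' (i.e.\ the citation) point to the same content. Your dictionary between internal categories in $\Psh[\Sh{\BTop}]{\mathbb{E}}$ and $\mathbb{E}$-indexed families of internal $\Sh{\BTop}$-categories is the right way to make the statement precise, and your remark that smallness is automatic because $\mathbb{E}\rtimes\mathbb{O}$ is genuinely internal is a useful sanity check that the paper leaves implicit.
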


Thus algebraic morphisms are closed under composition, and moreover,
composition of algebraic morphisms corresponds to the \emph{Grothendieck
construction} for the internal categories that determine them.

\subsubsection{Slices and \'etale morphisms}

Let $\SCat$ be a geometric universe and fix an object $A\in\SCat$. It is
sometimes referred to as the ``fundamental theorem of topos theory'' that the
slice $\SCat\downarrow{}A$ is again a geometric universe; moreover, the pullback functor
$\Mor[A^*]{\SCat}{\SCat\downarrow A}$ can be seen to be the left adjoint part of a
morphism of geometric universes $\Disc{A} = \prn{A^*\dashv A_*}$.  Thus $\Disc{A}$
can be viewed as a \emph{discrete} $\SCat$-topos where $\Sh{\Disc{A}}=\SCat\downarrow {A}$ such
that the points of the $\SCat$-valued topos $\Disc{A}$ are exactly the elements of $A$.
Such a topos is usually referred to as \emph{\'etale}:

\begin{definition}
  An $\SCat$-topos $\ATop$ is called \emph{\'etale} when there exists an object
  $A\in\SCat$ and an equivalence of $\SCat$-topoi $\Mor{\ATop}{\Disc{A}}$.
\end{definition}

Via \cref{conv:relative} we generalize the notion of \'etale $\SCat$-topos to
morphisms between $\SCat$-topoi:

\begin{definition}
  A morphism $\Mor[p]{\ETop}{\BTop}$ of $\SCat$-topoi is called \emph{\'etale}
  when the $\Sh{\BTop}$-topos $\ETop_p$ is \'etale, \ie there exists a sheaf
  $B\in\Sh{\BTop}$ together with an equivalence of $\Sh{\BTop}$-topoi $\Mor{\ETop_p}{\Disc{B}}$.
\end{definition}

As any object $A\in\SCat$ determines a discrete internal $\SCat$-category
$\Con{el}\,A$, it is not difficult to see that any \'etale $\SCat$-topos is
\emph{also} algebraic in a canonical way: we have $\Disc{A} =
\PrTop{\Con{el}\,A}$.  There is furthermore an analogue to
\cref{obs:algebraic-composition} concerning the composition of \'etale
morphisms of topoi:

\begin{observation}
  If $\Mor{\ETop}{\BTop}$ is the \'etale morphism presented by a sheaf
  $E\in\Sh{\BTop}$ and $\Mor{\FTop}{\ETop}$ is the \'etale morphism presented
  by a sheaf $F\in\Sh{\ETop} \simeq \Sh{\BTop}\downarrow E$, then the composite
  $\Mor{\FTop}{\BTop}$ is the \'etale morphism presented by the dependent sum
  $\Sum{E}{F}\in\Sh{\BTop}$.
\end{observation}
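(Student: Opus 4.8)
The plan is to reduce the composition-of-étale-morphisms claim to the already-established composition law for algebraic morphisms (Observation, Johnstone~\cite[B2.5.3]{johnstone:2002}), exploiting the observation immediately preceding the statement that an étale topos $\Disc{A}$ is canonically the algebraic topos $\PrTop{\Con{el}\,A}$ presented by the discrete internal category $\Con{el}\,A$.

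First I would unfold the hypotheses via \cref{conv:relative}: $\Mor{\ETop}{\BTop}$ étale means $\ETop_{}$, viewed as a $\Sh{\BTop}$-topos, is equivalent to $\Disc{E}$ for a sheaf $E\in\Sh{\BTop}$, and likewise $\Mor{\FTop}{\ETop}$ étale means $\FTop$, viewed as a $\Sh{\ETop}$-topos, is equivalent to $\Disc{F}$ for some $F\in\Sh{\ETop}$. Under the canonical equivalence $\Sh{\ETop}\simeq\Psh[\Sh{\BTop}]{\Con{el}\,E}$ (which just recovers the standard fact $\Sh{\Disc{E}} = \Sh{\BTop}\downarrow E$, since presheaves on a discrete internal category are the same as the slice), the sheaf $F$ corresponds to an object of $\Psh[\Sh{\BTop}]{\Con{el}\,E}$. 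Now I invoke the algebraic composition law: the composite $\Mor{\FTop}{\BTop}$ is the algebraic $\Sh{\BTop}$-topos presented by the internal category $\Con{el}\,E\rtimes\Con{el}\,F$, whose objects are pairs $(u,v)$ with $u\in E$ and $v\in Fu$, and whose morphisms $(u,v)\to(u',v')$ are pairs $(f,g)$ with $\Mor[f]{u}{u'}$ and $\Mor[g]{v}{f^*v'}$.

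The key step is then to recognize this Grothendieck-construction category as again \emph{discrete}, namely $\Con{el}\,E\rtimes\Con{el}\,F \simeq \Con{el}\,\prn{\Sum{E}{F}}$: since $\Con{el}\,E$ and each fiber $\Con{el}\,F$ are discrete, the only morphisms $(u,v)\to(u',v')$ are those with $f = \mathrm{id}_u$ (forcing $u = u'$) and $g = \mathrm{id}_v$ (forcing $v = v'$), so the only arrows are identities, and the object-set $\Sum{E}{\prn{\lambda u.\, Fu}}$ is precisely the dependent sum $\Sum{E}{F}$ internal to $\Sh{\BTop}$. Hence $\Mor{\FTop}{\BTop}$ is presented by the discrete internal category $\Con{el}\,\prn{\Sum{E}{F}}$, which is exactly to say it is the étale morphism presented by $\Sum{E}{F}\in\Sh{\BTop}$.

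The main obstacle, I expect, is bookkeeping the relative point of view coherently: one must check that the equivalence $\Sh{\ETop}\simeq\Psh[\Sh{\BTop}]{\Con{el}\,E}$ used to transport $F$ agrees on the nose (up to the relevant canonical iso) with the equivalence $\Sh{\Disc{E}}=\Sh{\BTop}\downarrow E$, so that "the object of $\Psh[\Sh{\BTop}]{\Con{el}\,E}$ corresponding to $F$" really does have fibers $Fu$ and the dependent sum computed inside $\Sh{\BTop}$ matches $\Sum{E}{F}$ as a sheaf. This is routine but requires care because it sits at the interface of three different presentations (slice, internal presheaves, relative topos). Everything else — discreteness of the semidirect product, and the appeal to the algebraic composition law — is immediate from the cited results.
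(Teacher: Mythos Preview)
The paper states this observation without proof; it is evidently meant to be read as an immediate specialization of the preceding \cref{obs:algebraic-composition} via the remark that $\Disc{A} = \PrTop{\Con{el}\,A}$. Your proposal spells out exactly that specialization, and the argument is correct: the semidirect product $\Con{el}\,E\rtimes\Con{el}\,F$ of discrete internal categories is discrete on the dependent sum, so the composite is the \'etale morphism presented by $\Sum{E}{F}$.

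Your final paragraph about bookkeeping is accurate but perhaps overcautious: the identification $\Psh[\Sh{\BTop}]{\Con{el}\,E}\simeq\Sh{\BTop}\downarrow E$ is not merely ``up to canonical iso'' but essentially definitional (a presheaf on a discrete internal category \emph{is} a family over its object of objects), so the transport of $F$ and the computation of $\Sum{E}{F}$ line up without any real work. You could safely drop that caveat.
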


\subsection{Partial products of topoi}

We recall the theory of partial products from Johnstone~\cite{johnstone:2002}. Let $\KCat$ be a
finitely complete 2-category and let $\Mor[p]{E}{B}$ be an cocartesian
fibration in $\KCat$ in the sense of
\cite[Definition~B4.4.1]{johnstone:2002}.\footnote{\emph{Op.\ cit.}\ refers to
these as \emph{opfibrations}.} Given an object $A\in \KCat$, a \emph{partial
product cone} over $\prn{p,A}$ at stage $C\in\KCat$ is defined to be a diagram
of the following form in $\KCat$, which we write as a pair $\prn{u,\epsilon}$:
\[
  \begin{tikzpicture}[diagram]
    \SpliceDiagramSquare<sq/>{
      nw = u^*E,
      ne = E,
      sw = C,
      se = B,
      east = p,
      south = u,
      west = u^*p,
      north = q,
      nw/style = pullback,
    }
    \node (A) [left = of sq/nw] {$A$};
    \draw[->] (sq/nw) to node [above] {$\epsilon$} (A);
  \end{tikzpicture}
\]

Johnstone defines a morphism between partial product cones
$\Mor{\prn{u,\epsilon}}{\prn{u',\epsilon'}}$ to be a pair $\prn{\alpha,\beta}$
of 2-cells as depicted below, where $\Mor[\alpha_!]{u^*E}{u'^*E}$ is the
1-cell assigning cocartesian lifts along $\Mor|2-cell|[\alpha]{u}{u'}$:
\[
  \begin{tikzpicture}[baseline=(rect.base)]
    \node[string/frame = 2cm and 3cm] (rect) at (0,0) {};
    \node[string/node] (alpha) at (rect.center) {$\alpha$};
    \draw[string/wire] (rect.north) -- (alpha.north) node [above,at start] {$u$};
    \draw[string/wire] (alpha.south) -- (rect.south) node {$u'$};
    \node[string/region label, between = alpha.west and rect.west] {$C$};
    \node[string/region label, between = alpha.east and rect.east] {$B$};
    \begin{pgfonlayer}{background}
      \fill[string/region 0] (rect.north west) rectangle (rect.south);
      \fill[string/region 1] (rect.north east) rectangle (rect.south);
    \end{pgfonlayer}
  \end{tikzpicture}
  \qquad
  \begin{tikzpicture}[baseline=(rect.base)]
    \node[string/frame = 3cm and 3cm,spath/save=rectpath] (rect) at (0,0) {};
    \node[string/node] (beta) at (rect.center) {$\beta$};
    \draw[string/wire] (rect.north) -- (beta.north) node [above,at start] {$\epsilon$};
    \draw[string/crooked wire, spath/save = left bend] (beta.west) -| (rect.240) node {$\alpha_!$};
    \draw[string/crooked wire, spath/save = right bend] (beta.east) -| (rect.300) node {$\epsilon'$};
    \node[string/region label, between = beta.north west and rect.north west] {$u^*E$};
    \node[string/region label, between = beta.north east and rect.north east] {$A$};
    \node[string/region label, between = beta.south and rect.south] {$u'^*E$};
    \begin{pgfonlayer}{background}
      \fill[string/region 0] (rect.north west) rectangle (rect.south);
      \fill[string/region 1] (rect.north east) rectangle (rect.south);
      \fill[string/region 2]
        (spath cs:left bend 0) [spath/use = {weld,left bend}]
        -- (spath cs:right bend 1) [spath/use = {reverse,weld,right bend}];
    \end{pgfonlayer}
  \end{tikzpicture}
\]

We have a \emph{2-fibration}~\cite{buckley:2013} of partial product cones
$\FibMor{\PPCone{p}{A}}{\KCat}$ whose fiber at each $C\in \KCat$ is the
category of partial product cones for $\prn{p,A}$ with vertex $C$ as defined
above. Then the partial product of $\prn{p,A}$ is defined to be a representing
object $\PProd{p}{A}\in\KCat$ for the fibration
$\FibMor{\PPCone{p}{A}}{\KCat}$, if it exists. In other words, for any
$C\in\KCat$ the category of morphisms $\Mor{C}{\PProd{p}{A}}$ is equivalent to
the category of partial product cones for $\prn{p,A}$ with vertex $C$.
Johnstone~\cite{johnstone:2002} points out that the partial product can be written in
the ``internal language'' of the 2-category $\KCat$ as the polynomial expression
$\PProd{p}{A} = \Sum{b:B}\Prod{e:p\brk{b}}A$.

\section{Elementary synthetic guarded domain theory}\label{sec:sgdt}

In this section, we set down \emph{elementary} axioms for synthetic guarded
domain theory in a geometric universe $\SCat$. In
\cref{sec:stability:psh,sec:stability:sh} we study the stability of these
axioms under the two fundamental topos-theoretic constructions: presheaves and
left exact localization. In \cref{sec:base-models} we generalize the results of
Birkedal~\etal to construct a base model from a frame with a well-founded basis
over any base geometric universe.

\subsection{Elementary axioms for synthetic guarded domain theory}

\begin{definition}
  A \DefEmph{later modality structure} on $\SCat$ is given by a left exact
  endofunctor $\Mor[\Ltr]{\SCat}{\SCat}$ called the \emph{later modality}
  together with a natural transformation $\Mor[\Next]{\Idn{\SCat}}{\Ltr}$.
\end{definition}

\begin{definition}\label{def:well-pointed}
  Following the terminology of Kelly~\cite{kelly:1980} we refer to a later
  modality structure as \DefEmph{well-pointed} when the following identity of
  wiring diagrams holds:
  \[
    \begin{tikzpicture}[baseline=(rect.center)]
      \node[string/frame = 2.5cm and 2cm] (rect) at (0,0) {};
      \node[string/node,xshift=.5cm] (next) at (rect.center) {$\Next$};
      \draw[string/wire] (next.south) -- ([xshift=.5cm]rect.south) node {$\Ltr$};
      \draw[string/wire] ([xshift=-.5cm]rect.north) -- ([xshift=-.5cm]rect.south) node {$\Ltr$};
      \begin{pgfonlayer}{background}
        \fill[string/region 0] (rect.north west) rectangle (rect.south east);
      \end{pgfonlayer}
    \end{tikzpicture}
    \quad
    =
    \quad
    \begin{tikzpicture}[baseline=(rect.center)]
      \node[string/frame = 2.5cm and 2cm] (rect) at (0,0) {};
      \node[string/node,xshift=-.5cm] (next) at (rect.center) {$\Next$};
      \draw[string/wire] (next.south) -- ([xshift=-.5cm]rect.south) node {$\Ltr$};
      \draw[string/wire] ([xshift=.5cm]rect.north) -- ([xshift=.5cm]rect.south) node {$\Ltr$};
      \begin{pgfonlayer}{background}
        \fill[string/region 0] (rect.north west) rectangle (rect.south east);
      \end{pgfonlayer}
    \end{tikzpicture}
  \]
\end{definition}

\begin{scholium}
  When the later modality has a left adjoint ${\blacktriangleleft}\dashv\Ltr$,
  the well-pointedness condition of \cref{def:well-pointed} is equivalent to
  the \emph{tick irrelevance} property isolated by Mannaa, M\o{}gelberg, and
  Veltri~\cite{mannaa-mogelberg-veltri:2020}.
\end{scholium}

\begin{remark}
  As the later modality is left exact, it internalizes as a modality
  $\Mor[\Ltr]{\Omega}{\Omega}$ on the subobject classifier that preserves
  finite conjunctions. Likewise, the later modality internalizes as a
  \emph{fibered endofunctor} on the fundamental fibered category of $\SCat$ as
  in Birkedal~\etal~\cite{bmss:2011}, and can hence be used informally in the
  internal dependent type theory of $\SCat$.
\end{remark}

\begin{definition}
  A later modality structure is said to support
  \DefEmph{L\"ob induction} when the sequent $\phi : \Omega\mid
  \Ltr{\phi}\Rightarrow \phi\vdash \phi$ holds in the internal logic of
  $\SCat$.
  It is said to support \DefEmph{guarded recursive
  terms} when for any morphism $\Mor[f]{\Ltr{A}}{A}$ there exists a unique
  element $\Mor[f^\dagger]{\ObjTerm{\SCat}}{A}$ such that $f^\dagger = f^\dagger;\Next\Sub{A};f$.
\end{definition}

The following result follows from the \emph{unique choice} principle valid in any geometric universe.

\begin{restatable}{lemma}{LemGRIffLoeb}\label{lem:gr-iff-loeb}
  A well-pointed later modality structure supports guarded recursive terms if and only if it supports L\"ob induction.
\end{restatable}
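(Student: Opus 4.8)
The plan is to establish both implications, with the forward direction (guarded recursive terms $\Rightarrow$ L\"ob induction) being essentially formal and the reverse direction (L\"ob induction $\Rightarrow$ guarded recursive terms) requiring the unique choice principle as the paper hints. First I would fix a well-pointed later modality structure $\prn{\Ltr,\Next}$ on $\SCat$.

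For the direction from guarded recursive terms to L\"ob induction, I would work in the internal logic of $\SCat$ and argue as follows: suppose $\phi:\Omega$ satisfies $\Ltr\phi\Rightarrow\phi$. Since $\Ltr$ internalizes as a finite-meet-preserving modality on $\Omega$ (as noted in the remark), the map $\Ltr\phi\Rightarrow\phi$ is a morphism $\Ltr\brk{\phi}\to\brk{\phi}$ of subobjects of $\ObjTerm{\SCat}$, hence a morphism $f:\Ltr A\to A$ where $A = \brk{\phi}$ is a subterminal object. The guarded recursive terms principle then yields $f^\dagger:\ObjTerm{\SCat}\to A$, i.e.\ a proof that $\phi$ holds; this establishes the sequent $\Ltr\phi\Rightarrow\phi\vdash\phi$. (Well-pointedness is not strictly needed for this direction, but costs nothing.)

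For the converse, assume L\"ob induction and let $\Mor[f]{\Ltr A}{A}$ be arbitrary. I would first prove \emph{existence} of a fixed point by applying L\"ob induction to the predicate $\phi\prn{a} \coloneqq \prn{a = f\prn{\Next_A\prn{a}}}$ on $A$ — but this requires care, since we want a fixed point in $A$, not merely a proof that fixed points are inhabited under a $\Ltr$ hypothesis. The standard move is to apply L\"ob induction to the proposition $\exists a:A.\,a = f\prn{\Next_A\prn{a}}$: one shows $\Ltr\prn{\exists a.\,a=f\prn{\Next a}} \Rightarrow \exists a.\,a = f\prn{\Next a}$ using that $\Ltr$ preserves $\exists$ up to the point that $\Next$ lets one transport a ``later'' witness under $f$; concretely, from an element of $\Ltr A$ equipped with a ``later proof'' that its \ldots unfolding is a fixed point, one produces the actual fixed point $f$ applied to that element of $\Ltr A$. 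Then L\"ob induction discharges the hypothesis and gives $\exists a.\,a = f\prn{\Next_A a}$. For \emph{uniqueness}, if $a, a'$ are both fixed points, I would apply L\"ob induction to $\psi \coloneqq \prn{a = a'}$: from $\Ltr\prn{a=a'}$ we get $\Next_A a = \Next_A a'$ (since $\Ltr$ applied to the equality predicate, composed with $\Next$, identifies the images), whence $a = f\prn{\Next_A a} = f\prn{\Next_A a'} = a'$. Combining the internal statement $\exists! a:A.\, a = f\prn{\Next_A a}$ with the unique choice principle valid in any geometric universe yields the desired global element $\Mor[f^\dagger]{\ObjTerm{\SCat}}{A}$ with $f^\dagger = f^\dagger;\Next_A;f$.

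The main obstacle I anticipate is the existence half of the converse: making precise how L\"ob induction applied to $\exists a.\,a=f\prn{\Next_A a}$ actually manufactures a witness rather than circularly assuming one. The key internal manipulation is that $\Ltr$ commutes with $\exists$ sufficiently to turn ``$\Ltr$ of (there is a fixed point)'' into ``there is an element $u:\Ltr A$ such that later $u$ is (the $\Next$ of) a fixed point,'' and then $f\prn{u}:A$ is the fixed point we want — verifying $f\prn{u} = f\prn{\Next_A\prn{f\prn{u}}}$ reduces, using naturality of $\Next$ and the $\Ltr$-hypothesis, to the ``later'' equation we were handed. Here is where \emph{well-pointedness} of the later modality structure genuinely enters, ensuring the two ways of applying $\Next$ coincide so that this computation goes through. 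Once existence and uniqueness are internal theorems, the appeal to unique choice is routine.
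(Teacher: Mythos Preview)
Your overall strategy matches the paper's: the forward direction is immediate, and the converse proceeds by establishing $\exists! x:A.\,x=f\prn{\Next_A x}$ internally and then invoking unique choice. Your uniqueness argument by L\"ob induction on $a=a'$ is exactly right. The gap is in the existence half, precisely at the step you flag yourself: the passage from $\Ltr\prn{\exists a:A.\,a=f\prn{\Next_A a}}$ to ``there is $u:\Ltr A$ such that $\Ltr\brk{x\leftarrow u}\prn{x=f\prn{\Next_A x}}$.'' This is \emph{not} a general commutation of $\Ltr$ with $\exists$; a left exact endofunctor need not preserve images or epimorphisms. What makes the extraction go through is that the predicate in question is satisfied by at most one element: when $\phi$ is subsingleton-valued, the comprehension $\Compr{x}{\phi x}$ is subterminal and coincides with the truncation $\exists x.\phi x$, so $\Ltr$ applied to the latter already yields an element of $\Ltr\Compr{x}{\phi x}\hookrightarrow\Ltr A$. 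The paper isolates this as a separate observation (``guarded unique choice'') and, crucially, bundles uniqueness \emph{into} the predicate from the outset, taking $\phi x \coloneqq \prn{x = f\prn{\Next_A x}} \land \prn{\forall y.\,y=f\prn{\Next_A y}\Rightarrow y=x}$, so that the subsingleton hypothesis is available when the L\"ob step needs to extract $u:\Ltr A$.

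Your proof can be repaired with a small reordering: first prove uniqueness (your argument for this is self-contained and does not presuppose existence), and only then run the L\"ob induction for existence, invoking uniqueness to justify the extraction of $u:\Ltr A$. Equivalently, adopt the paper's packaging of the predicate. Either way, well-pointedness is then used exactly where you indicate, to identify $\Next_A\prn{fu}$ with $u$ under the delayed substitution when verifying that $fu$ is a fixed point.
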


\begin{scholium}
  As a consequence of \cref{lem:gr-iff-loeb}, it is rarely necessary to verify
  the guarded recursive terms property, which is always more complex to check
  than L\"ob induction. In fact, \cref{lem:gr-iff-loeb} is an important
  ingredient to our constructivization of the results of
  Birkedal~\etal~\cite{bmss:2011} in \cref{sec:base-models}: it allows us to
  sidestep the very technical and non-constructive proof of Lemma~8.13 in \opcit.
\end{scholium}

We do not here consider algebraic compactness conditions on $\SCat$ with
respect to contractive functors, although these usually play an important role
in the solution of domain equations in synthetic guarded domain
theory~\cite{bmss:2011}.  Instead we take the point of view of Birkedal and
M\o{}gelberg~\cite{birkedal-mogelberg:2013} and advocate solving domain
equations internally to $\SCat$ using term-level guarded recursion on universe
objects. Thus we adopt the following elementary
definition of a model of synthetic guarded domain theory.

\begin{definition}\label{def:elementary-sgdt}
  An \DefEmph{elementary geometric model} of synthetic guarded domain theory is
  given by a geometric universe $\SCat$ equipped with a well-pointed later
  modality $\prn{\Ltr,\Next}$ that supports L\"ob induction.
\end{definition}

Every geometric universe $\SCat$ carries a trivial model of synthetic guarded
domain theory where $\Ltr{A} = \ObjTerm{\SCat}$. It is therefore important to
distinguish non-trivial models in order to implement \emph{adequate}
denotational semantics; the following definition is one possible restraint on
the later modality:

\begin{definition}
  Let $\SCat$ be a geometric universe equipped with a later modality structure
  $\prn{\Ltr,\Next}$, and let $\Mor[S]{\UCat}{\SCat}$ be a morphism of
  geometric universes.  We say that $\prn{\Ltr,\Next}$ is \DefEmph{globally
  adequate} relative to $\Mor[S]{\UCat}{\SCat}$ when the following 2-cell is an
  isomorphism, \ie we have a canonical isomorphism $\Con{force} :
  \GSec{S}\Ltr{\mathbb{N}} \cong \GSec{S}\mathbb{N}$ where $\mathbb{N}$ is the
  natural numbers object of $\SCat$:
  \[
    \begin{tikzpicture}[baseline = (rect.center)]
      \node[string/frame = 4cm and 2cm] (rect) at (0,0) {};
      \node[string/node] (next) at (rect.center) {$\Next$};
      \draw[string/wire] (next) -- (rect.south) node {$\Ltr$};
      \draw[string/wire,spath/save=nat] ([xshift=-1cm]rect.north) -- ([xshift=-1cm]rect.south) node {$\mathbb{N}$};
      \draw[string/wire,spath/save=gsec] ([xshift=1cm]rect.north) -- ([xshift=1cm]rect.south) node {$\GSec{S}$};
      \node[string/region label, between = spath cs:nat .5 and rect.west] {$\mathbf{1}$};
      \node[string/region label, between = next.north and rect.north] {$\SCat$};
      \node[string/region label, between = spath cs:gsec .5 and rect.east] {$\UCat$};
      \begin{pgfonlayer}{background}
        \fill[string/region 0] (rect.north west) rectangle (spath cs:nat 1);
        \fill[string/region 1] (spath cs:nat 0) rectangle (spath cs:gsec 1);
        \fill[string/region 2] (spath cs:gsec 0) rectangle (rect.south east);
      \end{pgfonlayer}
    \end{tikzpicture}
  \]
\end{definition}

\subsection{Elementary axioms for multi-clock synthetic guarded domain theory}

The multi-clock variants of synthetic guarded domain theory are also
accommodated under \cref{def:elementary-sgdt}; indeed, we may define an
elementary geometric model of \DefEmph{multi-clock synthetic guarded domain
theory} to be a geometric universe $\SCat$ equipped with an object $K\in\SCat$
and an elementary geometric model of synthetic guarded domain theory in the
slice $\SCat\downarrow K$. In this scenario, $K$ is the object of clocks and
clock quantification is implemented by the dependent product functor
$\Mor[K_*]{{\SCat}\downarrow{K}}{\SCat}$.

\subsection{Stability under presheaves}\label{sec:stability:psh}

Let $\mathbb{C}$ be an internal category in a geometric universe $\SCat$, \ie a
small category over $\SCat$. Supposing in addition that $\SCat$ is an
elementary geometric model of synthetic guarded domain theory, we may define a
later modality structure pointwise on the geometric universe
$\Sh{\PrTop{\mathbb{C}}}$ of internal presheaves.  In particular, we define
$\Mor[\Ltr\Sup{\mathbb{C}}]{\Sh{\PrTop{\mathbb{C}}}}{\Sh{\PrTop{\mathbb{C}}}}$ to
take an internal presheaf $E$ to $c\mapsto \Ltr E_c$ in the internal language of
$\SCat$; likewise the natural transformation
$\Mor[\Next\Sup{\mathbb{C}}]{\Idn{\Sh{\PrTop{\mathbb{C}}}}}{\Ltr\Sup{\mathbb{C}}}$ is given
pointwise.
The following \cref{lem:pw-global-adequacy} is verified by rewriting in the
pictorial language of wiring diagrams, using the fact that when $\mathbb{C}$
has a terminal object, the global sections functor
$\Mor[\GSec{\PrTop{\mathbb{C}}}]{\Sh{\PrTop{\mathbb{C}}}}{\SCat}$ is
$\SCat$-cocontinuous and hence preserves the natural numbers object.

\begin{restatable}[Global adequacy in presheaves]{lemma}{LemPshGlobalAdequacy}\label{lem:pw-global-adequacy}
  If the internal category $\mathbb{C}$ has a terminal object and
  $\prn{\Ltr,\Next}$ is globally adequate relative to $\Mor[S]{\UCat}{\SCat}$,
  then $\prn{\Ltr\Sup{\mathbb{C}},\Next\Sup{\mathbb{C}}}$ is globally adequate
  relative to the composite map
  $\Mor[S;\PrTop{\mathbb{C}}]{\UCat}{\Sh{\PrTop{\mathbb{C}}}}$.
\end{restatable}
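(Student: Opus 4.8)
The plan is to reduce global adequacy of the pointwise later modality on $\Sh{\PrTop{\mathbb{C}}}$ to global adequacy of $(\Ltr,\Next)$ on $\SCat$ itself, transported along the global sections functor. The crucial point is that, because $\mathbb{C}$ has a terminal object $\ObjTerm{\mathbb{C}}$, the global sections of an internal presheaf on $\mathbb{C}$ are computed by evaluation at $\ObjTerm{\mathbb{C}}$; thus $\Mor[\GSec{\PrTop{\mathbb{C}}}]{\Sh{\PrTop{\mathbb{C}}}}{\SCat}$ is canonically isomorphic to $E\mapsto E_{\ObjTerm{\mathbb{C}}}$ and in particular is $\SCat$-cocontinuous, so it preserves the natural numbers object and the canonical comparison $\GSec{\PrTop{\mathbb{C}}}\mathbb{N}\Sup{\mathbb{C}}\cong\mathbb{N}$ is invertible, writing $\mathbb{N}\Sup{\mathbb{C}}$ for the natural numbers object of $\Sh{\PrTop{\mathbb{C}}}$.

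First I would record that, since $\Ltr\Sup{\mathbb{C}}$ and $\Next\Sup{\mathbb{C}}$ are defined pointwise, evaluation at $\ObjTerm{\mathbb{C}}$ transports them back to $\Ltr$ and $\Next$: there is a canonical isomorphism $\GSec{\PrTop{\mathbb{C}}}\circ\Ltr\Sup{\mathbb{C}}\cong\Ltr\circ\GSec{\PrTop{\mathbb{C}}}$ under which the whiskering of $\Next\Sup{\mathbb{C}}$ by $\GSec{\PrTop{\mathbb{C}}}$ is identified with the whiskering of $\Next$ by $\GSec{\PrTop{\mathbb{C}}}$. Combining this with $\GSec{\PrTop{\mathbb{C}}}\mathbb{N}\Sup{\mathbb{C}}\cong\mathbb{N}$, the functor $\GSec{\PrTop{\mathbb{C}}}$ carries the component of $\Next\Sup{\mathbb{C}}$ at $\mathbb{N}\Sup{\mathbb{C}}$ to the component of $\Next$ at $\mathbb{N}$, up to canonical isomorphism. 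Whiskering once more by $\GSec{S}$ and invoking the hypothesis that $(\Ltr,\Next)$ is globally adequate relative to $S\colon\UCat\to\SCat$ --- that is, that $\GSec{S}$ inverts $\Next\Sub{\mathbb{N}}$ --- I conclude that the direct-image part $\GSec{S}\circ\GSec{\PrTop{\mathbb{C}}}$ of $S;\PrTop{\mathbb{C}}$ inverts the component of $\Next\Sup{\mathbb{C}}$ at $\mathbb{N}\Sup{\mathbb{C}}$, which is exactly global adequacy of $(\Ltr\Sup{\mathbb{C}},\Next\Sup{\mathbb{C}})$ relative to $S;\PrTop{\mathbb{C}}$.

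The main obstacle is a matter of coherence bookkeeping: one must check that the isomorphism $\GSec{\PrTop{\mathbb{C}}}\mathbb{N}\Sup{\mathbb{C}}\cong\mathbb{N}$ obtained from cocontinuity is compatible with the $\Next$-morphisms on either side, so that the 2-cell assembled from the steps above is literally the canonical one named $\Con{force}$ in the definition of global adequacy, not merely some isomorphism between its source and target. I expect this to be discharged by rewriting in the pictorial calculus of wiring diagrams, using only naturality and the pointwise definitions of $\Ltr\Sup{\mathbb{C}}$ and $\Next\Sup{\mathbb{C}}$; no genuinely new ingredient is needed once $\GSec{\PrTop{\mathbb{C}}}$ has been identified with evaluation at $\ObjTerm{\mathbb{C}}$.
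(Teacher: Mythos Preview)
Your proposal is correct and follows essentially the same route as the paper: the paper isolates the isomorphism $\GSec{\PrTop{\mathbb{C}}}\circ\Ltr\Sup{\mathbb{C}}\cong\Ltr\circ\GSec{\PrTop{\mathbb{C}}}$ as a separate observation (the ``distribution of $\Ltr$ over the limit of a given presheaf'', which is just your evaluation-at-the-terminal-object argument), then pastes this together with the NNO-preservation isomorphism to reduce to global adequacy in $\SCat$, carrying out the coherence check you anticipate via wiring-diagram rewriting. The only cosmetic difference is that the paper phrases the commutation of $\Ltr$ past $\GSec{\PrTop{\mathbb{C}}}$ in terms of the general limit formula for global sections rather than directly as evaluation at $\ObjTerm{\mathbb{C}}$, but with a terminal object present these are the same.
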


\begin{theorem}[Stability under presheaves]\label{thm:stability-under-diagrams}
  The pointwise later modality structure
  $\prn{\Ltr\Sup{\mathbb{C}},\Next\Sup{\mathbb{C}}}$ on
  $\Sh{\PrTop{\mathbb{C}}}$ is well-pointed and supports L\"ob induction.  Hence
  the category of diagrams $\Sh{\PrTop{\mathbb{C}}}$ is an elementary geometric
  model of synthetic guarded domain theory.
\end{theorem}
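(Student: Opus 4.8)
The plan is to verify the two required properties — well-pointedness and support for L\"ob induction — separately, each time reducing a statement about the pointwise later modality on $\Sh{\PrTop{\mathbb{C}}}$ to the corresponding statement already assumed for $\prn{\Ltr,\Next}$ on $\SCat$. The overarching principle is that both properties are expressed by equalities/entailments in the internal logic, and since colimits, finite limits, the subobject classifier, and internal entailment in $\Sh{\PrTop{\mathbb{C}}}$ are all computed pointwise in $\SCat$, each condition holds in $\Sh{\PrTop{\mathbb{C}}}$ iff it holds componentwise in $\SCat$, where it holds by hypothesis.

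First I would treat well-pointedness. The defining condition of \cref{def:well-pointed} is an equality of two natural transformations $\Ltr\Ltr \Rightarrow \Ltr\Ltr$ built from $\Next$; concretely, it says $\Ltr(\Next_A) = \Next_{\Ltr A}$ for every object $A$. For $\Ltr\Sup{\mathbb{C}}$ and $\Next\Sup{\mathbb{C}}$, evaluated at a presheaf $E$ and an object $c$ of $\mathbb{C}$, both sides reduce — by the pointwise definitions — to the corresponding morphisms built from $\Next$ applied to $E_c$ in $\SCat$, which agree because $\prn{\Ltr,\Next}$ is well-pointed. It is cleanest to phrase this as rewriting in the wiring-diagram calculus exactly as in the statement of \cref{def:well-pointed}, observing that the pointwise evaluation functors $E \mapsto E_c$ are strict morphisms of the relevant structure and jointly conservative.

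Next I would treat L\"ob induction, i.e.\ the sequent $\phi : \Omega \mid \Ltr\Sup{\mathbb{C}}\phi \Rightarrow \phi \vdash \phi$ in the internal logic of $\Sh{\PrTop{\mathbb{C}}}$. Here the key observation is that the subobject classifier of $\Sh{\PrTop{\mathbb{C}}}$ is \emph{not} pointwise the subobject classifier of $\SCat$, so a little care is needed: a subobject of an internal presheaf corresponds to a \emph{sieve-valued} predicate rather than an $\Omega$-valued one. The standard route is to unfold the semantics of the L\"ob sequent at a fixed object $c\in\mathbb{C}$ and a fixed generalized element, reducing it to an instance of L\"ob induction in $\SCat$ together with a well-founded recursion over the (internally indexed) morphisms into $c$; since $\Ltr\Sup{\mathbb{C}}$ is defined pointwise, the "later" quantifier commutes appropriately with this analysis. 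Alternatively — and I suspect this is the slicker path the authors take — one uses \cref{lem:gr-iff-loeb}: it suffices to produce unique guarded fixed points for maps $\Mor[f]{\Ltr\Sup{\mathbb{C}}A}{A}$ in $\Sh{\PrTop{\mathbb{C}}}$, and such a fixed point can be constructed componentwise from the guarded fixed points in $\SCat$, with naturality in $\mathbb{C}$ following from uniqueness. Either way, the conclusion that $\Sh{\PrTop{\mathbb{C}}}$ is a geometric universe is immediate (it is a category of internal diagrams, hence a topos with a natural numbers object), so the final sentence follows by assembling well-pointedness, L\"ob induction, and \cref{def:elementary-sgdt}.

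The main obstacle is the L\"ob induction step, precisely because $\Omega\Sub{\Sh{\PrTop{\mathbb{C}}}}$ is a nontrivial presheaf of sieves rather than a pointwise copy of $\Omega\Sub{\SCat}$; one must check that the pointwise $\Ltr$ interacts correctly with the restriction maps of $\Omega\Sub{\Sh{\PrTop{\mathbb{C}}}}$ and that the induction "threads through" the $\mathbb{C}$-indexing without needing any well-foundedness on $\mathbb{C}$ itself. Routing through \cref{lem:gr-iff-loeb} and building term-level guarded fixed points pointwise is the cleanest way to finesse this, since uniqueness of fixed points in each fiber automatically supplies the naturality squares, avoiding any explicit manipulation of sieves.
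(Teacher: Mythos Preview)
The paper states this theorem without proof, treating it as immediate; there is no argument to compare against. Your proposal is more than adequate, and the route you ultimately settle on—passing through \cref{lem:gr-iff-loeb} and constructing guarded fixed points componentwise, with naturality forced by uniqueness—is correct and complete. Well-pointedness of $\prn{\Ltr\Sup{\mathbb{C}},\Next\Sup{\mathbb{C}}}$ is exactly the pointwise check you describe, so both hypotheses of \cref{lem:gr-iff-loeb} are available in $\Sh{\PrTop{\mathbb{C}}}$ once you have them in $\SCat$.

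One correction to your discussion of the direct route: there is no ``well-founded recursion over the morphisms into $c$'' involved, and you should not expect one, since $\mathbb{C}$ carries no order. The direct argument is actually simpler than you fear. Unfolding the L\"ob sequent at stage $c$ for a subobject $P\hookrightarrow X$, the only subtlety is that Heyting implication in presheaves is not pointwise; but it is always \emph{contained in} the pointwise implication, because restricting the Kripke--Joyal universal quantifier over $\Mor{c'}{c}$ to the identity morphism gives
\[
  \prn{\Ltr\Sup{\mathbb{C}} P \Rightarrow_{\Sh{\PrTop{\mathbb{C}}}} P}_c
  \;\leq\;
  \prn{\Next\Sub{X_c}^*\Ltr P_c \Rightarrow_{\SCat} P_c}
  \;\leq\; P_c,
\]
the last inequality being L\"ob in $\SCat$. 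So the ``threading through the $\mathbb{C}$-indexing'' that worried you is a one-line monotonicity observation, not an induction. Either argument suffices; your fixed-point argument has the virtue of avoiding any explicit Kripke--Joyal unfolding.
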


\subsection{Stability under left exact localization}\label{sec:stability:sh}

\begin{construction}[Localized later modality]\label{con:localized-later-modality}
  Let $\Mor[L]{\SCat}{\TCat}$ be a left exact localization, and let $\prn{\Ltr,\Next}$ be a later-modality structure on $\SCat$.
  We may define a canonical later-modality structure $\prn{\Ltr_L,\Next_L}$ on $\TCat$ by conjugating
  with the adjunction $\Const{L}\dashv \GSec{L}$. We define $\Mor[\Ltr_L]{\TCat}{\TCat}$ to be the composite functor
  $\GSec{L};\Ltr;\Const{L}$ and we define $\Mor[\Next_L]{\Idn{\TCat}}{\Ltr_L}$ to
  be the natural transformation depicted in the following wiring diagram, in
  which $\Inv{\epsilon}$ is the inverse to the counit
  $\Mor[\epsilon]{\GSec{L};\Const{L}}{\Idn{\ECat}}$ of the adjunction $\Const{L}\dashv\GSec{L}$.
  \[
    \begin{tikzpicture}[baseline = (rect.center)]
      \node[string/frame = 1.5cm and 3cm] (rect) at (0,0) {};
      \node[string/node] (next) at (rect.center) {$\Next_L$};
      \draw[string/wire] (next) -- (rect.south) node {$\Ltr_L$};
      \node[string/region label, between = next.north and rect.north] {$\TCat$};
      \begin{pgfonlayer}{background}
        \fill[string/region 0] (rect.north west) rectangle (rect.south east);
      \end{pgfonlayer}
    \end{tikzpicture}
    \quad
    \coloneqq
    \quad
    \begin{tikzpicture}[baseline = (rect.center)]
      \node[string/frame = 3.5cm and 3cm] (rect) at (0,0) {};
      \node[string/node] (eps) at ([yshift=-.8cm]rect.north) {\small$\Inv{\epsilon}$};
      \node[string/node, between = eps.south and rect.south] (nu) {$\Next$};
      \node[string/region label, below right = .3cm of rect.north west] {$\TCat$};
      \node[string/region label, between = eps.south and nu.north] {$\SCat$};

      \draw[string/crooked wire,spath/save = bend0] (eps.west) -| (rect.240) node {$\GSec{L}$};
      \draw[string/crooked wire,spath/save = bend1] (eps.east) -| (rect.300) node {$\Const{L}$};
      \draw[string/wire] (nu.south) -- (rect.south) node {$\Ltr$} ;
      \begin{pgfonlayer}{background}
        \fill[string/region 0] (rect.north west) rectangle (rect.south east);
        \fill[string/region 1] (spath cs:bend0 0) [spath/use = {weld,bend0}] -- (spath cs:bend1 1) [spath/use = {weld,reverse,bend1}];
      \end{pgfonlayer}
    \end{tikzpicture}
  \]
\end{construction}

The following can be proved pictorially in the language of wiring diagrams; see \cref{appendix:stability} for details.

\begin{restatable}{lemma}{LemLocalizedLaterWellPointed}\label{lem:localized-later-modality-well-pointed}
  If $\prn{\Ltr,\Next}$ is a well-pointed later modality structure on $\SCat$
  and $\Mor[L]{\SCat}{\TCat}$ is a left exact localization, then the later
  modality structure $\prn{\Ltr_L,\Next_L}$ defined in
  \cref{con:localized-later-modality} is well-pointed.
\end{restatable}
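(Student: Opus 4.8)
The plan is to verify the well-pointedness equation for $(\Ltr_L, \Next_L)$ directly in the graphical calculus of wiring diagrams, reducing it to the well-pointedness of $(\Ltr, \Next)$ on $\SCat$ together with the zig-zag identities for the adjunction $\Const{L} \dashv \GSec{L}$ and the fact that $\GSec{L}$ is fully faithful (equivalently, that the counit $\epsilon$ is invertible, which is exactly what makes $\Inv{\epsilon}$ available in Construction \ref{con:localized-later-modality}). Recall that $\Ltr_L = \GSec{L};\Ltr;\Const{L}$ and $\Next_L$ is the composite whiskering $\Inv{\epsilon}$ into $\Next$ as drawn in the construction. The claim to be proved is that $\Next_L \Sub{\Ltr_L}$ (post-whiskering $\Next_L$ with $\Ltr_L$) equals $\Ltr_L(\Next_L)$ (pre-whiskering $\Ltr_L$ with $\Next_L$) as 2-cells $\Ltr_L \Rightarrow \Ltr_L\Ltr_L$.

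First I would expand both sides of the desired identity by unfolding $\Next_L$ into its definition $\Inv{\epsilon}$-then-$\Next$, producing on each side a diagram built from $\GSec{L}$, $\Const{L}$, $\Ltr$, the unit/counit of the adjunction, and two copies of $\Next$. Next I would push the adjunction data around: on the side $\Ltr_L(\Next_L)$ the inner $\Next_L$ sits underneath an outer $\GSec{L};\Ltr;\Const{L}$, so I can use the interchange law to slide the $\Const{L}\GSec{L}$ pair adjacent to the $\Ltr$ in the outer layer and cancel it against $\Inv\epsilon$ (or $\epsilon$) via a zig-zag identity; similarly on the side $\Next_L\Sub{\Ltr_L}$ the outer $\Next_L$'s $\Inv\epsilon$ can be absorbed. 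The goal is to bring both diagrams into the common normal form in which we have a single $\GSec{L}$ on the far left, a single $\Const{L}$ on the far right, the composite $\Ltr$-of-$\Ltr$ at the bottom, and in the middle precisely the expression $\Next\Sub{\Ltr}$ on one side versus $\Ltr(\Next)$ on the other — at which point well-pointedness of $(\Ltr,\Next)$ on $\SCat$ (Definition \ref{def:well-pointed}) closes the gap.

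The main obstacle is bookkeeping rather than conceptual: one must be careful that the two occurrences of $\Inv\epsilon$ appearing when $\Next_L$ is used twice really do cancel correctly against the single $\Const{L}\GSec{L}$ that materializes between the two $\Ltr$-layers of $\Ltr_L\Ltr_L = \GSec{L};\Ltr;\Const{L};\GSec{L};\Ltr;\Const{L}$. This is where full faithfulness of $\GSec{L}$ is essential — it guarantees $\epsilon$ is an isomorphism so that the ``wrong-way'' cancellations are legitimate — and it is also where the naturality of $\Next$ (hence its compatibility with the left-exact functor $\Ltr$, which $\Ltr$ being left exact and $\Next$ being a natural transformation of endofunctors makes automatic) gets used to commute $\Next$ past the intervening $\GSec{L};\Const{L}$. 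Once the cancellations are set up in the right order, both sides collapse to the same diagram and the lemma follows. Full details of the diagram chase are deferred to \cref{appendix:stability}.
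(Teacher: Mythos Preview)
Your overall plan --- unfold, use the adjunction laws, apply well-pointedness of $(\Ltr,\Next)$, fold --- is the same as the paper's, but the specific manipulation you describe does not go through, and there is a counting error that suggests a misconception about the identity being proved.

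First the counting: each side of the well-pointedness equation $\Next_L\cdot\Ltr_L = \Ltr_L\cdot\Next_L$ contains exactly \emph{one} occurrence of $\Next_L$, hence after unfolding exactly one $\Inv{\epsilon}$ and one $\Next$. There is no place where ``$\Next_L$ is used twice''.

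More seriously, your ``normal form'' cannot be reached. In $\Ltr_L\Ltr_L = \GSec{L};\Ltr;\Const{L};\GSec{L};\Ltr;\Const{L}$ the pair sitting between the two $\Ltr$-layers is $\Const{L};\GSec{L}$, which in the adjunction $\Const{L}\dashv\GSec{L}$ is the \emph{codomain of the unit} $\eta$, not the domain of the counit $\epsilon$. The unit is not invertible (only $\epsilon$ is, by full faithfulness of $\GSec{L}$), so $\Ltr_L\Ltr_L$ is not isomorphic to $\GSec{L};\Ltr;\Ltr;\Const{L}$ and there is nothing against which the single $\Inv{\epsilon}$ can ``cancel''. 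Your appeal to naturality of $\Next$ to ``commute $\Next$ past the intervening $\GSec{L};\Const{L}$'' does not help either: naturality slides $\Next$ past \emph{morphisms}, not past functors, and the two composites $\Next\cdot(\Const{L};\GSec{L})$ and $(\Const{L};\GSec{L})\cdot\Next$ do not even have the same codomain.

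The paper's actual mechanism is different and worth noting: from the triangle identities together with invertibility of $\epsilon$ one gets $\Inv{\epsilon}\cdot\GSec{L} = \GSec{L}\cdot\eta$ and $\Const{L}\cdot\eta = \Inv{\epsilon}\cdot\Const{L}$. These let the $\Inv{\epsilon}$-bubble \emph{slide through} the adjacent $\GSec{L}$ strand of the whiskered $\Ltr_L$, turning into an $\eta$-bubble on the other side; then well-pointedness of $(\Ltr,\Next)$ swaps $\Next$ across $\Ltr$; then the second triangle identity slides the $\eta$-bubble through $\Const{L}$, turning it back into $\Inv{\epsilon}$. No cancellation occurs and the boundary $\Ltr_L\Rightarrow\Ltr_L\Ltr_L$ is preserved at every step.
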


\cref{thm:stability-under-localization} follows nearly immediately from an
internal logic argument, using the closure modality associated to any left
exact localization.

\begin{theorem}[Stability under localization]\label{thm:stability-under-localization}
  If the later modality structure $\prn{\Ltr,\Next}$ on $\SCat$ supports L\"ob
  induction, then so does $\prn{\Ltr_L,\Next_L}$ for a left exact localization
  $\Mor[L]{\SCat}{\TCat}$. Hence if $\SCat$ is an elementary geometric model of
  synthetic guarded domain theory, then so is any left exact localization of
  $\SCat$.
\end{theorem}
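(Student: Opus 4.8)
The plan is to verify directly that the internal sequent $\phi:\Omega_{\TCat}\mid \Ltr_L\phi\Rightarrow\phi\vdash\phi$ holds, exploiting the fact that the internal propositional logic of a left exact localization $\Mor[L]{\SCat}{\TCat}$ embeds into that of $\SCat$ through the closure modality. Presenting $L$, up to equivalence, as the inclusion of the $j$-sheaves for a Lawvere--Tierney topology $j$ on $\SCat$, the subobject classifier of $\TCat$ is the object $\Omega_j\hookrightarrow\Omega$ of $j$-closed propositions, and the monotone, inflationary, idempotent, finite-meet-preserving operator $\Mor[j]{\Omega}{\Omega}$ is exactly the closure (lex) modality whose modal propositions are the $j$-closed ones. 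Under this identification $\GSec{L}$ sends a $\TCat$-proposition $\phi$ to a closed proposition $p=\GSec{L}\phi\in\Omega$ with $jp\Leftrightarrow p$; moreover $\GSec{L}$ preserves $\top,\Conj{},\Rightarrow$ and $\forall$, since the $j$-closed subobjects form a sub-Heyting-algebra stable under these operations. By \cref{con:localized-later-modality} together with left exactness of $\Ltr$, the functor $\Ltr_L=\GSec{L};\Ltr;\Const{L}$ internalizes on $\Omega_{\TCat}$ so that $\GSec{L}\,\Ltr_L\phi\Leftrightarrow j\bigl(\Ltr p\bigr)$ for $p$ closed.

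First I would reduce the claim to an internal statement in $\SCat$: because $\GSec{L}$ is fully faithful and preserves the relevant logical structure, it suffices to show that for every $j$-closed $p\in\Omega$ the $\SCat$-sequent $j(\Ltr p)\Rightarrow p\vdash p$ is valid. Next, I would invoke the unit of the closure modality, $q\vdash jq$, instantiated at $q=\Ltr p$, to obtain $\Ltr p\vdash j(\Ltr p)$; composing with the hypothesis $j(\Ltr p)\Rightarrow p$ yields $\Ltr p\Rightarrow p$. Finally, I would apply the L\"ob induction principle already available in $\SCat$ to the proposition $p$, concluding $p$. This shows $(\Ltr_L,\Next_L)$ supports L\"ob induction. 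The ``hence'' clause then follows by combining this with \cref{lem:localized-later-modality-well-pointed}: $\TCat$ carries a well-pointed later modality supporting L\"ob induction, which by \cref{def:elementary-sgdt} makes it an elementary geometric model of synthetic guarded domain theory.

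The routine-but-delicate part, and the only genuine obstacle, is the bookkeeping about how $\GSec{L}$ interacts with internal logic: one must check that $\Rightarrow$ and $\vdash$ are computed the same way for $j$-closed propositions as in $\SCat$ (standard; cf.\ \cite{johnstone:2002}), and that the internalization of $\Ltr_L$ to $\Omega_{\TCat}$ agrees with $j\circ\Ltr$ on closed propositions, which is immediate from \cref{con:localized-later-modality} and left exactness of $\Ltr$. Once these identifications are in place the argument is exactly the three-line chain above, and note that $\Next_L$ itself enters only through \cref{lem:localized-later-modality-well-pointed}.
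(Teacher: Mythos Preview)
Your proposal is correct and is precisely the argument the paper has in mind: the text immediately preceding the theorem states that it ``follows nearly immediately from an internal logic argument, using the closure modality associated to any left exact localization,'' and you have spelled out exactly that argument. Your reduction to the $\SCat$-sequent $j(\Ltr p)\Rightarrow p\vdash p$ for $j$-closed $p$, followed by the unit $\Ltr p\vdash j(\Ltr p)$ and an appeal to L\"ob in $\SCat$, is the intended three-line proof.
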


By the above, we may conclude that \emph{any} category $\ECat$ of
$\SCat$-valued sheaves inherits an elementary model of synthetic guarded domain
theory from the base geometric universe $\SCat$, if it is so-equipped; note
this model could depend on the chosen presentation of $\ECat$ by an $\SCat$-site.
Special care must be taken, as localizations need not preserve global adequacy;
for example, the localization could \emph{trivialize} the later modality in
the sense of making $\Ltr_L{A} = \ObjTerm{\TCat}$ for all $A\in\TCat$.

\subsection{Base models from intuitionistic well-founded posets}\label{sec:base-models}

In the preceeding sections we have shown stability of elementary geometric
models of synthetic guarded domain theory under basic topos theoretic
constructions: presheaves and localization; from these stability properties it
follows that the logos presented by \emph{any} $\ECat$-site inherits guarded
recursion from an elementary geometric model $\ECat$. But how do we construct a
geometric model $\ECat$ in the first place?  Birkedal~\etal~\cite{bmss:2011}
provide a simple recipe for constructing such models, working in the more
restrictive setting of $\SET$-logoi; in particular, it is verified that for any
$\SET$-locale $\XTop$ whose frame of opens $\Opns{\XTop}$ has a well-founded
basis, the logos $\Sh{\XTop}$ is an elementary geometric model of synthetic
guarded domain theory.

In this section, we carry out a (non-trivial) generalization of the results of
\opcit to $\SCat$-locales for an arbitrary geometric universe $\SCat$. Indeed,
not only are the arguments of \opcit non-constructive and thus invalid over an
arbitrary geometric universe: the definition of \emph{well-founded poset} must
be adjusted as well:
\begin{enumerate}

  \item It will not do to define $u<v$ as $\lnot\prn{v\leq u}$ unless $\SCat$
    is boolean; thus the well-founded order on a poset must be an additional
    structure that is compatible with the original order in a certain way.

  \item The classical definition of well-foundedness in terms of infinite descending
    chains is correct if and only if $\SCat$ satisfies dependent choice.

\end{enumerate}

\subsubsection{Basic definitions: intuitionistic well-founded posets and frames}

\begin{definition}
  Let $R\subseteq P\times P$ be a binary relation in $\SCat$. The
  \DefEmph{$R$-accessible elements} of $P$ are the smallest subset
  $\Acc{R}\subseteq P$ spanned by elements $u\in P$ such that for all
  $v\mathrel{R}u$ we have $v \in \Acc{R}$.
  We say that $R$ is \DefEmph{well-founded} when $\Acc{R}\subseteq P$ is $P$ itself.
\end{definition}

\begin{definition}
  Let $\prn{\mathbb{P},\leq}$ be a preorder in $\SCat$; we define a \DefEmph{compatible
  well-founded relation} on $\prn{\mathbb{P},\leq}$ to be a transitive binary subrelation ${\prec} \subseteq
  {\leq}\subseteq \mathbb{P}\times \mathbb{P}$ satisfying the following additional axioms:
  \begin{enumerate}
    \item \emph{Left compatibility.} If $u\leq v$ and $v\prec w$ then $u\prec w$.
    \item \emph{Right compatibility.} If $u \prec v$ and $v\leq w$ then $u\prec w$.
    \item \emph{Well-foundedness.} The relation ${\prec}\subseteq \mathbb{P}\times \mathbb{P}$ is well-founded.
  \end{enumerate}
\end{definition}

\begin{definition}
  We define an \DefEmph{intuitionistic well-founded preorder} to be a triple
  $\prn{\mathbb{P},\leq,\prec}$ in $\SCat$ such that $\prn{\mathbb{P},\leq}$ is a preorder and
  ${\prec}$ is a compatible well-founded relation on $\prn{\mathbb{P},\leq}$. Likewise
  we will speak of an \DefEmph{intuitionistic well-founded poset} to refer to an
  intuitionistic well-founded preorder for which $\leq$ satisfies
  anti-symmetry.
\end{definition}

\begin{definition}[Connected poset]
  Let $\mathbb{P}$ poset object in $\SCat$. We say that $u,v\in\mathbb{P}$ are \emph{comparable} if
  $u\leq v$ or $v \leq u$. A poset object is \emph{connected} if for each $u,v\in\mathbb{P}$ there
  exists a finite sequence $u = c_0,\dots,c_n = v$ in $\mathbb{P}$ such that $c_i$ and $c_{i+1}$
  are comparable for each $i$.
\end{definition}

\begin{definition}[Internal frames]
  A \emph{frame} in $\SCat$ is defined to be a poset object that is closed
  under $\SCat$-joins and finite meets, such that finite meets distribute over
  $\SCat$-joins.
\end{definition}

\begin{definition}[Basis for a frame]
  Let $\mathbb{A}$ be a frame in $\SCat$ and let $\mathbb{K}\subseteq\mathbb{A}$ be a subposet of $A$;
  then $\mathbb{K}$ is called a \emph{basis} for $\mathbb{A}$ when every $u\in \mathbb{A}$ is the least
  upper bound of all the $k\in \mathbb{K}$ such that $k\leq u$.
\end{definition}

\NewDocumentCommand\Pred{}{\Con{p}}

\subsubsection{A base model in sheaves on a frame with well-founded basis}

Any frame $\mathbb{A}$ in $\SCat$ gives rise to an internal site; the underlying
internal category is $\mathbb{A}$ itself, and a family $\brc{{v_i}\leq{u}}$ is covering
when $u=\Disj{i}v_i$. We will write $\widetilde{\mathbb{A}}$ for the $\SCat$-topos
obtained by setting $\Sh{\widetilde{\mathbb{A}}}$ to be the category of $\SCat$-valued
sheaves on the internal site $\mathbb{A}$, letting the structure map
$\Mor[\widetilde{\mathbb{A}}]{\SCat}{\Sh{\widetilde{\mathbb{A}}}}$ take any object of $\SCat$ to
its constant sheaf. Of course we have an embedding of $\SCat$-topoi
$\EmbMor[i]{\widetilde{\mathbb{A}}}{\PrTop{\mathbb{A}}}$ such that the direct image $i_*$ regards
a sheaf as a presheaf and the inverse image $i^*$ sheafifies a presheaf.

We first construct a later modality structure on the presheaf logos
$\Sh{\PrTop{\mathbb{A}}} = \Psh[\SCat]{\mathbb{A}} = \brk{\OpCat{\mathbb{A}},\SCat}$ assuming that there
exists an intuitionistic well-founded preorder $\prn{\mathbb{K},\leq,\prec}$ that forms
a basis for $\mathbb{A}$. For each $u\in \mathbb{A}$, we will write $\mathbb{K}\Sub{\leq u}\subseteq \mathbb{K}$
for the subposet spanned by $k\in \mathbb{K}$ such that $k\leq u$ and $\mathbb{K}\Sub{\prec u}$
for the subposet spanned by $k\in \mathbb{K}$ such that there exists $l\in \mathbb{K}\Sub{\leq
u}$ with $k\prec\Sub{\mathbb{K}} l$.
Following Birkedal~\etal~\cite{bmss:2011} we may define a \emph{predecessor}
operation on $\mathbb{A}$ as a monotone endofunction:
\[
  \begin{mathblock}
    \Mor[\Pred]{\mathbb{A}}{\mathbb{A}}\\
    \Pred\,{u} = \Disj{k\in \mathbb{K}\Sub{\prec u}}{k}
  \end{mathblock}
\]

The predecessor operation induces an essential morphism of $\SCat$-topoi
$\Mor[\PrTop{\Pred}]{\PrTop{\mathbb{A}}}{\PrTop{\mathbb{A}}}$ whose inverse image functor
$\Mor[\PrTop{\Pred}^*]{\Sh{\PrTop{\mathbb{A}}}}{\Sh{\PrTop{\mathbb{A}}}}$ is given by
precomposition with $\Pred$. We have a natural transformation
$\Mor[\nu]{\Idn{\Sh{\PrTop{\Pred}}}}{\PrTop{\Pred}^*}$ defined pointwise by
restriction like so:
\[
  \begin{mathblock}
    \Mor[\nu_E]{E}{\PrTop{\Pred}^*E}\\
    \nu\Sub{E}^u\, e = e\Sub{\vert \Pred{u}}
  \end{mathblock}
\]

The following result follows immediately by computation.

\begin{lemma}
  The pair $\prn{\PrTop{\Pred}^*,\nu}$ comprise a well-pointed later modality structure on $\Sh{\PrTop{\mathbb{A}}}$.
\end{lemma}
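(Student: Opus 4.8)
The plan is to verify the three components of the claim --- that $\PrTop{\Pred}^*$ is a left exact endofunctor, that $\nu$ is natural, and that the pair is well-pointed in the sense of \cref{def:well-pointed} --- directly by unwinding the pointwise formulas, working throughout in the internal language of $\SCat$.

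First I would observe that $\PrTop{\Pred}^*$ is left exact essentially for free: it is precomposition with the monotone endofunction $\Mor[\Pred]{\mathbb{A}}{\mathbb{A}}$, and restriction/reindexing functors between presheaf categories are computed pointwise, so they preserve all limits that exist pointwise; in particular they preserve finite limits. (Concretely, $\prn{\PrTop{\Pred}^*E}_u = E_{\Pred\,u}$ and finite limits of presheaves are taken objectwise, so $\PrTop{\Pred}^*$ commutes with them.) That $\PrTop{\Pred}^*$ is an endofunctor on $\Sh{\PrTop{\mathbb{A}}} = \Psh[\SCat]{\mathbb{A}}$ is exactly the statement that $\Pred$ is a well-defined monotone map $\mathbb{A}\to\mathbb{A}$, which was recorded just above the lemma.

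Next I would check naturality of $\Mor[\nu]{\Idn{}}{\PrTop{\Pred}^*}$. Given $\Mor[f]{E}{F}$ in $\Psh[\SCat]{\mathbb{A}}$ and $u\in\mathbb{A}$, both composites $\nu_F^u\circ f_u$ and $\prn{\PrTop{\Pred}^*f}_u\circ\nu_E^u$ send $e\in E_u$ to $\prn{f_u\,e}_{\vert\Pred\,u} = f_{\Pred\,u}\prn{e_{\vert\Pred\,u}}$, using only that $f$ is a natural transformation of presheaves (so it commutes with the restriction along $\Pred\,u\leq u$). One should also note in passing that $\nu$ itself, restricted pointwise, is a legitimate natural transformation in the presheaf variable: this is just functoriality of restriction. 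All of this is the ``follows immediately by computation'' that the statement advertises.

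The only step with any content is well-pointedness, i.e.\ that $\nu_{\PrTop{\Pred}^*E} = \PrTop{\Pred}^*\prn{\nu_E}$ for every presheaf $E$ (these are the two composites $\Idn{}\Rightarrow\PrTop{\Pred}^*\PrTop{\Pred}^*$ appearing in \cref{def:well-pointed}). Unwinding at a stage $u$, the left-hand side sends $e\in E_{\Pred\,u}$ to its restriction along $\Pred\,u\leq u$ composed appropriately, landing in $E_{\Pred\,\Pred\,u}$ via the map induced by $\Pred\,\Pred\,u \leq \Pred\,u$; the right-hand side applies $\PrTop{\Pred}^*$ to $\nu_E$ at stage $u$, which is $\nu_E$ at stage $\Pred\,u$, again restriction $E_{\Pred\,u}\to E_{\Pred\,\Pred\,u}$ along $\Pred\,\Pred\,u\leq\Pred\,u$. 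Both are the single restriction map of $E$ along the inequality $\Pred\,\Pred\,u \leq \Pred\,u$ in $\mathbb{A}$, so they agree by functoriality of $E$. The substantive point here --- and the step I expect to be the only real obstacle --- is confirming that $\Pred$ is monotone, so that $\Pred\,\Pred\,u \leq \Pred\,u$ genuinely holds and both restrictions are along the \emph{same} morphism of $\mathbb{A}$; this in turn reduces to the compatibility axioms for the relation $\prec$ on the basis $\mathbb{K}$ (if $u\leq v$ then $\mathbb{K}_{\prec u}\subseteq\mathbb{K}_{\prec v}$ by right compatibility, hence $\Pred\,u = \Disj{k\in\mathbb{K}_{\prec u}}k \leq \Disj{k\in\mathbb{K}_{\prec v}}k = \Pred\,v$), and to the observation $\Pred\,u\leq u$ which holds because each $k\in\mathbb{K}_{\prec u}$ satisfies $k\prec l\leq u$ for some $l$, hence $k\leq u$ by the compatibility of $\prec$ with $\leq$. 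Once monotonicity of $\Pred$ is in hand, everything else is bookkeeping with presheaf restriction maps.
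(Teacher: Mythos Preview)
Your proposal is correct and is precisely the direct pointwise computation the paper gestures at with ``follows immediately by computation''; the paper gives no further details, so your unfolding is exactly what is intended. One minor remark: monotonicity of $\Pred$ is already asserted in the paper just before the definition of $\nu$, so your derivation of it from the compatibility axioms is a welcome elaboration rather than a missing step, and the well-pointedness identity at stage $u$ holds because both sides are literally the same restriction map of $E$ along $\Pred\,\Pred\,u\leq\Pred\,u$ (no appeal to functoriality of $E$ is needed beyond the map being well-defined).
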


Note that the later modality structure on $\Sh{\PrTop{\mathbb{A}}}$ so-defined need not
support L\"ob induction. Using \cref{con:localized-later-modality} we obtain a
later modality structure $\prn{\Ltr,\Next}$ on the sheaf logos
$\Sh{\widetilde{\mathbb{A}}}$ via the localization
$\Mor[\Sh{i}]{\Sh{\PrTop{\mathbb{A}}}}{\Sh{\widetilde{\mathbb{A}}}}$, and this structure remains
well-pointed by \cref{lem:localized-later-modality-well-pointed}. The following
result is proved using the Kripke--Joyal semantics of $\Sh{\widetilde{\mathbb{A}}}$ over
$\SCat$; see \cref{appendix:base-model} for the details.

\begin{restatable}[Base model]{theorem}{ThmBaseModel}\label{thm:base-model}
  The well-pointed later modality structure $\prn{\Ltr,\Next}$ on
  $\Sh{\widetilde{\mathbb{A}}}$ supports L\"ob induction, hence $\Sh{\widetilde{\mathbb{A}}}$ is
  an elementary geometric model of synthetic guarded domain theory.
\end{restatable}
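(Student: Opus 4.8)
The plan is to reduce L\"ob induction in $\Sh{\widetilde{\mathbb{A}}}$ to a well-founded induction along the relation $\prec$ on the basis $\mathbb{K}$, carried out in the internal language of $\SCat$ through the Kripke--Joyal semantics of $\Sh{\widetilde{\mathbb{A}}}$ over $\SCat$. L\"ob induction is the sequent $\phi:\Omega\mid\Ltr\phi\Rightarrow\phi\vdash\phi$; unfolding the forcing clauses for $\forall$ and $\Rightarrow$ over the locale $\widetilde{\mathbb{A}}$, and using that subterminal sheaves on $\mathbb{A}$ are classified by opens, it suffices to fix a stage $u\in\mathbb{A}$ and an open $W\leq u$ naming a proposition $\phi$, to assume the L\"ob premise that every $v\leq u$ forcing $\Ltr\phi$ satisfies $v\leq W$, and to deduce $u\leq W$. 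The only fact about the later modality we shall need is one half of its forcing clause: by \cref{con:localized-later-modality} the modality $\Ltr$ on $\Sh{\widetilde{\mathbb{A}}}$ regards a sheaf as a presheaf (via $i_*$), applies the presheaf-level later $\PrTop{\Pred}^*$, and sheafifies (via $i^*$); the presheaf $\PrTop{\Pred}^*\prn{i_*\phi}$ is inhabited at a stage $v$ exactly when $\Pred\,v\leq W$, and since sheafification only enlarges the extent of a subterminal presheaf, $\Pred\,v\leq W$ already implies that $v$ forces $\Ltr\phi$.

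The heart of the argument exploits the compatibility axioms via the following observation: if $k\in\mathbb{K}$ with $k\leq u$, then every $m\in\mathbb{K}\Sub{\prec k}$ satisfies both $m\prec k$ and $m\leq u$. Indeed, by definition of $\mathbb{K}\Sub{\prec k}$ there is some $l\in\mathbb{K}\Sub{\leq k}$ with $m\prec\Sub{\mathbb{K}}l$; since $\prec$ is right-compatible with $\leq$ and $l\leq k\leq u$ we get $m\prec k$, and since $m\leq l\leq k\leq u$ (using $\prec\subseteq\leq$) we get $m\leq u$. Now put $P(k)\coloneqq\prn{k\leq u\Rightarrow k\leq W}$ for $k\in\mathbb{K}$, and prove $\forall k\in\mathbb{K}.\,P(k)$ by well-founded induction on $\prec$, which is legitimate because $\prn{\mathbb{K},\leq,\prec}$ is an intuitionistic well-founded preorder. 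Assuming $P(m)$ for all $m\prec k$ and assuming $k\leq u$, the observation gives $m\leq W$ for every $m\in\mathbb{K}\Sub{\prec k}$, so that
\[
  \Pred\,k \;=\; \Disj{m\in\mathbb{K}\Sub{\prec k}}{m}\;\leq\;W;
\]
by the forcing fact recalled above, $k$ forces $\Ltr\phi$, and applying the L\"ob premise at the stage $k\leq u$ delivers $k\leq W$, which is $P(k)$.

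Since $\mathbb{K}$ is a basis for $\mathbb{A}$ we have $u=\Disj{k\in\mathbb{K}\Sub{\leq u}}{k}$, and each such $k$ satisfies $k\leq W$ by $P(k)$; hence $u\leq W$, as required, and $\prn{\Ltr,\Next}$ supports L\"ob induction. The ``hence'' of the theorem is then immediate: $\Sh{\widetilde{\mathbb{A}}}$ is by construction a geometric universe, its later modality structure is well-pointed by \cref{lem:localized-later-modality-well-pointed}, and it now supports L\"ob induction, so it is an elementary geometric model of synthetic guarded domain theory by \cref{def:elementary-sgdt}.

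I expect the genuine difficulty to lie not in the induction but in the bookkeeping of the relative Kripke--Joyal semantics: one must check that propositions over $\Sh{\widetilde{\mathbb{A}}}$ relative to $\SCat$ are classified by $\SCat$-indexed families of opens of $\mathbb{A}$, that the forcing clauses for $\forall$, $\Rightarrow$, and $\Ltr$ take the shape used above, and that each step survives the passage to an arbitrary base $\SCat$ --- in particular that the well-founded induction is available internally and that the joins displayed above are the $\SCat$-indexed joins of the frame. The full details are deferred to \cref{appendix:base-model}. We note that, unlike Birkedal~\etal~\cite{bmss:2011}, this argument produces L\"ob induction directly, so that the existence of term-level guarded fixed points follows formally via \cref{lem:gr-iff-loeb} and the non-constructive construction of \opcit is avoided.
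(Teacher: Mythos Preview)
Your proposal is correct and follows essentially the same route as the paper's proof: both reduce L\"ob induction to well-founded induction on the basis $\mathbb{K}$ via the Kripke--Joyal semantics, using that $u=\Disj{k\in\mathbb{K}\Sub{\leq u}}k$ and that the inductive hypothesis at $m\prec k$ yields $\Pred\,k$ forcing $\phi$, whence $k$ forces $\Ltr\phi$ and the L\"ob premise closes the step. Your version is in fact a bit more explicit than the paper's about why the compatibility axioms are needed (your observation that $m\in\mathbb{K}\Sub{\prec k}$ implies $m\prec k$ and $m\leq u$), and you induct over all of $\mathbb{K}$ with the guarded predicate $P(k)=(k\leq u\Rightarrow k\leq W)$ rather than restricting attention to $\mathbb{K}\Sub{\leq u}$ from the outset, but these are cosmetic differences.
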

\section{Classifying topoi in single-clock guarded recursion}\label{sec:single-clock}

As Birkedal~\etal~\cite{bmss:2011} have pointed out, presheaves on a
well-founded poset are an instance of the sheaf models of guarded recursion
considered in \cref{sec:base-models}. This is so because presheaves on a
well-founded poset $\mathbb{P}$ are the same as sheaves on the \emph{algebraic locale}
$\PrTop{\mathbb{P}}$, whose frame of opens $\Opns{\PrTop{\mathbb{P}}}$
is the free cocompletion of $\mathbb{P}$ under all $\SCat$-joins and whose poset of points $\brk{\PtTop,\PrTop{\mathbb{P}}}$ is the free \emph{filtered} cocompletion of $\OpCat{\mathbb{P}}$. Thus $\mathbb{P}$ will turn out to be a
(well-founded) basis for $\Opns{\PrTop{\mathbb{P}}}$.

\emph{Explication.} Let $\prn{\mathbb{P},\leq,\prec}$ be an intuitionistic well-founded
poset in $\SCat$; we may define an $\SCat$-locale $\PrTop{\mathbb{P}}$ whose frame of
opens $\Opns{\PrTop{\mathbb{P}}}$ consists of all the downsets of $\mathbb{P}$ ordered by
inclusion, \ie the $\SCat$-poset of $\SCat$-poset homomorphisms
$\brk{\OpCat{\mathbb{P}},\Omega}$.\footnote{The frame $\Opns{\PrTop{\mathbb{P}}}$ was
referred to by Birkedal~\etal~\cite{bmss:2011} as the \emph{ideal completion}
of $\mathbb{P}$.  When $\mathbb{P}$ is a total order, $\Opns{\PrTop{\mathbb{P}}}$ is indeed the
ideal completion of $\mathbb{P}$ but this need not be the case otherwise; on the other
hand, the poset of points of the locale $\PrTop{\mathbb{P}}$ under the specialization
order is indeed the ideal completion of $\OpCat{\mathbb{P}}$ by Diaconescu's
theorem~\cite{diaconescu:1975}.} Then the Yoneda embedding
$\EmbMor{\mathbb{P}}{\Opns{\PrTop{\mathbb{P}}}}$ exhibits $\mathbb{P}$ as a basis for $\Opns{\PrTop{\mathbb{P}}}$,
hence by \cref{thm:base-model} we have an elementary geometric model of
synthetic guarded domain theory in $\Sh{\PrTop{\mathbb{P}}}$. Furthermore, the geometric
universe $\Sh{\PrTop{\mathbb{P}}}$ can be seen to be the category $\Psh[\SCat]{\mathbb{P}}$ of
$\SCat$-valued presheaves on the internal site $\mathbb{P}$.

\begin{lemma}\label{lem:iwfp-adequacy}
  Let $\prn{\mathbb{P},\leq,\prec}$ be an intuitionistic well-founded poset in
  $\SCat$. We will write $\mathbb{P}\Sub{\prec}=\Compr{u}{\exists v.u\prec v}$
  for the subposet spanned by elements lying strictly below another element. If
  both $\mathbb{P}$ and $\mathbb{P}\Sub{\prec}$ are connected, then the later
  modality structure on $\Sh{\PrTop{\mathbb{P}}}$ is globally adequate relative
  to $\Mor[\PrTop{\mathbb{P}}]{\SCat}{\Sh{\PrTop{\mathbb{P}}}}$.
\end{lemma}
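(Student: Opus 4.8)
The plan is to verify directly that the canonical 2-cell $\Con{force}\colon\GSec{\PrTop{\mathbb{P}}}\Ltr\mathbb{N}\to\GSec{\PrTop{\mathbb{P}}}\mathbb{N}$ defining global adequacy is invertible, by computing its two endpoints in the presheaf logos $\Sh{\PrTop{\mathbb{P}}}\simeq\Psh[\SCat]{\mathbb{P}}$ of the explication above. I will use two standing facts: $\GSec{\PrTop{\mathbb{P}}}$ sends a presheaf $F$ on $\mathbb{P}$ to its limit $\lim_{\OpCat{\mathbb{P}}}F$, equivalently to its sections over the top open $\mathbb{P}\in\Opns{\PrTop{\mathbb{P}}}$; and the natural numbers object of $\Sh{\PrTop{\mathbb{P}}}$ is the constant presheaf on $\mathbb{N}$. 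The right-hand endpoint is then immediate: a global section of this object is a family $\prn{a_p}_{p\in\mathbb{P}}$, $a_p\in\mathbb{N}$, that is constant along every comparable pair, so since $\mathbb{P}$ is connected (hence inhabited) every such family is globally constant, and $\GSec{\PrTop{\mathbb{P}}}\mathbb{N}\cong\mathbb{N}$.

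For the left-hand endpoint I would unfold $\Ltr$ via \cref{thm:base-model}. Writing $\mathbb{A}=\Opns{\PrTop{\mathbb{P}}}$ with the Yoneda image of $\mathbb{P}$ as a well-founded basis, $\Ltr$ is obtained by \cref{con:localized-later-modality} from the later modality on $\Sh{\PrTop{\mathbb{A}}}$ given by precomposition with the predecessor operator $\Pred$, along the localization $\Sh{i}\colon\Sh{\PrTop{\mathbb{A}}}\to\Sh{\widetilde{\mathbb{A}}}$; that is, $\Ltr=\GSec{\Sh{i}};\PrTop{\Pred}^*;\Const{\Sh{i}}$. A direct computation with $\Pred$ gives $\Pred\,\mathbb{P}\Sub{\leq p}=\mathbb{P}\Sub{\prec p}=\Compr{m}{m\prec p}$ and, at the top, $\Pred\,\mathbb{P}=\mathbb{P}\Sub{\prec}$. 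Since a representable downset carries only the trivial cover, the sheafification $\Const{\Sh{i}}$ is inert on it, so $\Ltr F$, regarded as a presheaf on $\mathbb{P}$, satisfies $(\Ltr F)(p)\cong\lim_{m\prec p}F(m)$; assembling these over $p\in\OpCat{\mathbb{P}}$ exhibits $\GSec{\PrTop{\mathbb{P}}}\Ltr F$ as a limit over a diagram built from the relation ${\prec}$ and the open $\mathbb{P}\Sub{\prec}$. For the constant presheaf $F=\mathbb{N}$ this is an iterated limit of constant diagrams, and the hypothesis that $\mathbb{P}\Sub{\prec}$ is connected (hence inhabited) lets the same kind of cone argument collapse it, giving $\GSec{\PrTop{\mathbb{P}}}\Ltr\mathbb{N}\cong\mathbb{N}$.

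It then remains to check that $\Con{force}$ realizes this isomorphism: under the identifications above, $\GSec{\PrTop{\mathbb{P}}}\Next_{\mathbb{N}}$ is induced by the restriction maps of the constant presheaf along the open inclusion $\mathbb{P}\Sub{\prec}\hookrightarrow\mathbb{P}$, and restricting a constant family between two connected inhabited opens is an isomorphism onto $\mathbb{N}$; hence $\Con{force}$ is invertible, which is precisely global adequacy relative to $\PrTop{\mathbb{P}}$. I expect the main obstacle to be the second step: one must track the localized later modality carefully through the sheaf/presheaf adjunction, in particular across the sheafification $\Const{\Sh{i}}$ — which is not inert on arbitrary opens — and see that, although $\Ltr\mathbb{N}$ is not itself a constant presheaf, its global sections are nonetheless controlled by the open $\mathbb{P}\Sub{\prec}$, so that connectedness of both $\mathbb{P}$ and $\mathbb{P}\Sub{\prec}$ is exactly what forces both endpoints of $\Con{force}$ to be $\mathbb{N}$ compatibly.
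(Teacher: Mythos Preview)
Your approach is correct and essentially the same as the paper's: both compute $\GSec{\PrTop{\mathbb{P}}}\Ltr\mathbb{N}$ as the iterated limit $\lim_{u\in\mathbb{P}}\lim_{v\prec u}\mathbb{N}$, rewrite it as a connected limit over $\mathbb{P}\Sub{\prec}$, and conclude $\mathbb{N}$ on both sides via connectedness. The paper's proof is simply a terser version of your outline.

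One small correction to your justification of the formula $(\Ltr F)(p)\cong\lim_{m\prec p}F(m)$: the claim that a representable downset ``carries only the trivial cover'' is false in the canonical topology on $\mathbb{A}=\Opns{\PrTop{\mathbb{P}}}$ (for instance $\{\downarrow q : q\leq p\}$ covers $\downarrow p$). The correct reason the formula holds is that under the equivalence $\Sh{\widetilde{\mathbb{A}}}\simeq\Psh[\SCat]{\mathbb{P}}$, the inclusion $i_*$ is right Kan extension $y_*$ along Yoneda $y\colon\mathbb{P}\hookrightarrow\mathbb{A}$, so its left adjoint $i^*$ corresponds to plain restriction $y^*$; hence $(\Ltr F)(p)=(y^*\PrTop{\Pred}^*y_*F)(p)=(y_*F)(\Pred(\downarrow p))=\lim_{m\prec p}F(m)$, no appeal to sheafification behaviour on individual objects needed.
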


\begin{proof}
  We note that $\GSec{\PrTop{\mathbb{P}}}\Ltr{\mathbb{N}}$ may be computed as
  the limit $\Lim{u\in\mathbb{P}}\Lim{v\prec u}\mathbb{N}$, which is also the
  connected limit $\Lim{u\in\mathbb{P}\Sub{\prec}}\mathbb{N}$; hence
  $\GSec{\PrTop{\mathbb{P}}}\Ltr{\mathbb{N}} \cong \mathbb{N} \cong
  \Lim{u\in\mathbb{P}}\mathbb{N} \cong \GSec{\PrTop{\mathbb{P}}}\mathbb{N}$.
\end{proof}

Most models of guarded recursion used in practice are indeed of this kind; in
addition to the global adequacy result (\cref{lem:iwfp-adequacy}) and the
simplicity of working with presheaves, another advantage of the presheaf models
is that they can be characterized as classifying topoi for remarkably simple
and elegant geometric theories.

\begin{lemma}\label{lem:diaconescu:poset}
  For any poset $\prn{\mathbb{P},\leq}$ in $\SCat$, the
  algebraic $\SCat$-topos $\PrTop{\mathbb{P}}$
  classifies the geometric theory of \emph{filters} on $\mathbb{P}$, \ie
  the theory $\ThyFilt{\mathbb{P}}$ axiomatized below:
  \begin{mathpar}
    \inferrule{
      u\in \mathbb{P}
    }{
      \cdot\vdash \gl{u} : \Con{prop}
    }
    \and
    \inferrule{
      u\leq v \in \mathbb{P}
    }{
      \cdot\mid \gl{u}\vdash \gl{v}
    }
    \and
    \inferrule{
    }{
      \cdot\mid\top\vdash \Disj{u\in \mathbb{P}}\gl{u}
    }
    \and
    \inferrule{
      u,v\in \mathbb{P}
    }{
      \cdot\mid \gl{u}\land\gl{v}\vdash\Disj{w\in \mathbb{P}, w \leq u, w\leq v} \gl{w}
    }
  \end{mathpar}
\end{lemma}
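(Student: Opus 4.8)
The plan is to identify the classifying topos of $\ThyFilt{\mathbb{P}}$ via the standard presentation of classifying topoi of geometric theories as sheaf topoi on syntactic sites, and then recognize that the resulting site is equivalent to $\mathbb{P}$ with the trivial topology (so that sheaves are just presheaves). First I would observe that $\ThyFilt{\mathbb{P}}$ is a \emph{propositional geometric theory}: it has no sorts and no function symbols, only the propositional constants $\gl{u}$ for $u\in\mathbb{P}$ together with the displayed sequents. For propositional geometric theories the classifying topos is well known to be the topos of sheaves on the \emph{Lindenbaum algebra} (the poset of provably-equivalent formulae, which here is a frame) equipped with its canonical coverage; this is exactly the content of ``a propositional geometric theory classifies a locale,'' cf.\ Johnstone. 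So the task reduces to computing the frame presented by the generators $\gl{u}$ and the relations encoded by the four sequent schemes, and checking it is the frame $\Opns{\PrTop{\mathbb{P}}}$ of downsets of $\mathbb{P}$ — equivalently, that the site is $\mathbb{P}$ itself with the coverage generated by the relations, which turns out to be subcanonical-but-trivial so that $\Sh{\PrTop{\mathbb{P}}}=\Psh[\SCat]{\mathbb{P}}$.

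Concretely, I would carry out the following steps. Step 1: read off from the sequents that the second rule forces $\gl{u}\leq\gl{v}$ whenever $u\leq v$, so the assignment $u\mapsto\gl{u}$ factors through a monotone map $\mathbb{P}\to F$ into the generated frame $F$; the fourth rule forces $\gl{u}\wedge\gl{v}=\Disj{w\leq u, w\leq v}\gl{w}$, i.e.\ that binary meets of generators are already computed as the join of the generators below both — this is precisely the statement that the $\gl{u}$ span a basis closed under the relevant meets; the third rule forces $\Disj_{u}\gl{u}=\top$. Step 2: show that the frame $F$ so-presented is isomorphic to $\Opns{\PrTop{\mathbb{P}}} = \brk{\OpCat{\mathbb{P}},\Omega}$, the frame of downsets: the universal monotone-map-into-a-frame through which $\mathbb{P}\hookrightarrow\Opns{\PrTop{\mathbb{P}}}$ (Yoneda) factors, subject to exactly the meet- and top-relations above, is the free frame on the $\SCat$-poset $\mathbb{P}$ preserving finite meets ``as computed in $\mathbb{P}$'' — but since $\Opns{\PrTop{\mathbb{P}}}$ is by construction the free cocompletion of $\mathbb{P}$ under $\SCat$-joins and $\mathbb{P}$ already has the meets in question, this free frame is $\Opns{\PrTop{\mathbb{P}}}$. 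One checks the universal property directly: a frame homomorphism $\Opns{\PrTop{\mathbb{P}}}\to G$ is the same as a monotone map $\mathbb{P}\to G$ sending the joins/meets present in $\mathbb{P}$ to those of $G$, which is the same as a $\ThyFilt{\mathbb{P}}$-model in $G$ (equivalently, a point of $G$'s topos satisfying the axioms — a filter). Step 3: having identified $F\cong\Opns{\PrTop{\mathbb{P}}}$, note that the canonical coverage on this frame pulls back, along the basis inclusion $\mathbb{P}\hookrightarrow\Opns{\PrTop{\mathbb{P}}}$, to the \emph{trivial} coverage on $\mathbb{P}$: a ``covering'' of $u\in\mathbb{P}$ by basic elements in the subcanonical topology of the frame, when restricted to the basis $\mathbb{P}$, can only consist of families containing $u$ itself (since $u$ is, by Diaconescu/the description of points, a ``completely prime'' basic open relative to $\mathbb{P}$). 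Hence $\Sh{\PrTop{\mathbb{P}}} = \Sh{\Opns{\PrTop{\mathbb{P}}}} \simeq \Sh{(\mathbb{P},\text{trivial})} = \Psh[\SCat]{\mathbb{P}}$, matching the earlier \emph{Explication} paragraph, and all of this is carried out $\SCat$-internally so the argument is relative to the base geometric universe. Step 4: conclude that the $\SCat$-points of $\PrTop{\mathbb{P}}$ — equivalently, the $\SCat$-valued models of $\ThyFilt{\mathbb{P}}$ — are exactly the filters on $\mathbb{P}$, via the correspondence between frame homomorphisms $\Opns{\PrTop{\mathbb{P}}}\to\SCat$ (equivalently, points of the locale) and filters, which is the internal Diaconescu theorem already invoked in the text for total orders.

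I expect the main obstacle to be Step 2/Step 3 done \emph{internally and relatively}: one must be careful that ``free frame on $\mathbb{P}$ subject to those relations'' really is $\Opns{\PrTop{\mathbb{P}}}=\brk{\OpCat{\mathbb{P}},\Omega}$ when $\mathbb{P}$ is merely an internal poset in a geometric universe $\SCat$ with infinitary joins indexed by arbitrary objects of $\SCat$, rather than a $\SET$-small poset — the classical argument about Lindenbaum algebras and subcanonical coverages has to be re-examined so that the join-indexing matches the $\SCat$-indexed geometric logic used to define $\THY{\SCat}$. The cleanest route around this is probably to avoid computing the frame explicitly and instead verify the universal property of $\PrTop{\mathbb{P}}$ as a classifying topos directly: for any $\SCat$-topos $\YTop$, exhibit a natural equivalence between geometric morphisms $\YTop\to\PrTop{\mathbb{P}}$ (i.e.\ by the relative Diaconescu theorem, flat/filtered $\Sh{\YTop}$-valued functors on $\mathbb{P}$, which for a poset are exactly the filters) and $\ThyFilt{\mathbb{P}}$-models in $\Sh{\YTop}$. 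That reduces the whole lemma to the observation that a filtered (= flat) presheaf on a poset $\mathbb{P}$ is precisely a filter, together with matching up the flatness conditions with the four displayed sequents — a routine but slightly fiddly dictionary-check, which is likely exactly how the authors proceed, citing Diaconescu's theorem as they already do in the accompanying footnote.
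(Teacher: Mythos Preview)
Your proposal is correct, and the alternative you identify in your final paragraph is exactly what the paper does: its entire proof is two sentences invoking Diaconescu's theorem to say that $\PrTop{\mathbb{P}}$ classifies $\SCat$-valued flat functors on $\mathbb{P}$, and then observing that a flat functor on a poset is precisely a filter (the four displayed sequents being the monotone, inhabited, and down-directed conditions respectively). Your primary route via the Lindenbaum frame---computing the presented frame as $\Opns{\PrTop{\mathbb{P}}}$ and then using the comparison lemma with the observation that representables are supercompact (your Step~3)---is valid and has the virtue of making the localic picture and the basis inclusion $\mathbb{P}\hookrightarrow\Opns{\PrTop{\mathbb{P}}}$ completely explicit, but it is substantially more work than the paper expends, and as you yourself anticipate, the relative/internal bookkeeping for the free-frame universal property in Step~2 is where the effort would concentrate. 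The Diaconescu route sidesteps all of that: once the relative form of Diaconescu is granted, the lemma is just a dictionary check between ``flat'' and the filter axioms.
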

\begin{proof}
  By Diaconescu's theorem~\cite{diaconescu:1975}, the topos $\PrTop{\mathbb{P}}$
  classifies the theory of (internally) $\SCat$-valued flat functors on
  $\mathbb{P}$. A flat functor a poset is the same as a filter on that poset.
\end{proof}

\begin{example}[The topos of trees]\label{ex:topos-of-trees}
  The \emph{topos of trees}~\cite{bmss:2011} over a geometric universe $\SCat$
  is defined to be the algebraic topos $\PrTop{\omega}$ where $\omega$ is the
  natural numbers object of $\SCat$ with its usual order; this is the
  ``standard'' model of synthetic guarded domain theory. We recall that $\PrTop{\omega} = \ClTop{\ThyFilt{\omega}}$ from \cref{lem:diaconescu:poset}; because
  $\prn{\omega,\leq}$ is a total
  order in $\SCat$, the downward-directedness axiom for filters can be dropped
  and we see that $\PrTop{\omega}$ classifies models of the following even simpler geometric theory:
  \begin{mathpar}
    \inferrule{
      n : \omega
    }{
      \cdot \mid \gl{n} : \Con{prop}
    }
    \and
    \inferrule{
      m \leq n : \omega
    }{
      \cdot \mid \gl{m} \vdash \gl{n}
    }
    \and
    \inferrule{
    }{
      \cdot\mid \top \vdash \Disj{i\in \omega}\gl{i}
    }
  \end{mathpar}
\end{example}

\begin{example}[Successor ordinals]
  We may consider the successor $\omega^+ = \omega\star \brc{\infty}$ that
  adjoins a terminal element to $\omega$. The algebraic topos $\PrTop{\omega^+}$
  classifies a geometric theory analogous to that of
  \cref{ex:topos-of-trees}, but it can also be seen to classify a simpler
  \emph{cartesian} theory by virtue of the fact that $\omega^+$ has all finite
  meets:
  \begin{mathpar}
    \inferrule{
      \alpha : \omega^+
    }{
      \cdot \mid \gl{\alpha} : \Con{prop}
    }
    \and
    \inferrule{
      \alpha \leq \beta : \omega^+
    }{
      \cdot \mid \gl{\alpha} \vdash \gl{\beta}
    }
    \and
    \inferrule{
    }{
      \cdot\mid \top \vdash \gl{\infty}
    }
  \end{mathpar}
\end{example}

\begin{lemma}\label{lem:wf-preorder-skeleton}
  Let $\prn{\mathbb{P},\sqsubseteq,\sqsubset}$ be an intuitionistic well-founded
  preorder and let $\Mor[q]{\prn{\mathbb{P},\sqsubseteq}}{\prn{\mathbb{P}',\leq}}$ be its poset
  reflection.
  We may define a
  compatible well-founded order $\prec\subseteq\leq$ on $\mathbb{P}'$ by considering the
  image of $\sqsubset$ in $\mathbb{P}'$, \ie
  $u\prec v \Longleftrightarrow \forall x,y. u = qx \to v = qy \to x\sqsubset y$.
\end{lemma}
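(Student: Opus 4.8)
The plan is to check, entirely in the internal logic of $\SCat$, that the relation $\prec$ meets all the requirements of a compatible well-founded relation contained in $\leq$: containment in $\leq$, transitivity, left and right compatibility with $\leq$, and well-foundedness. The only facts I would use about the poset reflection $\Mor[q]{\prn{\mathbb{P},\sqsubseteq}}{\prn{\mathbb{P}',\leq}}$ are that $q$ is surjective and that $qx \leq qy$ holds exactly when $x \sqsubseteq y$; in particular, for every $u \in \mathbb{P}'$ the proposition $\exists x.\, qx = u$ holds, and these existentials may be eliminated freely since every goal we encounter will be a proposition. At the outset I would record one further observation: $\sqsubset$ respects the kernel of $q$, \ie if $qx = qx'$, $qy = qy'$ and $x \sqsubset y$ then $x' \sqsubset y'$ --- first slide along $x' \sqsubseteq x$ by left compatibility of $\sqsubset$, then along $y \sqsubseteq y'$ by right compatibility. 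It follows that $\prec$ as defined in the statement --- the ``universal image'' of $\sqsubset$ --- agrees with the ordinary image $\Compr{\prn{u,v}}{\exists x,y.\, u = qx \land v = qy \land x \sqsubset y}$, which both justifies the phrase ``image of $\sqsubset$ in $\mathbb{P}'$'' and lets me use whichever of the two descriptions is more convenient at each point.

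The order-theoretic axioms are then routine unfoldings. For $\prec \subseteq \leq$: if $u \prec v$, choose representatives $x$ of $u$ and $y$ of $v$; then $x \sqsubset y$, hence $x \sqsubseteq y$, hence $u \leq v$. For transitivity: given $u \prec v \prec w$, with chosen representatives $x$ of $u$ and $z$ of $w$, also choose a representative $y$ of $v$; then $x \sqsubset y$ and $y \sqsubset z$, so $x \sqsubset z$ by transitivity of $\sqsubset$. For left compatibility: given $u \leq v$ and $v \prec w$, with representatives $x$ of $u$ and $z$ of $w$, choose a representative $y$ of $v$; then $u \leq v$ forces $x \sqsubseteq y$ and $v \prec w$ forces $y \sqsubset z$, so $x \sqsubset z$ by left compatibility of $\sqsubset$. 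Right compatibility is the mirror image of this, using right compatibility of $\sqsubset$ instead.

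The step with genuine content is well-foundedness of $\prec$, which I would deduce by showing that $q$ carries $\sqsubset$-accessible elements to $\prec$-accessible elements. Precisely, I would prove that $Q = \Compr{x \in \mathbb{P}}{qx \in \Acc{\prec}}$ contains $\Acc{\sqsubset}$; since $\sqsubset$ is well-founded we have $\Acc{\sqsubset} = \mathbb{P}$, and then surjectivity of $q$ promotes $\forall x.\, qx \in \Acc{\prec}$ to $\Acc{\prec} = \mathbb{P}'$, which is exactly well-foundedness of $\prec$. By the elimination (induction) principle for $\Acc{\sqsubset}$ it is enough to see that $Q$ is closed under the accessibility rule: assuming $qx' \in \Acc{\prec}$ for every $x' \sqsubset x$, we must show $qx \in \Acc{\prec}$; and by the spanning rule for $\Acc{\prec}$ it is enough to show $v \in \Acc{\prec}$ for every $v \prec qx$. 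Given such a $v$, choose a representative $x'$ with $qx' = v$; instantiating the definition of $v \prec qx$ at the representatives $x'$ of $v$ and $x$ of $qx$ yields $x' \sqsubset x$, whence $v = qx' \in \Acc{\prec}$ by the assumption.

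I expect this last step to be the only real obstacle, and the point to be careful about is that it must be carried out as a genuine well-founded induction on the accessibility predicate $\Acc{\sqsubset}$ --- not through infinite descending chains --- so that it remains valid over an arbitrary geometric universe; this is precisely why well-foundedness was defined via accessibility earlier. Everything else is formal manipulation in the internal logic, the only delicate recurring move being the choice-free elimination of the existentials $\exists x.\, qx = u$ supplied by surjectivity of $q$, which is legitimate because each goal into which they are eliminated is a proposition.
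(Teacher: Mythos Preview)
The paper states this lemma without proof, so there is no argument to compare against; your proposal is correct and is exactly the natural verification one would expect.  The preliminary observation that $\sqsubset$ respects the kernel of $q$ (via left and right compatibility) is the right way to reconcile the paper's phrase ``image of $\sqsubset$'' with the universal formulation in the displayed definition, and the well-foundedness step via the progressive subset $Q = \Compr{x}{qx\in\Acc{\prec}}$ is the standard constructive transport of accessibility along a surjection.
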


\begin{example}[Well-founded trees and the plump ordering]
  Let $\Mor[p]{E}{B}$ be a morphism in $\SCat$ and consider the corresponding
  polynomial endofunctor $\Mor[\TyPoly{p}]{\SCat}{\SCat}$ taking $X\in\SCat$ to $\TyPoly{p}X
  = \Sum{b:B}\Prod{e:p\brk{b}}X$. As $\SCat$ supports
  W-types~\cite[Proposition~3.6]{moerdijk-palmgren:2000}, we may form the
  initial algebra $\Mor[\sigma]{\TyPoly{p}\TyW{p}}{\TyW{p}}$ of this polynomial endofunctor.
  Following Fiore, Pitts, and Steenkamp~\cite[Example~5.4]{fiore-pitts-steenkamp:2021} we
  may equip $\TyW{p}$ as an intuitionistic well-founded
  preorder, letting $\prn{\sqsubseteq,\sqsubset}$ be the smallest relations closed under the
  following:
  \[
    \prn{\forall x : p\brk{a}. cx \sqsubset w} \to \sigma\prn{a,c} \sqsubseteq w
    \qquad
    \prn{\exists x : p\brk{a}. w \sqsubseteq cx}\to w \sqsubset \sigma\prn{a,c}
  \]

  The order above is adapted from Taylor~\cite{taylor:1996} and called the
  \emph{plump ordering} on $\TyW{p}$.  By \cref{lem:wf-preorder-skeleton} we have a
  weakly equivalent intuitionistic well-founded poset $\prn{\TyW{p}/\sim,
  \leq,\prec}$, and thus the classifying topos of filters on this poset carries
  a model of SGDT. It is not difficult to see by unrolling definitions that
  \cref{ex:topos-of-trees} is the special case of this construction for the
  family $\Mor[p]{E}{2}$ whose fibers are $\ObjInit{\SCat}$ and
  $\ObjTerm{\SCat}$.
\end{example}

\section{The universal property of multi-clock guarded recursion}\label{sec:multi-clock}

\subsection{Bagtopoi as partial products}

Given a geometric theory $\TThy$ over $\SCat$, there exists a geometric theory
$\ThyBag{\TThy}$ over $\SCat$ of families of $\TThy$-models indexed by an
object of $\SCat$. We will give an explicit description of $\ThyBag{\TThy}$ for
$\TThy$ a propositional geometric theory (\ie a theory with no sorts and only
nullary predicates).

\begin{definition}[Johnstone~\cite{johnstone:1992}]\label{def:bagthy}
  Let $\TThy$ be a propositional geometric theory. Then $\ThyBag{\TThy}$ is the theory
  with a single sort $\ClkSort$ together with:
  \begin{enumerate}
    \item for every proposition symbol $\phi$ of $\TThy$, a predicate $k : \ClkSort\mid \phi\brk{k} : \Con{prop}$;
    \item for every axiom $\cdot\mid\phi\vdash\psi$ of $\TThy$, an axiom $k:\ClkSort\mid \phi\brk{k}\vdash \psi\brk{k}$.
  \end{enumerate}
\end{definition}

\begin{definition}[Johnstone~{\cite[Proposition~B4.4.16]{johnstone:2002}}]
  Let $\ETop$ be an $\SCat$-topos. Let $\FibMor[p]{\ClTop{\ThyElt}}{\ClTop{\ThyObj}}$ be
  the generic \'etale morphism of $\SCat$-topoi projecting the underlying object from a
  pointed object. The \DefEmph{(lower) bagtopos} over $\ETop$ is the partial product
  $\Bag{\ETop} := \PProd{p}{\ETop}$.
\end{definition}

\begin{observation}[Johnstone {\cite[Proposition~B4.4.16]{johnstone:2002}}]
  For a geometric theory $\TThy$, the bagtopos $\Bag{\ClTop{\TThy}}$
  is the classifying $\SCat$-topos for the theory $\ThyBag{\TThy}$, \ie we have
  $\Bag{\ClTop{\TThy}} \simeq \ClTop{\ThyBag{\TThy}}$.
\end{observation}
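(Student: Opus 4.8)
The plan is to establish the equivalence via the (bicategorical) Yoneda lemma: I will show that for every $\SCat$-topos $\XTop$ the category of $\SCat$-topos morphisms $\XTop\to\Bag{\ClTop{\TThy}}$ is naturally equivalent to the category of $\SCat$-topos morphisms $\XTop\to\ClTop{\ThyBag{\TThy}}$, so that the two topoi represent the same pseudofunctor on $\TOP{\SCat}$ and are therefore equivalent. First I would unwind the left-hand side through the universal property of the partial product $\Bag{\ClTop{\TThy}}=\PProd{p}{\ClTop{\TThy}}$: a morphism $\XTop\to\Bag{\ClTop{\TThy}}$ is the same as a partial product cone over $(p,\ClTop{\TThy})$ with vertex $\XTop$, \ie a pair $(u,\epsilon)$ where $u\colon\XTop\to\ClTop{\ThyObj}$ and $\epsilon\colon u^*\ClTop{\ThyElt}\to\ClTop{\TThy}$ is a morphism out of the pullback of $p$ along $u$.

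Next I would identify each component concretely. (1) By the universal property of $\ClTop{\ThyObj}$, the morphism $u$ is precisely an object $K\in\Sh{\XTop}$. (2) Since $p\colon\ClTop{\ThyElt}\to\ClTop{\ThyObj}$ is the generic \'etale morphism --- it is classified by the generic object of $\ClTop{\ThyObj}$, of which $K$ is the restriction along $u$ --- its pullback along $u$ is the \'etale $\XTop$-topos $\Disc{K}$, so $\Sh{u^*\ClTop{\ThyElt}}\simeq\Sh{\XTop}\downarrow K$. (3) Hence, viewing $\Disc{K}$ as an $\SCat$-topos through the composite structure morphism, the universal property of the classifying topos $\ClTop{\TThy}$ turns $\epsilon$ into exactly a $\TThy$-model $M$ in $\Sh{\XTop}\downarrow K$. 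So morphisms $\XTop\to\Bag{\ClTop{\TThy}}$ correspond to pairs $(K,M)$ with $K\in\Sh{\XTop}$ and $M$ a $\TThy$-model in the slice $\Sh{\XTop}\downarrow K$.

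Now I would match this with a $\ThyBag{\TThy}$-model in $\Sh{\XTop}$. When $\TThy$ is propositional this is immediate from \cref{def:bagthy}: as $\TThy$ has no sorts, a $\TThy$-model in $\Sh{\XTop}\downarrow K$ is just a choice, for each proposition symbol $\phi$ of $\TThy$, of a subobject of the terminal object of $\Sh{\XTop}\downarrow K$ --- equivalently a subobject $S_\phi\hookrightarrow K$ in $\Sh{\XTop}$ --- with $S_\phi\le S_\psi$ for each axiom $\phi\vdash\psi$ of $\TThy$; together with the object $K$ interpreting the sort $\ClkSort$, this is exactly the data of a $\ThyBag{\TThy}$-model in $\Sh{\XTop}$, and conversely. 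I would then upgrade this object-level bijection to an equivalence of categories by checking that a morphism of partial product cones $(\alpha,\beta)\colon(u,\epsilon)\to(u',\epsilon')$ --- with $\alpha$ a $2$-cell $u\Rightarrow u'$, \ie a morphism $h\colon K\to K'$ in $\Sh{\XTop}$; $\alpha_!\colon\Disc{K}\to\Disc{K'}$ the induced morphism of \'etale topoi; and $\beta$ a $2$-cell $\epsilon\Rightarrow\alpha_!;\epsilon'$, \ie a $\TThy$-model homomorphism $M\to h^*M'$ into the pullback model --- is precisely a $\ThyBag{\TThy}$-model homomorphism $(K,M)\to(K',M')$, and that the whole assignment is pseudonatural in $\XTop$. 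For a general geometric theory $\TThy$ the same argument works verbatim once step (3) is read against Johnstone's general bag theory, which adds to $\ClkSort$ the $\ClkSort$-indexed families of the sorts, operations, and sequents of $\TThy$.

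The only step that is not pure transport of universal properties --- and hence the main obstacle --- is step (2): one must know that pulling $p$ back along $u$ yields the \'etale $\XTop$-topos $\Disc{K}$ with $\Sh{\Disc{K}}\simeq\Sh{\XTop}\downarrow K$, and that the cocartesian-lift $1$-cell $\alpha_!$ occurring in the definition of a cone morphism is the expected geometric morphism of slice topoi induced by $h\colon K\to K'$. This rests on the stability and classification properties of \'etale morphisms of topoi; granting them, the argument recovers \cite[Proposition~B4.4.16]{johnstone:2002}.
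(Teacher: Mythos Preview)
The paper does not provide its own proof of this statement: it is recorded as an \emph{observation} and attributed to Johnstone~\cite[Proposition~B4.4.16]{johnstone:2002} without further argument. Your proposal is therefore not competing against anything in the paper itself, and what you have written is precisely the expected unwinding of the two universal properties involved --- the partial product on one side and the classifying topos on the other --- matched up via Yoneda. The identifications you make are the correct ones: $u$ with an object $K\in\Sh{\XTop}$; the pullback $u^*\ClTop{\ThyElt}$ with the \'etale topos $\Disc{K}$ (this is exactly the sense in which $p$ is the \emph{generic} \'etale morphism); and $\epsilon$ with a $\TThy$-model in the slice $\Sh{\XTop}\downarrow K$. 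Your treatment of cone morphisms as $\ThyBag{\TThy}$-model homomorphisms is also right. One small imprecision: when you write ``with $S_\phi\le S_\psi$ for each axiom $\phi\vdash\psi$'', bear in mind that in a propositional geometric theory the $\phi,\psi$ appearing in axioms are generally compound formulae built from the proposition symbols by finite conjunction and set-indexed disjunction, so the condition is that the corresponding lattice-theoretic combinations of the $S_\phi$ satisfy the inclusion --- but this is a cosmetic point and the substance of your argument is sound.
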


\subsection{A universal property for Bizjak and M\o{}gelberg's clocks}

Let $\mathbb{K}$ be the $\SCat$-category having as its objects pairs $\prn{U, f
\in \omega^U}$ with $U$ a finite $\SCat$-cardinal, while morphisms $\prn{U, f} \to
\prn{V , g}$ are given by functions $h : U \to V$ such that $h;g \leq f\in
\omega^U$ in the pointwise ordering. The internal category $\mathbb{K}$ above is exactly
the \emph{category of time objects} written $\mathbb{T}$ by Bizjak and
M\o{}gelberg~\cite{bizjak-mogelberg:2020}, relativized over an arbitrary
geometric universe. It was left unmentioned by \opcit that $\mathbb{K}$ is the
free finite coproduct completion of $\OpCat{\omega}$, an observation that sets
the stage for our present results.

We observe that $\OpCat{\mathbb{K}}$ is the free finite product completion $\FP{\omega}$ of $\omega$; thus we define the
\DefEmph{Bizjak--M\o{}gelberg topos} over $\SCat$ to be the algebraic $\SCat$-topos
$\BMTop \coloneqq \PrTop{\FP{\omega}}$ whose total geometric universe is
$\Sh{\BMTop} = \brk{\mathbb{K},\SCat}$.

\begin{lemma}[Johnstone {\cite[Example~B4.4.17]{johnstone:2002}}]
  If $\ETop$ is the algebraic $\SCat$-topos presented by an $\SCat$-category
  $\mathbb{C}$, then the bagtopos $\Bag{\ETop}$ admits an algebraic
  presentation by $\FP{\mathbb{C}}$; in other words, we have
  $\Bag{\PrTop{\mathbb{C}}}\simeq\PrTop{\FP{\mathbb{C}}}$.
\end{lemma}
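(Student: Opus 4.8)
The plan is to establish the equivalence through the universal property of the partial product, working relatively over the object classifier $\ClTop{\ThyObj}$. Write $\mathbb{O} = \FP{\TrmCat}$ as in \cref{ex:thy-obj}, so that $\Sh{\ClTop{\ThyObj}}$ is the presheaf logos $\Psh[\SCat]{\mathbb{O}}$, and let $G\in\Sh{\ClTop{\ThyObj}}$ denote the generic object --- the presheaf on $\mathbb{O}$ sending a finite cardinal to itself. By the same calculation that identifies $\Sh{\ClTop{\ThyElt}}$ with $\Psh[\SCat]{\mathbb{O}\downarrow\gl{*}}$, the generic \'etale morphism $\FibMor[p]{\ClTop{\ThyElt}}{\ClTop{\ThyObj}}$ is the \'etale morphism presented by $G$; in particular, as a $\Sh{\ClTop{\ThyObj}}$-topos it is $\Disc{G}$. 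Both $\Bag{\PrTop{\mathbb{C}}} = \PProd{p}{\PrTop{\mathbb{C}}}$ and $\PrTop{\FP{\mathbb{C}}}$ carry canonical morphisms to $\ClTop{\ThyObj}$, and the plan is to show that, viewed through these morphisms as $\Sh{\ClTop{\ThyObj}}$-topoi, each is the algebraic $\Sh{\ClTop{\ThyObj}}$-topos presented by the internal category $\mathbb{C}^{G}$ --- the $G$-indexed power of the image of $\mathbb{C}$ in $\Sh{\ClTop{\ThyObj}}$, whose externalization sends a finite cardinal $n$ to the power $\mathbb{C}^{n}$.

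For $\PrTop{\FP{\mathbb{C}}}$: reading off the explicit description of the free finite product completion from the Construction above, an object of $\FP{\mathbb{C}}$ is a finite cardinal $\vrt{\Psi}$ together with an element $\partial_\Psi\in\mathbb{C}^{\vrt{\Psi}}$, and a morphism unwinds to a renaming of cardinals together with a morphism in the corresponding power of $\mathbb{C}$; comparison with the description of the Grothendieck construction in \cref{obs:algebraic-composition} exhibits $\FP{\mathbb{C}}$ as $\mathbb{O}\rtimes\mathbb{C}^{G}$. Hence, reading \cref{obs:algebraic-composition} backwards, the structure map $\Mor{\PrTop{\FP{\mathbb{C}}}}{\SCat}$ factors through $\PrTop{\mathbb{O}} = \ClTop{\ThyObj}$, with the leg $\Mor{\PrTop{\FP{\mathbb{C}}}}{\ClTop{\ThyObj}}$ the algebraic morphism presented by $\mathbb{C}^{G}$.

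For the bagtopos: from its definition as $\PProd{p}{\PrTop{\mathbb{C}}}$, Johnstone's polynomial reading $\PProd{p}{A} = \Sum{b:B}\Prod{e:p\brk{b}}A$ of the partial product, and the fact that $p$ is \'etale and equal to $\Disc{G}$ over $\ClTop{\ThyObj}$, the bagtopos is --- over $\ClTop{\ThyObj}$ --- the $G$-indexed power of $\PrTop{\mathbb{C}}$, formed as the Weil restriction along the \'etale projection $\Mor{\Disc{G}}{\ClTop{\ThyObj}}$. It then remains to observe that algebraic ($=$ presheaf) topoi are stable under such dependent products with the expected formula, so that this $G$-indexed power is the algebraic $\Sh{\ClTop{\ThyObj}}$-topos presented by the internal power $\mathbb{C}^{G}$. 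Concretely this rests on the product formula $\PrTop{\mathbb{D}}\times_{\SCat}\PrTop{\mathbb{D}'}\simeq\PrTop{\mathbb{D}\times_{\SCat}\mathbb{D}'}$ for presheaf $\SCat$-topoi --- valid because $\Psh[\SCat]{-}$ is the free $\SCat$-cocompletion and the tensor of free cocompletions is the free cocompletion of the product --- relativized along $\Mor{\Disc{G}}{\ClTop{\ThyObj}}$. Comparing with the previous paragraph, both $\Bag{\PrTop{\mathbb{C}}}$ and $\PrTop{\FP{\mathbb{C}}}$ are the algebraic $\Sh{\ClTop{\ThyObj}}$-topos presented by $\mathbb{C}^{G}$, whence $\Bag{\PrTop{\mathbb{C}}}\simeq\PrTop{\FP{\mathbb{C}}}$ over $\ClTop{\ThyObj}$, and a fortiori over $\SCat$.

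The step I expect to be the main obstacle is the stability claim in the last paragraph: that the dependent product of a presheaf topos along an \'etale morphism is again presented by the internal power of its site-category, \emph{and} that this identification is compatible with the canonical maps to $\ClTop{\ThyObj}$ on both sides. A secondary point needing care is getting the variances exactly right in $\FP{\mathbb{C}}\cong\mathbb{O}\rtimes\mathbb{C}^{G}$, bearing in mind that ``finite cardinal'' is always meant relative to the natural numbers object of $\SCat$, so that no classical facts about finite sets may be used. If one prefers to avoid the Weil-restriction machinery, the same equivalence can be extracted by checking the universal property directly: a morphism $\Mor{\XTop}{\PrTop{\FP{\mathbb{C}}}}$ is, by Diaconescu's theorem, a flat $\Sh{\XTop}$-valued functor on $\FP{\mathbb{C}}$; restriction along the forgetful functor $\Mor{\FP{\mathbb{C}}}{\mathbb{O}}$ extracts an object $D\in\Sh{\XTop}$, and the residual data is precisely a flat functor $\Mor{\mathbb{C}}{\Sh{\XTop}\downarrow D}$, \ie a morphism $\Mor{\Disc{D}}{\PrTop{\mathbb{C}}}$ over $\SCat$ --- exactly a partial product cone over $\prn{p,\PrTop{\mathbb{C}}}$ with vertex $\XTop$. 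Pseudonaturality in $\XTop$ then yields $\PrTop{\FP{\mathbb{C}}}\simeq\PProd{p}{\PrTop{\mathbb{C}}} = \Bag{\PrTop{\mathbb{C}}}$; the content is once more the bookkeeping lemma identifying flat functors on $\FP{\mathbb{C}}$ with pairs consisting of an object and a flat functor on $\mathbb{C}$ in the appropriate slice.
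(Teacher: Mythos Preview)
The paper does not supply its own proof of this lemma: it is stated with a bare citation to Johnstone's \emph{Sketches of an Elephant}, Example~B4.4.17, and used as a black box. There is therefore nothing in the paper to compare your argument against line-by-line.

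That said, your proposal is a sound reconstruction and in fact offers two related arguments. The first --- identifying both sides as the algebraic $\Sh{\ClTop{\ThyObj}}$-topos presented by the internal power $\mathbb{C}^{G}$ --- is the more conceptual route and aligns well with the paper's emphasis on the relative point of view and on \cref{obs:algebraic-composition}. The identification $\FP{\mathbb{C}}\simeq\mathbb{O}\rtimes\mathbb{C}^{G}$ is correct, though as you note the variances need care: in the paper's convention a morphism $\Mor{\Phi}{\Psi}$ in $\FP{\mathbb{C}}$ has its underlying renaming going $\Mor{\vrt{\Psi}}{\vrt{\Phi}}$, so $\FP{\mathbb{C}}$ is fibered over $\mathbb{O}$ (not $\OpCat{\mathbb{O}}$) and the semidirect product description must be read accordingly. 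The step you correctly flag as the crux --- that the dependent product of $\PrTop{\mathbb{C}}$ along the \'etale $\Mor{\Disc{G}}{\ClTop{\ThyObj}}$ is presented by $\mathbb{C}^{G}$ --- is exactly where the finiteness built into $G$ is used: the generic object has finite-cardinal fibres, so the product formula $\PrTop{\mathbb{D}}\times\PrTop{\mathbb{D}'}\simeq\PrTop{\mathbb{D}\times\mathbb{D}'}$ suffices stagewise, and no infinite products of topoi are needed.

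Your alternative via Diaconescu is closer in spirit to how Johnstone actually argues in the cited example, and is arguably the cleaner way to close the argument: it directly matches partial product cones with flat functors on $\FP{\mathbb{C}}$ without passing through Weil restriction. Either route is acceptable; the second requires less machinery to make fully rigorous.
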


\begin{corollary}[Universal property]
  The Bizjak--M\o{}gelberg topos $\BMTop$ is equivalent to the bagtopos
  $\Bag{\PrTop{\omega}}$, hence $\BMTop$ classifies the geometric theory
  $\ThyBag{\ThyFilt{\omega}}$. In other words, the category of morphisms of
  topoi $\Mor{\XTop}{\BMTop}$ is exactly the category of pairs $\prn{K,\phi}$
  where $K\in\Sh{\XTop}$ and $\phi\subseteq K\times \omega$ is an $K$-indexed family of filters on
  $\omega$ internal to $\Sh{\XTop}$.
\end{corollary}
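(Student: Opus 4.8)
The plan is to assemble the corollary from three ingredients already established in the excerpt: the identification of the Bizjak--M\o{}gelberg topos as an algebraic topos $\PrTop{\FP{\omega}}$, Johnstone's lemma that the free finite product completion presents the bagtopos of the algebraic topos it came from, and the earlier observation that $\Bag{\ClTop{\TThy}}\simeq\ClTop{\ThyBag{\TThy}}$. Concretely, I would first invoke the immediately preceding lemma (Johnstone, Example~B4.4.17) with $\mathbb{C} = \omega$ (viewed as an internal $\SCat$-poset) to conclude $\Bag{\PrTop{\omega}}\simeq\PrTop{\FP{\omega}} = \BMTop$. Then I would recall from \cref{lem:diaconescu:poset} and \cref{ex:topos-of-trees} that $\PrTop{\omega} = \ClTop{\ThyFilt{\omega}}$, so by the bagtopos--bagtheory observation, $\BMTop\simeq\Bag{\ClTop{\ThyFilt{\omega}}}\simeq\ClTop{\ThyBag{\ThyFilt{\omega}}}$.

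Next I would translate the abstract statement "$\BMTop$ classifies $\ThyBag{\ThyFilt{\omega}}$" into the concrete description of its points at a stage $\XTop$. By definition of a classifying topos, the category of morphisms $\Mor{\XTop}{\BMTop}$ is equivalent to the category of $\ThyBag{\ThyFilt{\omega}}$-models in $\Sh{\XTop}$. Unwinding \cref{def:bagthy} applied to $\TThy = \ThyFilt{\omega}$: a model consists of a sort interpreted as an object $K\in\Sh{\XTop}$, together with, for each proposition symbol $\gl{n}$ of $\ThyFilt{\omega}$, a subobject $\phi\brk{n}\hookrightarrow K$, subject to the bagged versions of the three axioms of \cref{ex:topos-of-trees} (monotonicity $\gl{m}\brk{k}\vdash\gl{n}\brk{k}$ for $m\leq n$, and inhabitation $\top\vdash\Disj{i\in\omega}\gl{i}\brk{k}$), fiberwise over $k:K$. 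Packaging the family $\brc{\phi\brk{n}}_{n\in\omega}$ as a single subobject $\phi\subseteq K\times\omega$, these axioms say exactly that $\phi$ is, internally to $\Sh{\XTop}$, a $K$-indexed family of filters on $\omega$ --- which, since $\omega$ is a total order, means an upward-closed inhabited subset of $\omega$ in each fiber, the downward-directedness clause being automatic. Morphisms of models are then exactly $\ThyBag{\ThyFilt{\omega}}$-homomorphisms, which restrict to maps $K\to K'$ compatible with the family structure, yielding the claimed category of pairs $\prn{K,\phi}$.

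The one step requiring a little care, rather than pure bookkeeping, is the application of the penultimate lemma: it is stated for an algebraic $\SCat$-topos presented by an $\SCat$-\emph{category} $\mathbb{C}$, whereas $\omega$ is a poset and $\PrTop{\omega}$ was introduced (in \cref{ex:topos-of-trees}) via $\Sh{\PrTop{\omega}}=\Psh[\SCat]{\omega}$. So I would note explicitly that a poset is in particular an internal category, that $\PrTop{\omega}$ in the sense of \cref{ex:topos-of-trees} agrees with $\PrTop{\mathbb{C}}$ for $\mathbb{C}=\omega$ in the sense of \cref{def:algebraic-topos}, and hence the lemma applies verbatim; the resulting presenting category is $\FP{\omega}=\OpCat{\mathbb{K}}$, matching the definition of $\BMTop$ given just above. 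I expect this compatibility check and the careful unwinding of \cref{def:bagthy} against \cref{ex:topos-of-trees} to be the only non-mechanical parts; everything else is a chain of equivalences already licensed by the cited results.
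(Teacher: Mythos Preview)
Your proposal is correct and matches the paper's intended argument: the corollary is stated without proof because it follows immediately from the preceding lemma $\Bag{\PrTop{\mathbb{C}}}\simeq\PrTop{\FP{\mathbb{C}}}$ at $\mathbb{C}=\omega$, the observation $\Bag{\ClTop{\TThy}}\simeq\ClTop{\ThyBag{\TThy}}$, and the identification $\PrTop{\omega}=\ClTop{\ThyFilt{\omega}}$ from \cref{lem:diaconescu:poset}. Your additional unwinding of \cref{def:bagthy} to describe points as pairs $\prn{K,\phi}$ is exactly the bookkeeping the paper leaves implicit.
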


\subsection{A universal property for Sterling and Harper's clocks}

Let $\CLK$ be the $\SCat$-category having as its objects pairs $\prn{U, f
\in \omega^U}$ with $U$ a finite, nonzero $\SCat$-cardinal, while morphisms
$\prn{U, f} \to \prn{V , g}$ are given by functions $h : U \to V$ such that
$h;f \leq g\in \omega^U$ in the pointwise ordering. Observe how $\CLK$ is exactly
the category of clocks described by Sterling and Harper~\cite{sterling-harper:2018:lics},
relativized over an arbitrary geometric universe.

\begin{remark}[Image factorization~{\cite[3.2.11--12]{anel-joyal:2021}}]
  Each morphism of topoi $\Mor[f]{\XTop}{\YTop}$ can be factored in a composition of a
  surjection followed by an embedding. If moreover $f$ is \'etale, then the components of
  its factorization are also \'etale.
\end{remark}

\begin{example}\label{ex:thy-inh-obj}
  The \emph{theory of an inhabited object} $\ThyInh$ over $\SCat$ has a single
  sort $\ClkSort$ together with the axiom $\vdash \exists k : \ClkSort. \top$.
  Let $\mathbb{C}$ be an internal $\SCat$-category, and let $\NE{\mathbb{C}}$
  be the full subcategory of $\FP{\mathbb{C}}$ whose objects are nonzero cardinals.
  Then $\Sh{\ClTop{\ThyInh}}$ is the category of $\SCat$-valued presheaves on $\NE{\TrmCat}$.
\end{example}

\begin{definition}
  Let $\TThy$ be a geometric theory. Then $\ThyIBag{\TThy}$ is obtained by extending
  $\ThyBag{\TThy}$ with an additional axiom requiring that the sort $\ClkSort$ is inhabited.
\end{definition}

\begin{definition}
  Let $\FibMor[e]{\ClTop{\ThyElt}}{\ClTop{\ThyInh}}$ be the surjective part of
  the image factorization for the generic \'etale morphism of $\SCat$-topoi.
  Then the \DefEmph{inhabited bagtopos} of an $\SCat$-topos $\ETop$ is the
  partial product $\IBag{\ETop} := \PProd{e}{\ETop}$.
\end{definition}

\begin{observation}
  For a geometric theory $\TThy$, the inhabited bagtopos $\IBag{\ClTop{\TThy}}$
  is the classifying $\SCat$-topos for the theory $\ThyIBag{\TThy}$, \ie we have
  $\IBag{\ClTop{\TThy}} \simeq \ClTop{\ThyIBag{\TThy}}$.
\end{observation}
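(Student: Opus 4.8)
The plan is to mirror the proof of the corresponding ``lower bagtopos'' observation (the unqualified $\ThyBag{\TThy}$ case), adjusting only for the extra inhabitation axiom. First I would recall that by the construction $\IBag{\ClTop{\TThy}} := \PProd{e}{\ClTop{\TThy}}$, the partial product along the \emph{surjective part} $\FibMor[e]{\ClTop{\ThyElt}}{\ClTop{\ThyInh}}$ of the image factorization of the generic \'etale morphism $\FibMor[p]{\ClTop{\ThyElt}}{\ClTop{\ThyObj}}$. By the universal property of the partial product read in the internal language of $\TOP{\SCat}$ (as in Johnstone's polynomial formula $\PProd{e}{\ClTop{\TThy}} = \Sum{b:B}\Prod{x : e[b]}\ClTop{\TThy}$), a morphism $\Mor{\XTop}{\IBag{\ClTop{\TThy}}}$ over $\SCat$ is the same data as a morphism $\Mor[u]{\XTop}{\ClTop{\ThyInh}}$ together with a morphism $\Mor{u^*\ClTop{\ThyElt}}{\ClTop{\TThy}}$. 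Since $\ClTop{\ThyInh}$ classifies inhabited objects (the theory $\ThyInh$ of \cref{ex:thy-inh-obj}) and $e$ presents the generic element of such an object, $u$ corresponds to an inhabited object $K\in\Sh{\XTop}$ and the second morphism corresponds, by pulling back along $K$, to a $K$-indexed family of $\TThy$-models internal to $\Sh{\XTop}$.

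Second I would observe that this is exactly the data classified by $\ThyIBag{\TThy}$: the sort $\ClkSort$ names the inhabited object $K$ (the inhabitation axiom of $\ThyIBag{\TThy}$ corresponding to $K$ being inhabited, i.e.\ to the factorization through the surjection $e$ rather than through $p$), and for each proposition symbol $\phi$ of $\TThy$ the predicate $k:\ClkSort\mid \phi\brk{k}$ together with the relativized axioms records precisely a $K$-indexed family of $\TThy$-models. This matching of classified data is functorial in $\XTop$ (2-naturally, since partial products are representing objects for a 2-fibration of cones), so it upgrades to an equivalence of categories of points, and hence by the 2-Yoneda lemma for classifying topoi to an equivalence $\IBag{\ClTop{\TThy}} \simeq \ClTop{\ThyIBag{\TThy}}$ of $\SCat$-topoi.

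Concretely the cleanest route is to reduce to the already-stated observation for $\ThyBag{\TThy}$. The only difference between $p$ and $e$ is that $e$ is the surjective part of the image factorization of $p$; correspondingly $\ClTop{\ThyInh}$ is the surjection-part of the factorization of $\FibMor[p]{\ClTop{\ThyElt}}{\ClTop{\ThyObj}}$, classifying \emph{inhabited} objects rather than arbitrary ones. Partial products are compatible with this change of base fibration in the expected way, so $\PProd{e}{\ClTop{\TThy}}$ is obtained from $\PProd{p}{\ClTop{\TThy}} = \Bag{\ClTop{\TThy}} \simeq \ClTop{\ThyBag{\TThy}}$ exactly by imposing inhabitedness of $\ClkSort$ — which is precisely the additional axiom distinguishing $\ThyIBag{\TThy}$ from $\ThyBag{\TThy}$.

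The main obstacle I expect is making the ``partial product along the surjective part of the image factorization'' argument rigorous: one must check that the generic object of the partial-product cone along $e$ really does come with a proof of inhabitedness (equivalently, that $e$ is still a cocartesian \'etale fibration so that the partial product exists and its internal-language description is valid), and that the image-factorization remark quoted just before the statement — that the components of the factorization of an \'etale morphism are again \'etale — is exactly what licenses writing $\ClTop{\ThyInh}$ as an \'etale topos over $\ClTop{\ThyObj}$ and hence $\PProd{e}{(-)}$ as a legitimate partial product. Once that bookkeeping is in place, the rest is a routine unwinding of universal properties exactly parallel to the unqualified bagtopos case.
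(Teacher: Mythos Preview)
Your proposal is essentially correct and is in fact more detailed than what the paper provides: the paper states this as an \emph{Observation} with no proof at all, treating it as immediate by analogy with the preceding observation for the unqualified bagtopos (which is itself just cited from Johnstone~[B4.4.16] without argument). So there is no ``paper's own proof'' to compare against; you have supplied one where the authors chose to leave it implicit.

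Your approach---unwinding the universal property of the partial product $\PProd{e}{\ClTop{\TThy}}$ and matching the resulting data (an inhabited object $K$ together with a $K$-indexed family of $\TThy$-models) against the axioms of $\ThyIBag{\TThy}$, or alternatively reducing to the $\ThyBag{\TThy}$ case and observing that passing from $p$ to its surjective part $e$ corresponds exactly to imposing the inhabitation axiom on $\ClkSort$---is the natural one and is correct. The technical worry you flag is the right one to flag: one needs $e$ to be a cocartesian fibration for the partial-product machinery as set up in the paper to apply. This is handled by the remark immediately preceding the statement (the image factorization of an \'etale morphism has \'etale components), since \'etale morphisms in $\TOP{\SCat}$ are automatically cocartesian fibrations; once that is noted, the rest is, as you say, routine unwinding parallel to the lower bagtopos case.
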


We observe that $\CLK\cong\NE{\omega}$; thus we define the
\DefEmph{Sterling--Harper topos} over $\SCat$ to be the algebraic $\SCat$-topos
$\SHTop \coloneqq \PrTop{\NE{\omega}}$ whose total geometric universe is
$\Sh{\SHTop} = \Psh[\SCat]{\prn{\NE{\omega}}}$.

\begin{lemma}
  If $\ETop$ is the algebraic $\SCat$-topos presented by an $\SCat$-category
  $\mathbb{C}$, then the inhabited bagtopos $\IBag{\ETop}$ admits an algebraic
  presentation by $\NE{\mathbb{C}}$; in other words, we have
  $\IBag{\PrTop{\mathbb{C}}}\simeq\PrTop{\NE{\mathbb{C}}}$.
\end{lemma}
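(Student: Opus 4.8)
The plan is to imitate Johnstone's computation of the ordinary bagtopos $\Bag{\PrTop{\mathbb{C}}}\simeq\PrTop{\FP{\mathbb{C}}}$ (\cite[Example~B4.4.17]{johnstone:2002}, recorded above for the ordinary bagtopos), carrying along the passage from arbitrary finite $\SCat$-cardinals to nonzero ones that distinguishes $\NE{\mathbb{C}}$ from $\FP{\mathbb{C}}$. First I would record that $\IBag{\PrTop{\mathbb{C}}}=\PProd{e}{\PrTop{\mathbb{C}}}$, where $\Mor[e]{\ClTop{\ThyElt}}{\ClTop{\ThyInh}}$ is the surjective part of the image factorization $p = m\circ e$ of the generic \'etale morphism $\Mor[p]{\ClTop{\ThyElt}}{\ClTop{\ThyObj}}$. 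Since $p$ is \'etale, so are $e$ and $m$ by image factorization of topos morphisms~\cite[3.2.11--12]{anel-joyal:2021}; in particular $e$ is a cocartesian fibration, so that $\PProd{e}{\PrTop{\mathbb{C}}}$ is defined. Under the equivalence $\Sh{\ClTop{\ThyInh}}\simeq\Psh[\SCat]{\NE{\TrmCat}}$ of \cref{ex:thy-inh-obj} and the description of $\ClTop{\ThyObj}$ in \cref{ex:thy-obj}, the embedding $m$ corresponds to the inclusion of $\NE{\TrmCat}$ as the full subcategory of $\FP{\TrmCat}$ on the nonzero cardinals.

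Next I would unwind the universal property of the partial product. For a test $\SCat$-topos $\XTop$, a partial product cone over $\prn{e,\PrTop{\mathbb{C}}}$ with vertex $\XTop$ consists of a morphism $\Mor[u]{\XTop}{\ClTop{\ThyInh}}$ --- equivalently, by the universal property of $\ClTop{\ThyInh}$, an inhabited object $K\in\Sh{\XTop}$ --- together with a morphism $\Mor[\epsilon]{u^{*}\ClTop{\ThyElt}}{\PrTop{\mathbb{C}}}$ of $\SCat$-topoi. Since $e$ is \'etale and $u$ classifies $K$, pullback-stability of \'etale morphisms identifies $u^{*}\ClTop{\ThyElt}$ with the \'etale $\SCat$-topos $\Disc{K}$, so that $\Sh{\Disc{K}}=\Sh{\XTop}\downarrow K$ structured over $\SCat$ through $\Sh{\XTop}$; and since $\PrTop{\mathbb{C}}$ classifies internal flat functors on $\mathbb{C}$ (Diaconescu~\cite{diaconescu:1975}), $\epsilon$ is the same datum as a flat $\SCat$-functor on $\mathbb{C}$ internal to $\Sh{\XTop}\downarrow K$, \ie a $K$-indexed family of internal flat functors on $\mathbb{C}$. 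So a partial product cone at $\XTop$ is exactly an inhabited object $K\in\Sh{\XTop}$ together with a $K$-indexed family of flat $\SCat$-functors on $\mathbb{C}$ internal to $\Sh{\XTop}$.

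I would then apply the bookkeeping from Johnstone's computation: an object $K\in\Sh{\XTop}$ equipped with a $K$-indexed family of internal flat functors on $\mathbb{C}$ is precisely a flat $\SCat$-functor on $\FP{\mathbb{C}}$ internal to $\Sh{\XTop}$ --- its value at an object $\prn{U,\partial}$ of $\FP{\mathbb{C}}$ assembling the $U$ selected family members over $K$, as there --- and under this correspondence the inhabitedness of $K$ matches the requirement that the flat functor be supported on the nonzero cardinals, \ie it becomes the datum of a flat $\SCat$-functor on the full subcategory $\NE{\mathbb{C}}\subseteq\FP{\mathbb{C}}$. By Diaconescu once more, these flat functors are exactly the morphisms $\Mor{\XTop}{\PrTop{\NE{\mathbb{C}}}}$. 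As all of these identifications are natural in $\XTop$ in the appropriate 2-categorical sense, $\PrTop{\NE{\mathbb{C}}}$ represents the 2-fibration of partial product cones for $\prn{e,\PrTop{\mathbb{C}}}$, \ie $\IBag{\PrTop{\mathbb{C}}}=\PProd{e}{\PrTop{\mathbb{C}}}\simeq\PrTop{\NE{\mathbb{C}}}$.

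The step I expect to be the main obstacle is this last matching: carrying out the correspondence between $K$-indexed families of flat functors on $\mathbb{C}$ and flat functors on $\FP{\mathbb{C}}$ relatively and constructively over $\SCat$, and checking that inhabitedness of $K$ cuts it down to flatness on $\NE{\mathbb{C}}$ precisely --- one must check that cofilteredness of the relevant category of elements transfers in both directions, and that a flat functor on $\NE{\mathbb{C}}$ does not silently extend to $\FP{\mathbb{C}}$ unless $K$ is inhabited. The governing sanity check is that the ``empty bag'', which under the correspondence for ordinary bags is the flat functor represented by $\ObjTerm{\FP{\mathbb{C}}}$, restricts along $\NE{\mathbb{C}}\hookrightarrow\FP{\mathbb{C}}$ to the everywhere-initial presheaf, which is not flat; hence the empty bag is correctly excluded from $\PrTop{\NE{\mathbb{C}}}$. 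Should a self-contained treatment of this step prove unwieldy, one may instead cite \cite[Example~B4.4.17]{johnstone:2002} for the ordinary bagtopos and argue that the embedding $\PrTop{\NE{\mathbb{C}}}\hookrightarrow\PrTop{\FP{\mathbb{C}}}\simeq\Bag{\PrTop{\mathbb{C}}}$ induced by the full inclusion $\NE{\mathbb{C}}\hookrightarrow\FP{\mathbb{C}}$ has the same essential image as the morphism $\Mor{\IBag{\PrTop{\mathbb{C}}}}{\Bag{\PrTop{\mathbb{C}}}}$ induced by $m$.
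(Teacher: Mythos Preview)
The paper states this lemma without proof; it is placed immediately after the analogous result for the ordinary bagtopos, which is simply cited as Johnstone~\cite[Example~B4.4.17]{johnstone:2002}, and the inhabited variant is evidently meant to be read as the same argument \emph{mutatis mutandis}. Your proposal is therefore exactly the intended route: unwind the partial product $\PProd{e}{\PrTop{\mathbb{C}}}$ along the lines of Johnstone's computation, using that $e$ is \'etale (hence a cocartesian fibration) and that the base $\ClTop{\ThyInh}$ is presented by $\NE{\TrmCat}$.

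Your identification of the obstacle is accurate and honestly stated. The clean way around a bespoke flatness computation is the alternative you sketch at the end: pull back the square defining $\Bag{\PrTop{\mathbb{C}}}$ along the embedding $\Mor[m]{\ClTop{\ThyInh}}{\ClTop{\ThyObj}}$. Partial products in $\TOP{\SCat}$ are stable under change of base along the fibration leg, so $\PProd{e}{\PrTop{\mathbb{C}}}$ is the pullback of $\PProd{p}{\PrTop{\mathbb{C}}}\simeq\PrTop{\FP{\mathbb{C}}}$ along $m$; and since $m$ is the algebraic morphism presented by the full inclusion $\NE{\TrmCat}\hookrightarrow\FP{\TrmCat}$, this pullback is the algebraic topos presented by $\FP{\mathbb{C}}\times_{\FP{\TrmCat}}\NE{\TrmCat}\simeq\NE{\mathbb{C}}$. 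This repackages the ``inhabitedness $\Leftrightarrow$ support on nonzero cardinals'' step as a base-change of sites rather than a direct comparison of categories of elements, and avoids the cofilteredness bookkeeping you were worried about.
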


\begin{corollary}[Universal property]
  The Sterling--Harper topos $\SHTop$ is equivalent to the inhabited bagtopos
  $\IBag{\PrTop{\omega}}$, hence $\SHTop$ classifies the geometric theory
  $\ThyIBag{\ThyFilt{\omega}}$. In other words, the category of morphisms of
  topoi $\Mor{\XTop}{\SHTop}$ is exactly the category of pairs $\prn{K,\phi}$
  where $K\in\Sh{\XTop}$ is inhabited and $\phi\subseteq K\times \omega$ is a
  $K$-indexed family of filters on $\omega$ internal to $\Sh{\XTop}$.
\end{corollary}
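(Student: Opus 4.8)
The plan is to chain together the equivalences already in hand and then read off the universal property from the general theory of classifying topoi. First I would unwind the definitions: by construction $\SHTop = \PrTop{\NE{\omega}}$, and we have just observed $\CLK\cong\NE{\omega}$, so the first assertion reduces to identifying $\PrTop{\NE{\omega}}$ with $\IBag{\PrTop{\omega}}$. This is exactly the preceding lemma that $\IBag{\PrTop{\mathbb{C}}}\simeq\PrTop{\NE{\mathbb{C}}}$ for an internal $\SCat$-category $\mathbb{C}$, instantiated at the internal poset $\mathbb{C}=\omega$; thus $\SHTop\simeq\IBag{\PrTop{\omega}}$.

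Next I would invoke \cref{lem:diaconescu:poset} to rewrite $\PrTop{\omega}$ as the classifying topos $\ClTop{\ThyFilt{\omega}}$ of filters on the poset $\omega$. Since $\ThyFilt{\omega}$ is a propositional geometric theory, the theory $\ThyBag{\ThyFilt{\omega}}$ --- and hence its inhabited variant $\ThyIBag{\ThyFilt{\omega}}$ --- is defined and explicitly described by \cref{def:bagthy} together with the inhabitation axiom, and the observation that $\IBag{\ClTop{\TThy}}\simeq\ClTop{\ThyIBag{\TThy}}$ then gives
\[
  \SHTop \;\simeq\; \IBag{\PrTop{\omega}} \;\simeq\; \IBag{\ClTop{\ThyFilt{\omega}}} \;\simeq\; \ClTop{\ThyIBag{\ThyFilt{\omega}}},
\]
which is the second assertion.

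For the third assertion I would appeal to the fundamental property of classifying topoi: morphisms $\Mor{\XTop}{\ClTop{\ThyIBag{\ThyFilt{\omega}}}}$ are the same as $\ThyIBag{\ThyFilt{\omega}}$-models in $\Sh{\XTop}$. Reading off \cref{def:bagthy}, such a model consists of an interpretation of the sort $\ClkSort$ as an object $K\in\Sh{\XTop}$; for each $n\in\omega$ a predicate $k:\ClkSort\mid\gl{n}\brk{k}$, \ie collectively a subobject $\phi\subseteq K\times\omega$; the translated axioms of $\ThyFilt{\omega}$, which assert that $\phi$ is a filter on $\omega$ fiberwise over $K$; and the extra axiom $\vdash\exists k:\ClkSort.\,\top$, asserting that $K$ is inhabited. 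Matching $\ThyIBag{\ThyFilt{\omega}}$-model homomorphisms to the evident morphisms of such pairs $\prn{K,\phi}$ completes the identification.

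I do not expect a genuine obstacle here: the argument is an assembly of prior results, essentially identical to the one for the Bizjak--M\o{}gelberg topos in the previous subsection, with the sole addition of the inhabitation clause. The one step warranting care is the last one: $\ThyFilt{\omega}$ is a geometric theory \emph{over} $\SCat$, with its family of propositions indexed by the object $\omega\in\SCat$ rather than by a set, so the phrase ``$K$-indexed family of filters on $\omega$'' must be parsed in the internal language of $\Sh{\XTop}$ viewed as an $\SCat$-topos; verifying that the translated bag axioms cut out precisely the fiberwise-filter condition (and not something weaker) is the only calculation that is not purely formal.
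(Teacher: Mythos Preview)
Your proposal is correct and follows exactly the route the paper intends: the corollary is left unproved in the text because it is meant to be the immediate chain of equivalences you wrote down, instantiating the preceding lemma at $\mathbb{C}=\omega$, invoking \cref{lem:diaconescu:poset}, and then the observation $\IBag{\ClTop{\TThy}}\simeq\ClTop{\ThyIBag{\TThy}}$, just as in the parallel Bizjak--M\o{}gelberg corollary. Your closing caveat about reading the fiberwise-filter condition internally to $\Sh{\XTop}$ as an $\SCat$-topos is the right point of care, but it is handled by the general classifying-topos machinery and is not a gap.
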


\section{Lifting guarded recursion to the bagtopos}\label{sec:lifting-to-the-bagtopos}

In this section, we combine the relative point of view with our general
stability results for models of SGDT to obtain an abstract proof that the
Bizjak--M\o{}gelberg topos $\BMTop$ carries a model of multi-clock guarded
recursion. The results of this section carry over \emph{mutatis mutandis} to
the other variants of the bagtopos considered in the preceeding section.

\begin{observation}
  The universal property of the Bizjak--M\o{}gelberg topos $\BMTop$ as the
  partial product $\PProd{p}{\PrTop{\omega}}$
  determines a
  morphism of $\SCat$-topoi $\Mor[\epsilon]{\Disc{\ClkSort}}{\PrTop{\omega}}$ in the following
  configuration, where $\Mor{\Disc{\ClkSort}}{\BMTop}$ is the \'etale morphism
  corresponding to the generic object:
  \[
    \begin{tikzpicture}[diagram]
      \SpliceDiagramSquare<sq/>{
        nw = \Disc{\ClkSort},
        sw = \BMTop,
        se = \ClTop{\ThyObj},
        ne = \ClTop{\ThyElt},
        south = \ClkSort,
        east = p,
        west = \ClkSort^*p,
        north = \overline{\ClkSort},
        nw/style = pullback,
      }
      \node (omega) [left = of sq/nw] {$\PrTop{\omega}$};
      \draw[->,exists] (sq/nw) to node [above] {$\epsilon$} (omega);
    \end{tikzpicture}
  \]
\end{observation}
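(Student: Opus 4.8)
The plan is to read off $\epsilon$ and the entire displayed configuration directly from the universal property of the partial product, instantiated at the identity $1$-cell. First I would invoke the preceding \emph{Universal property} corollary, which identifies $\BMTop$ with the lower bagtopos $\Bag{\PrTop{\omega}}$, by definition the partial product $\PProd{p}{\PrTop{\omega}}$ for the generic \'etale morphism $\FibMor[p]{\ClTop{\ThyElt}}{\ClTop{\ThyObj}}$ inside the finitely complete $2$-category $\TOP{\SCat}$. By definition of the partial product as a representing object, for every $\SCat$-topos $C$ the category $\TOP{\SCat}\prn{C,\BMTop}$ is equivalent, pseudo-naturally in $C$, to the category of partial product cones over $\prn{p,\PrTop{\omega}}$ with vertex $C$. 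Evaluating this equivalence at $C \coloneqq \BMTop$ and transporting the identity $\Idn{\BMTop}$ through it yields a distinguished ``generic'' partial product cone at stage $\BMTop$; its components are exactly a classifying $1$-cell $\Mor[u]{\BMTop}{\ClTop{\ThyObj}}$, a pullback square exhibiting $u^*p$, and a $1$-cell $\Mor[\epsilon]{u^*\ClTop{\ThyElt}}{\PrTop{\omega}}$. This is already the displayed configuration, modulo naming the objects.

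Next I would identify those components with the ones in the diagram. The leg $\Mor[u]{\BMTop}{\ClTop{\ThyObj}}$ names, via the universal property of $\ClTop{\ThyObj}$ as the classifier of the theory of an object (\cref{ex:thy-obj}), an object of $\Sh{\BMTop}$; matching the algebraic presentation $\Sh{\BMTop} = \brk{\mathbb{K},\SCat}$ with the corollary's universal property shows this object is the generic one, \ie the object of clocks $\ClkSort$, so $u = \ClkSort$. Since $p$ is \'etale --- presented over $\ClTop{\ThyObj}$ by the underlying object of the generic pointed object, which is the generic object itself --- and \'etale morphisms are stable under pullback (the pullback being presented by the image of the sheaf under the inverse-image functor), the pullback $\ClkSort^*p$ is the \'etale morphism $\Mor{\Disc{\ClkSort}}{\BMTop}$ corresponding to $\ClkSort \in \Sh{\BMTop}$. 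This matches the pullback square and delivers the desired $\Mor[\epsilon]{\Disc{\ClkSort}}{\PrTop{\omega}}$. For good measure I would note that transporting $\epsilon$ along $\Sh{\Disc{\ClkSort}} \simeq \Sh{\BMTop}\downarrow\ClkSort$ and the classifying property of $\PrTop{\omega}$ from \cref{lem:diaconescu:poset} and \cref{ex:topos-of-trees} exhibits the generic $\ClkSort$-indexed family of filters on $\omega$, although this extra identification is not needed for the statement.

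I do not expect a genuine obstacle --- the whole argument is an unwinding of a universal property --- but the step that deserves the most care is the $2$-categorical Yoneda bookkeeping: the representability equivalence is an equivalence of categories pseudo-natural in $C$, so ``evaluate at the identity'' must be made precise by transporting $\Idn{\BMTop}$ across the equivalence at $C = \BMTop$ and then using pseudo-naturality to confirm that precomposing the generic cone with an arbitrary $\Mor[g]{C}{\BMTop}$ recovers the cone classified by $g$. The only other thing to check is that the ``generic object'' leg of the generic cone really is the classifying map of the object of clocks and not some reindexing of it, which is immediate from matching $\BMTop = \PrTop{\FP{\omega}}$ against the bagtopos construction; the standing hypotheses presupposed by the partial product (finite completeness of $\TOP{\SCat}$, and $p$ being a cocartesian fibration) are inherited from the cited results of Johnstone.
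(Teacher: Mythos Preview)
Your proposal is correct and matches the paper's intent: the Observation is stated without proof because it is precisely the unwinding of the partial product's representability at the identity $1$-cell, which is exactly what you spell out. The paper treats the resulting generic cone as given by definition, so your elaboration (including the identification of the base leg with the classifying map of $\ClkSort$ and of the pullback with the \'etale morphism $\Disc{\ClkSort}\to\BMTop$) simply makes explicit what the paper leaves implicit.
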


Recall that a point of $\PrTop{\omega}$ is an $\omega$-filter and a point of
$\BMTop$ is an indexed family of $\omega$-filters, and moreover a point of
$\Disc{\ClkSort}$ is such an family equipped with a distinguished index; then the
morphism $\Mor[\epsilon]{\Disc{\ClkSort}}{\PrTop{\omega}}$ should be thought of as taking
that distinguished index to the corresponding filter.

\begin{lemma}\label{lem:clk-psh}
  The morphism $\Mor[\epsilon]{\Disc{\ClkSort}}{\PrTop{\omega}}$ is an
  \emph{algebraic} morphism of $\SCat$-topoi that presents
  $\Disc{\ClkSort}$ as the algebraic $\Sh{\PrTop{\omega}}$-topos $\PrTop{\mathbb{C}}$ for an internal
  $\Sh{\PrTop{\omega}}$-category $\mathbb{C}$ with a terminal object.
\end{lemma}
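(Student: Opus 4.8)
The plan is to exhibit the base object $\ClTop{\ThyObj}$ of the generic \'etale morphism $p$ as an algebraic $\SCat$-topos, then transport the whole pullback square of the defining observation into the relative setting over $\Sh{\PrTop{\omega}}$, where $\epsilon$ becomes an $\Sh{\PrTop{\omega}}$-topos. Recall from \cref{ex:thy-obj} that $\Sh{\ClTop{\ThyObj}} = \Psh[\SCat]{\mathbb{O}}$ where $\mathbb{O} = \FP{\TrmCat}$, so $p$ itself is an algebraic morphism presented by a suitable internal category in $\Sh{\PrTop{\omega}}$ once we view everything over the base $\PrTop{\omega}$ rather than $\SCat$. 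The point of the argument is that pulling back an algebraic \'etale morphism along a morphism of topoi yields an algebraic morphism, and then applying \cref{conv:relative} repackages $\epsilon$ as an $\Sh{\PrTop{\omega}}$-topos.

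\medskip

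\noindent\textbf{Step 1.} First I would identify the $\SCat$-topos $\PrTop{\omega}$ as a model of the theory of an object: a point of $\PrTop{\omega}$ is an $\omega$-filter, hence in particular an object of $\Sh{\PrTop{\omega}}$, so by the universal property of $\ClTop{\ThyObj}$ (\cref{ex:thy-obj}) there is a classifying morphism $\Mor[\chi]{\PrTop{\omega}}{\ClTop{\ThyObj}}$. The partial-product square from the observation preceding the lemma says precisely that $\Disc{\ClkSort}$ is the pullback $\chi^*\prn{\ClTop{\ThyElt}}$ of the generic \'etale morphism $p$ along $\chi$; the top edge $\epsilon$ is then the induced map to $\PrTop{\omega}$ obtained from the partial-product cone structure.

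\medskip

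\noindent\textbf{Step 2.} Next I would recast $p$ in the relative setting. Viewing $\Sh{\PrTop{\omega}}$ as the base geometric universe via \cref{conv:relative}, the classifying morphism $\chi$ exhibits $\ClTop{\ThyObj}$ (pulled back along $\chi$) as an algebraic $\Sh{\PrTop{\omega}}$-topos presented by an internal $\Sh{\PrTop{\omega}}$-category $\mathbb{C}$. Concretely $\mathbb{C}$ should be computed as the free finite-product completion of the relevant data over $\Sh{\PrTop{\omega}}$: the fiber of the pulled-back generic-object topos is $\FP{\TrmCat}$ performed internally to $\Sh{\PrTop{\omega}}$, but with the generic object instantiated at the indexed family of $\omega$-filters that $\chi$ classifies. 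By \cref{obs:algebraic-composition} and its \'etale analogue (since $p$ is \'etale, hence algebraic via $\Disc{A} = \PrTop{\Con{el}\,A}$), the pullback of an algebraic morphism along any morphism of topoi is again algebraic, so $\Mor[\epsilon]{\Disc{\ClkSort}}{\PrTop{\omega}}$ is algebraic and presented by this $\mathbb{C}$.

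\medskip

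\noindent\textbf{Step 3.} Finally I would verify that $\mathbb{C}$ has a terminal object. Since $\epsilon$ is in fact an \'etale morphism presented by the generic object $B\in\Sh{\PrTop{\omega}}$, we have $\Disc{\ClkSort}_\epsilon \simeq \Disc{B} = \PrTop{\Con{el}\,B}$ over $\Sh{\PrTop{\omega}}$, so we may take $\mathbb{C} = \Con{el}\,B$. The category of elements $\Con{el}\,B$ has a terminal object exactly when $B$ has a terminal global element — equivalently when $B$ carries a ``largest'' point — which holds here because the indexed family of $\omega$-filters classified by the structure map $\Mor[\PrTop{\omega}]{\SCat}{\Sh{\PrTop{\omega}}}$ is the \emph{generic} filter, and the generic $\omega$-filter has a maximal point (corresponding to the cofinite filter, the terminal object of the filter poset since $\omega$ is a total order, cf.\ \cref{ex:topos-of-trees}). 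Thus $\Con{el}\,B$ has a terminal object.

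\medskip

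\noindent\textbf{The hard part} will be pinning down exactly which internal $\Sh{\PrTop{\omega}}$-category presents $\epsilon$ and checking that it genuinely has a terminal object rather than merely a weakly terminal one: the subtlety is that ``$\epsilon$ takes the distinguished index to its corresponding filter'' must be matched against the Grothendieck-construction bookkeeping of \cref{obs:algebraic-composition}, and one must confirm that the relevant object of $\Sh{\PrTop{\omega}}$ really does have a genuine terminal element internally (not just externally). I expect this to come down to a direct computation with the site $\omega$ and the generic filter, using that $\omega$ is a total order so that filters on it form a poset with a top element — which is what ultimately licenses the terminal-object claim that will be needed downstream for global adequacy via \cref{lem:pw-global-adequacy}.
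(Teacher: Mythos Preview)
Your Step~1 misreads the partial-product diagram: the pullback square has $\BMTop$ at the south-west corner, not $\PrTop{\omega}$, so $\Disc{\ClkSort}$ is the pullback of $p$ along $\Mor[\ClkSort]{\BMTop}{\ClTop{\ThyObj}}$, not along any $\Mor[\chi]{\PrTop{\omega}}{\ClTop{\ThyObj}}$. The morphism $\epsilon$ is the extra edge pointing out of the square to $\PrTop{\omega}$; it is not obtained by pulling back $p$, so the stability-of-algebraic-under-pullback argument in Step~2 does not apply to it.

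More seriously, Step~3 asserts that $\epsilon$ is \'etale and hence presented by a discrete internal category $\Con{el}\,B$. This is false. The paper computes the presenting internal category $\mathbb{E}$ explicitly: its fibre $\mathbb{E}(n)$ over $n\in\omega$ is the full subcategory of $\FP{\omega}$ spanned by objects of the form $\Gamma\times\gl{n}$, which has many non-identity morphisms (projections, reindexings of the extra clocks in $\Gamma$). Since $\mathbb{E}$ is not discrete, $\epsilon$ is algebraic but not \'etale, and your terminal-object argument via a ``maximal point of the generic filter'' or a ``cofinite filter'' is aimed at the wrong target. The terminal object of $\mathbb{E}(n)$ is simply $\gl{n}$ itself (take $\Gamma$ empty), witnessed by the product projection $\Gamma\times\gl{n}\to\gl{n}$.

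The paper's route is to first recognise the composite $\Disc{\ClkSort}\to\BMTop\to\PtTop$ as algebraic, presented by $\FP{\omega}\rtimes\ClkSort$ via \cref{obs:algebraic-composition}; then exhibit a small fibration $\FibMor[\pi]{\FP{\omega}\rtimes\ClkSort}{\omega}$ projecting out the value of the distinguished clock; and finally take $\mathbb{E}$ to be the internal $\Sh{\PrTop{\omega}}$-category whose externalisation is $\pi$. The terminal object is then read off fibrewise as above. Your ``hard part'' paragraph correctly anticipates that the Grothendieck-construction bookkeeping is where the content lies, but the proposal never actually engages with it.
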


\begin{proof}
  Letting $\PtTop$ be the terminal $\SCat$-topos (so $\Sh{\PtTop}=\SCat$), we
  observe that the composite $\Disc{\ClkSort}\to \PrTop{\omega}\to\PtTop$ is the
  algebraic morphism of $\SCat$-topoi presented by the semidirect product
  $\FP{\omega}\rtimes \ClkSort$ and $\PrTop{\omega}$ is the algebraic $\SCat$-topos
  presented by $\omega$ itself.
  There is a small fibration $\FibMor[\pi]{\FP{\omega}\rtimes \ClkSort}{\omega}$ that
  projects out the ``value'' assigned to the generic clock, in which cartesian
  morphisms are those that leave everything but the value of the generic clock
  unchanged. As this fibration is small, there exists an internal category
  $\mathbb{E}$ in $\Sh{\PrTop{\omega}}$ such that $\FibMor[\pi]{\FP{\omega}\rtimes
  \ClkSort}{\omega}$ is its externalization; explicitly, the fiber $\mathbb{E}\prn{n}$
  for $n\in \omega$ is equivalent to the full subcategory of $\FP{\omega}$
  spanned by objects of the form $\Gamma\times\gl{n}$.
  We finally deduce that $\Mor[\epsilon]{\Disc{\ClkSort}}{\PrTop{\omega}}$ is the
  algebraic morphism of $\SCat$-topoi presented by the
  $\Sh{\PrTop{\omega}}$-category $\mathbb{E}$: the inverse image functor
  $\epsilon^*$ takes the generic $\omega$-filter $\Yo{\omega}{-}$ to the relatively constant
  $\omega$-filter $\Const{\PrTop{\mathbb{E}}}\Yo{\omega}{-} = \pi^*\Yo{\omega}{-}$.
\end{proof}

\begin{corollary}
  The Bizjak--M\o{}gelberg topos $\BMTop$ carries a globally adequate model of multi-clock synthetic guarded domain theory, parameterized in the generic indexing object $\ClkSort\in\Sh{\BMTop}$.
\end{corollary}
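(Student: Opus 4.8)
The plan is to obtain the corollary as a direct assembly of the general stability results of \cref{sec:sgdt}, with \cref{lem:clk-psh} supplying the one genuinely geometric input. By \cref{conv:relative} together with the fundamental theorem of topos theory, the slice geometric universe $\Sh{\BMTop}\downarrow\ClkSort$ is nothing but the underlying geometric universe of the \'etale $\SCat$-topos $\Disc{\ClkSort}$; and by \cref{lem:clk-psh} the morphism $\Mor[\epsilon]{\Disc{\ClkSort}}{\PrTop{\omega}}$ exhibits $\Disc{\ClkSort}$, viewed relative to the topos of trees, as the algebraic $\Sh{\PrTop{\omega}}$-topos $\PrTop{\mathbb{C}}$ for an internal $\Sh{\PrTop{\omega}}$-category $\mathbb{C}$ with a terminal object. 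Hence we have an equivalence of geometric universes $\Sh{\BMTop}\downarrow\ClkSort\simeq\Psh[\Sh{\PrTop{\omega}}]{\mathbb{C}}$.

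Now the topos of trees $\PrTop{\omega}$ is itself an elementary geometric model of synthetic guarded domain theory over $\SCat$ by \cref{ex:topos-of-trees} (a consequence of \cref{thm:base-model}), and this model is globally adequate relative to its structure morphism $\Mor[\PrTop{\omega}]{\SCat}{\Sh{\PrTop{\omega}}}$ by \cref{lem:iwfp-adequacy}: the hypotheses hold because $\omega$ is a total (hence connected) order in $\SCat$ and $\omega\Sub{\prec}=\omega$, since every $n$ lies strictly below its successor. Applying \cref{thm:stability-under-diagrams} with the base geometric universe taken to be $\Sh{\PrTop{\omega}}$ and the internal category taken to be $\mathbb{C}$, we conclude that the pointwise later modality makes $\Psh[\Sh{\PrTop{\omega}}]{\mathbb{C}}$ an elementary geometric model of synthetic guarded domain theory. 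Since this is a property of a geometric universe in the sense of \cref{def:elementary-sgdt}, transporting the later modality structure along the equivalence above equips $\Sh{\BMTop}\downarrow\ClkSort$ with an elementary geometric model of SGDT --- which is precisely a model of multi-clock SGDT over $\Sh{\BMTop}$ parameterized in the generic object $\ClkSort$.

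For global adequacy I would invoke \cref{lem:pw-global-adequacy} in the same configuration: since $\mathbb{C}$ has a terminal object and the later modality on $\Sh{\PrTop{\omega}}$ is globally adequate relative to $\Mor[\PrTop{\omega}]{\SCat}{\Sh{\PrTop{\omega}}}$, the pointwise later modality on $\Psh[\Sh{\PrTop{\omega}}]{\mathbb{C}}$ is globally adequate relative to the composite $\Mor[\PrTop{\omega};\PrTop{\mathbb{C}}]{\SCat}{\Psh[\Sh{\PrTop{\omega}}]{\mathbb{C}}}$, which by \cref{obs:algebraic-composition} is presented by the semidirect product $\omega\rtimes\mathbb{C}$ and hence coincides with the structure morphism of $\Disc{\ClkSort}$ as an $\SCat$-topos. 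The same reasoning applies \emph{mutatis mutandis} to the Sterling--Harper topos $\SHTop$, whose analogue of \cref{lem:clk-psh} again presents the generic clock as an internal presheaf category with a terminal object. I expect the main obstacle to be purely a matter of bookkeeping: carefully matching up, via the relative point of view, the slice $\Sh{\BMTop}\downarrow\ClkSort$, the underlying geometric universe of $\Disc{\ClkSort}$, and the internal presheaf category $\Psh[\Sh{\PrTop{\omega}}]{\mathbb{C}}$, and checking that their structure morphisms over $\SCat$ agree; the substantive work has already been carried out in \cref{lem:clk-psh}.
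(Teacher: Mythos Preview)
Your proposal is correct and follows essentially the same route as the paper: use \cref{lem:clk-psh} to present $\Disc{\ClkSort}$ as an algebraic $\Sh{\PrTop{\omega}}$-topos $\PrTop{\mathbb{C}}$ with $\mathbb{C}$ having a terminal object, apply \cref{thm:stability-under-diagrams} to lift the SGDT model from the topos of trees, and then combine \cref{lem:iwfp-adequacy} with \cref{lem:pw-global-adequacy} for global adequacy. The paper's proof is terser and leaves implicit the bookkeeping you spell out (the identification of $\Sh{\BMTop}\downarrow\ClkSort$ with $\Sh{\Disc{\ClkSort}}$ and the matching of structure morphisms via \cref{obs:algebraic-composition}), but the logical structure is the same.
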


\begin{proof}
  The result follows from \cref{lem:clk-psh} via \cref{lem:iwfp-adequacy}.
  Viewing $\Disc{\ClkSort}$ as an algebraic topos over $\PrTop{\omega}$, we
  automatically have a model of synthetic guarded domain theory by
  \cref{thm:stability-under-diagrams}. Global adequacy follows from
  \cref{lem:iwfp-adequacy} via \cref{lem:pw-global-adequacy}.
\end{proof}

\nocite{bahr-grathwohl-mogelberg:2017,veltri-vezzosi:2020,kristensen-mogelberg-vezzosi:2022,mannaa-mogelberg-veltri:2020}

\nocite{johnstone:1992,johnstone:1994,vickers:1992,bunge-funk:1996,bunge-funk:1998,bunge-funk:2006}

\nocite{johnstone:2002}

\bibliographystyle{entics}
\bibliography{references/refs-bibtex}

%
\appendix

\section{Proofs of results}\label{sec:appendix}

\subsection{Guarded recursive terms \vs L\"ob induction}

\begin{notation}[Internal language]
  When working in the internal language of an elementary geometric model
  $\ECat$ of synthetic guarded domain theory, we will make use of the notations
  of guarded dependent type theory~\cite{bgcmb:2016}. In particular, we employ
  the notation of \emph{delayed substitutions} for the dependent version of the
  later modality defined schematically below for a pair of dependent types
  $\Gamma\vdash A\ \mathit{type}$ and $\Gamma,x:A\vdash Bx\ \mathit{type}$:
  \[
    \begin{tikzpicture}[diagram]
      \SpliceDiagramSquare{
        width = 5cm,
        ne = \Ltr\prn{\Gamma,x:A\vdash Bx},
        se = \Ltr\prn{\Gamma\vdash A},
        sw = {\Gamma\vdash {\color{magenta}\Ltr{A}}},
        nw = {\Gamma,u:\Ltr{A}\vdash {\color{magenta}\Ltr\brk{x\leftarrow u}Bx}},
        east = \Ltr\pi\Sub{Bx},
        west = \pi\Sub{\Ltr\brk{x\leftarrow u}Bx},
        west/style = fibration,
        sw/style = pullback,
        nw/style = pullback,
        west/node/style = upright desc,
      }

      \node (ssw) [below = of sw] {$\Gamma$};
      \node (sse) [below = of se] {$\Ltr{\Gamma}$};

      \node (sww) [left = 6cm of sw] {$\Gamma\vdash A$};
      \node (ssww) [left = 6cm of ssw] {$\Gamma$};
      \node (nww) [left = 6cm of nw] {$\Gamma,x:A\vdash Bx$};
      \draw[->] (nww) to node [above] {$\Next_B\brk{x\leftarrow u}$} (nw);

      \draw[->] (sww) to node [upright desc] {$\Next_A$} (sw);
      \draw[fibration] (sww) to node [left] {$\pi_A$} (ssww);
      \draw[double] (ssww) to (ssw);
      \draw[fibration] (nww) to node [left] {$\pi\Sub{Bx}$} (sww);

      \draw[fibration] (se) to node [right] {$\Ltr{\pi_A}$} (sse);
      \draw[fibration] (sw) to node [upright desc] {$\pi\Sub{\Ltr{A}}$} (ssw);
      \draw[->] (ssw) to node [below] {$\Next_\Gamma$} (sse);
    \end{tikzpicture}
  \]
\end{notation}

\begin{observation}\label{lem:guarded-unique-choice}
  Let $\Mor[\phi]{A}{\Omega}$ be a predicate that holds for at most one element of $A$;
  then $\Ltr{\exists x:A.\phi{x}}$ implies $\exists u:\Ltr{A}.
  \Ltr\brk{x\leftarrow u}\phi{x}$.
\end{observation}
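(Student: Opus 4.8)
The plan is to show that, under the stated hypothesis, both sides of the implication denote one and the same subterminal object of $\ECat$. Write $\sigma \coloneqq \Sum{x:A}\phi{x}$ for the comprehension of $\phi$, with display map $\Mor[\pi]{\sigma}{A}$. The assumption that $\phi$ holds for at most one element of $A$ says precisely that $\sigma$ is a subsingleton, \ie that the unique map $\Mor{\sigma}{\ObjTerm{\ECat}}$ is monic; hence $\sigma$ already equals its own image, and the proposition $\exists x:A.\phi{x}$ \emph{is} (the subterminal) $\sigma$. Applying the functor $\Ltr$ — whose internalization on $\Omega$ is, by construction, just its external action on subterminals — we therefore get $\Ltr\prn{\exists x:A.\phi{x}} \cong \Ltr\sigma$.

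For the right-hand side I would unfold the definition of the dependent later modality under delayed substitution recalled just above: the later modality, \emph{qua} fibered endofunctor, carries $\Mor[\pi]{\sigma}{A}$ to $\Mor[\Ltr\pi]{\Ltr\sigma}{\Ltr A}$, and $\Ltr\brk{x\leftarrow u}\phi{x}$ is by construction the fiber of $\Ltr\pi$ over $u:\Ltr A$; consequently $\Sum{u:\Ltr A}\Ltr\brk{x\leftarrow u}\phi{x} \cong \Ltr\sigma$. Since $\Ltr$ is left exact it preserves monos and the terminal object, so $\Ltr\sigma \hookrightarrow \Ltr\ObjTerm{\ECat} \cong \ObjTerm{\ECat}$ is again monic; thus $\Ltr\sigma$ is itself a subsingleton, and the canonical comparison $\Mor{\Ltr\sigma}{\exists u:\Ltr A.\Ltr\brk{x\leftarrow u}\phi{x}}$ is an isomorphism as well. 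Chaining the two computations gives $\Ltr\prn{\exists x:A.\phi{x}} \cong \Ltr\sigma \cong \prn{\exists u:\Ltr A.\Ltr\brk{x\leftarrow u}\phi{x}}$, which is the claimed implication — indeed a bi-implication.

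I expect the main obstacle to be bookkeeping rather than mathematical depth. The delicate point is to identify $\Ltr\brk{x\leftarrow u}\phi{x}$ with the fibered later of the \emph{subobject} $\sigma\hookrightarrow A$, so that its total space over $\Ltr A$ is exactly $\Ltr\sigma$, with no spurious reindexing along $\Next$ intervening; this is what the pullback square defining delayed substitution guarantees, over the terminal context and, \emph{mutatis mutandis}, over an arbitrary one. One must also be careful to use the ``at most one element'' hypothesis on \emph{both} sides, since it is exactly what collapses the image $\exists x:A.\phi{x}$ onto the total space $\Sum{x:A}\phi{x}$ at each stage: without it, $\Ltr$ need not commute with image factorization and the implication genuinely fails, whereas the converse implication holds with no hypothesis at all, being induced by the image projection $\Mor{\Sum{x:A}\phi{x}}{\exists x:A.\phi{x}}$.
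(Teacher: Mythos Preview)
The paper records this result as an \emph{observation} and gives no proof, so there is nothing to compare against; your argument is correct and supplies exactly the details the authors suppress. The key points --- that the ``at most one'' hypothesis makes $\sigma=\Sum{x:A}\phi{x}$ subterminal, that left exactness of $\Ltr$ then makes $\Ltr\sigma$ subterminal, and that the delayed-substitution pullback in the terminal context identifies $\Sum{u:\Ltr A}\Ltr\brk{x\leftarrow u}\phi{x}$ with $\Ltr\sigma$ --- are precisely the ingredients one expects the authors had in mind, and your remark that the converse implication needs no hypothesis is also correct.
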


\LemGRIffLoeb*

\begin{proof}
  The only-if direction is immediate. For the converse, we will employ the
  principle of unique choice.  In particular, we consider the predicate
  $x:A\mid \phi\,x$ defined like so:
  \[
    \phi{x}\coloneqq \prn{x = f\,\prn{\Next_A\,x} \land \forall y. y = f\,\prn{\Next_A\,y} \Rightarrow y = x}
  \]

  To show that there exists $x:A$ such that $\phi{x}$ holds, it suffices by
  L\"ob induction to assume $\Ltr\exists x:A.\phi{x}$ and then exhibit $x:A$
  satisfying $\phi{x}$. By \cref{lem:guarded-unique-choice} we may assume that
  there exists some $u:\Ltr{A}$ such that $\Ltr\brk{z\leftarrow{}u}\phi{z}$
  holds. We choose $x \coloneqq fu$ and must check that $\phi\,\prn{fx}$ holds.
  \begin{enumerate}

    \item \emph{Existence.} To check that $fu = f\,\prn{\Next_A\,\prn{fu}}$, we will verify that
      $u = \Next_A\,\prn{fu}$ in $\Ltr{A}$, or equivalently
      $\Ltr\brk{z\leftarrow u}\prn{z = fu}$. By well-pointedness, this is
      the same as $\Ltr\brk{z\leftarrow u}\prn{z = f\,\prn{\Next_A\,z}}$;
      but
      $z=f\,\prn{\Next_A\,z}$ follows from $\phi{z}$, hence our assumption
      $\Ltr\brk{z\leftarrow u}\phi{z}$ suffices.

    \item \emph{Uniqueness.} Next we must check that for all $y:A$ where $y=
      f\,\prn{\Next_A\,y}$, we have $y= fu$; in other words, we must check that
      $f\,\prn{\Next_A\,y} = fu$. By congruence with $f$ it suffices to check
      that $\Next_A\,y = u$ in $\Ltr{A}$, which (as above) is the same as to
      check that $\Ltr\brk{z\leftarrow u}\prn{y = z}$. This again follows from
      our assumption $\Ltr\brk{z\leftarrow u}\phi{z}$.
      \qedhere

  \end{enumerate}
\end{proof}

\subsection{Stability properties for geometric models of SGDT}\label{appendix:stability}

\begin{observation}\label{lem:pw-ltr-gsec}
  If the internal category $\mathbb{C}$ has a terminal object, then the
  pointwise later modality of $\Sh{\PrTop{\mathbb{C}}}$ commutes with global
  sections in the sense that the following canonical 2-cell is an isomorphism:
  \[
    \begin{tikzpicture}[baseline = (rect.center)]
      \node[string/frame = 3cm and 3cm] (rect) at (0,0) {};
      \node[string/node] (beta) at (rect.center) {$\beta$};
      \draw[string/crooked wire,spath/save = wire0] ([yshift=-.2cm]beta.north west) -| ([xshift=-.55cm]rect.north) node [at end,above] {$\GSec{\PrTop{\mathbb{C}}}$};
      \draw[string/crooked wire,spath/save = wire1] ([yshift=-.2cm]beta.north east) -| ([xshift=.55cm]rect.north) node [at end,above] {$\Ltr$};
      \draw[string/crooked wire,spath/save = wire2] ([yshift=.2cm]beta.south west) -| ([xshift=-.55cm]rect.south) node {$\Ltr\Sup{\mathbb{C}}$};
      \draw[string/crooked wire,spath/save = wire3] ([yshift=.2cm]beta.south east) -| ([xshift=.55cm]rect.south) node {$\GSec{\PrTop{\mathbb{C}}}$};
      \node[string/region label, between = beta.west and rect.west] {$\Sh{\PrTop{\mathbb{C}}}$};
      \node[string/region label, between = beta.east and rect.east] {$\SCat$};
      \begin{pgfonlayer}{background}
        \fill[string/region 0] (rect.north west) rectangle (rect.south east);
        \fill[string/region 1] (spath cs:wire0 0) [spath/use = {weld,wire0}] -- (rect.north east) -- (rect.south east) -- (spath cs:wire3 1) [spath/use={weld,wire3,reverse}] -- (spath cs:wire3 0);
      \end{pgfonlayer}
    \end{tikzpicture}
  \]

  The 2-cell $\beta$ depicted above is the distribution of $\Ltr$ over the limit of a given presheaf.
\end{observation}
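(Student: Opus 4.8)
The plan is to use the hypothesis on $\mathbb{C}$ to collapse $\GSec{\PrTop{\mathbb{C}}}$ to an evaluation functor, after which the statement becomes a direct unfolding of the pointwise definition of $\Ltr\Sup{\mathbb{C}}$. Recall that $\GSec{\PrTop{\mathbb{C}}}E$ is the $\SCat$-indexed limit $\Lim{c\in\OpCat{\mathbb{C}}}E_c$ of the presheaf $E$ viewed as a diagram on $\OpCat{\mathbb{C}}$. Since $\mathbb{C}$ has a terminal object $\ObjTerm{\mathbb{C}}$, the category $\OpCat{\mathbb{C}}$ has an initial object, and the limit of a diagram over a category with an initial object is computed by evaluating at that object; internalizing this elementary fact to $\SCat$ gives a natural isomorphism $\GSec{\PrTop{\mathbb{C}}}E\cong E_{\ObjTerm{\mathbb{C}}}$. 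This is the same observation already invoked before \cref{lem:pw-global-adequacy} to see that $\GSec{\PrTop{\mathbb{C}}}$ is $\SCat$-cocontinuous.

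First I would record this identification of $\GSec{\PrTop{\mathbb{C}}}$ with evaluation at $\ObjTerm{\mathbb{C}}$, taking care to phrase the ``limit over a category with an initial object'' argument in the internal language of $\SCat$, since the limit in question is $\SCat$-indexed rather than finite. Next I would unfold the definition of the pointwise later modality: $\prn{\Ltr\Sup{\mathbb{C}}E}_c = \Ltr E_c$ for every $c$, so in particular $\prn{\Ltr\Sup{\mathbb{C}}E}_{\ObjTerm{\mathbb{C}}} = \Ltr\prn{E_{\ObjTerm{\mathbb{C}}}}$. Composing, one obtains a chain of isomorphisms natural in $E$,
\[
  \GSec{\PrTop{\mathbb{C}}}\prn{\Ltr\Sup{\mathbb{C}}E} \;\cong\; \prn{\Ltr\Sup{\mathbb{C}}E}_{\ObjTerm{\mathbb{C}}} \;=\; \Ltr\prn{E_{\ObjTerm{\mathbb{C}}}} \;\cong\; \Ltr\prn{\GSec{\PrTop{\mathbb{C}}}E},
\]
and it remains only to check that this composite coincides with the canonical 2-cell $\beta$ pictured above, i.e. with the map obtained by applying $\Ltr$ to the limit cone of $E$ and invoking the universal property of the limit cone of $\Ltr\Sup{\mathbb{C}}E$. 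Equivalently, and more conceptually: a limit indexed by a category with an initial object is an \emph{absolute} limit and is therefore preserved by every functor, in particular by $\Ltr$; hence the comparison map $\Ltr\prn{\Lim{c}E_c}\to\Lim{c}\Ltr E_c$ is an isomorphism, with no appeal to left exactness of $\Ltr$ required.

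I do not expect a genuine obstacle here; the content is all bookkeeping. The fussiest step is the coherence check that the string-diagrammatic $\beta$ --- defined via the adjunction $\Const{\PrTop{\mathbb{C}}}\dashv\GSec{\PrTop{\mathbb{C}}}$ and the pointwise structure on $\Ltr\Sup{\mathbb{C}}$ --- really is the distribution-through-the-limit map, but this is routine rewriting in the pictorial calculus of the kind used elsewhere in the paper. It is worth noting that the hypothesis that $\mathbb{C}$ have a terminal object is essential: $\Ltr$ preserves only \emph{finite} limits, whereas $\GSec{\PrTop{\mathbb{C}}}$ computes an $\SCat$-indexed limit that in general is not finite, so without this hypothesis the comparison $\beta$ need not be invertible.
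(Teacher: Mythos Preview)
Your argument is correct and is exactly the natural justification: when $\mathbb{C}$ has a terminal object, $\GSec{\PrTop{\mathbb{C}}}$ is naturally isomorphic to evaluation at that object, whence the pointwise definition of $\Ltr\Sup{\mathbb{C}}$ makes the comparison an isomorphism on the nose; equivalently, the limit is absolute and hence preserved by $\Ltr$. The paper states this as an \emph{observation} without proof, so there is nothing to compare against beyond noting that your reasoning is the intended one; your remark that left exactness of $\Ltr$ is not needed here (only the terminal-object hypothesis) is a nice sharpening.
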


\LemPshGlobalAdequacy*

\begin{proof}
  We want to show that the following 2-cell is an isomorphism:
  \begin{equation}\label[diagram]{diag:globpsh1}
    \begin{tikzpicture}[baseline=(rect.base)]
      \node[string/frame = 4.25cm and 3cm] (rect) at (0,0) {};
      \node[string/node] (nu) at ([xshift=-.25cm,yshift=-1.5cm]rect.north) {$\Next\Sup{\mathbb{C}}$};

      \draw[string/wire, spath/save=wire1] ([xshift=-1.25cm]rect.north) -- ([xshift=-1.25cm]rect.south) node {$\mathbb{N}$} node [at start, above] {$\mathbb{N}$};

      \draw[string/wire, spath/save=wire2] (nu.south) -- ([xshift=-.25cm]rect.south) node {$\Ltr\Sup{\mathbb{C}}$} ;

      \draw[string/wire, spath/save=wire3] ([xshift=.75cm]rect.north) -- ([xshift=.75cm]rect.south) node {$\GSec{\PrTop{\mathbb{C}}}$} node [at start, above] {$\GSec{\PrTop{\mathbb{C}}}$};
      \draw[string/wire, spath/save=wire4] ([xshift=1.5cm]rect.north) -- ([xshift=1.5cm]rect.south) node {$\GSec{S}$} node [at start, above] {$\GSec{S}$};
      \node[string/region label, between = rect.west and spath cs:wire1 .5] {$\mathbf{1}$};
      \node[string/region label, between = nu.north and rect.north] {$\Sh{\PrTop{\mathbb{C}}}$};
      \node[string/region label, between = spath cs:wire3 .5 and spath cs:wire4 .5] {$\SCat$};
      \node[string/region label, between = spath cs:wire4 .5 and rect.east] {$\UCat$};

      \begin{pgfonlayer}{background}
        \fill[string/region 0] (rect.north west) rectangle (spath cs:wire1 1);
        \fill[string/region 1] (spath cs:wire1 0) rectangle (spath cs:wire3 1);
        \fill[string/region 2] (spath cs:wire3 0) rectangle (spath cs:wire4 1);
        \fill[string/region 3] (spath cs:wire4 0) rectangle (rect.south east);
      \end{pgfonlayer}
    \end{tikzpicture}
  \end{equation}

  We may paste an isomorphism onto \cref{diag:globpsh1} and check that the result
  is an isomorphism: the left-hand isomorphism witnesses the preservation of the
  natural numbers object by $\GSec{\PrTop{\mathbb{C}}}$ and the right-hand
  isomorphism is from \cref{lem:pw-ltr-gsec}:
  \begin{equation}\label[diagram]{diag:globpsh2}
    \begin{tikzpicture}[baseline=(rect.base)]
      \node[string/frame = 4.25cm and 3cm] (rect) at (0,0) {};

      \node[string/node] (isol) at ([xshift=-1.25cm,yshift=-2.25cm]rect.north) {$\cong$};
      \node[string/node] (nu) at ([xshift=0cm,yshift=-.5cm]rect.north) {$\Next\Sup{\mathbb{C}}$};
      \node[string/node] (isor) at ([xshift=0,yshift=-1.25cm]rect.north) {$\Inv{\beta}$};

      \draw[string/crooked wire, spath/save=wire1] ([xshift=-1.75cm]rect.north) |- (isol.west) node [at start, above] {$\mathbb{N}$};
      \draw[string/wire,spath/save=wire2] (isol.south) -- ([xshift=-1.25cm]rect.south) node {$\mathbb{N}$};

      \draw[string/wire,spath/save=wire3] (nu.south) -- (isor.north);
      \draw[string/wire,spath/save=wire4] (isor.south) -- ([xshift=0cm]rect.south) node {$\Ltr$};

      \draw[string/crooked wire,spath/save=wire5]  (isol.east) -| ([xshift=-.725cm,yshift=-1.75cm]rect.north) |- (isor.west);

      \draw[string/crooked wire,spath/save=wire6] ([xshift=.75cm]rect.north) |- (isor.east) node [at start, above] {$\GSec{\PrTop{\mathbb{C}}}$};
      \draw[string/wire,spath/save=wire7] ([xshift=1.5cm]rect.north) -- ([xshift=1.5cm]rect.south) node {$\GSec{S}$} node [at start, above] {$\GSec{S}$};

      \node[string/region label, between = isol.south west and rect.south west,yshift=.1cm] {\small$\mathbf{1}$};
      \node[string/region label, above = .7cm of isol.north] {$\Sh{\PrTop{\mathbb{C}}}$};
      \node[string/region label, between = spath cs:wire4 1 and spath cs:wire7 1, yshift=1cm] {$\SCat$};
      \node[string/region label, between = spath cs:wire7 .5 and rect.east] {$\UCat$};

      \begin{pgfonlayer}{background}
        \fill[string/region 0] (rect.north west) -- (rect.south west) -- (spath cs:wire2 1) -- (spath cs:wire2 0) [spath/use={weld,reverse,wire1}]-- (spath cs:wire1 0);
        \fill[string/region 1] (spath cs:wire1 0) [spath/use={weld,wire1}] -- (spath cs:wire1 1) -- (spath cs:wire5 0) [spath/use={weld,wire5}] -- (spath cs:wire5 1) -- (spath cs:wire6 1) [spath/use={weld,wire6,reverse}] -- (spath cs:wire6 0);
        \fill[string/region 2] (spath cs:wire2 1) -- (spath cs:wire7 1) -- (spath cs:wire7 0) -- (spath cs:wire6 0) [spath/use={weld,wire6}] -- (spath cs:wire6 1) -- (spath cs:wire5 1) [spath/use={weld,wire5,reverse}] -- (spath cs:wire5 0) -- (spath cs:wire2 0);
        \fill[string/region 3] (spath cs:wire7 0) rectangle (rect.south east);
      \end{pgfonlayer}
    \end{tikzpicture}
  \end{equation}

  \cref{diag:globpsh2} is equal to the following, which is an isomorphism by assumption:
  \begin{equation}\label[diagram]{diag:globpsh3}
    \begin{tikzpicture}[baseline=(rect.base)]
      \node[string/frame = 4.25cm and 3cm] (rect) at (0,0) {};

      \node[string/node] (isol) at ([xshift=-1.25cm,yshift=-1cm]rect.north) {$\cong$};
      \node[string/node] (nu) at ([xshift=0cm,yshift=-1.5cm]rect.north) {$\Next$};

      \draw[string/crooked wire,spath/save=wire1] ([xshift=-1.75cm]rect.north) |- (isol.west) node [at start, above] {$\mathbb{N}$};
      \draw[string/crooked wire,spath/save=wire2] ([xshift=-.75cm]rect.north) |- (isol.east) node [at start, above] {$\GSec{\PrTop{\mathbb{C}}}$};
      \draw[string/wire,spath/save=wire3] (isol.south) -- ([xshift=-1.25cm]rect.south) node {$\mathbb{N}$};

      \draw[string/wire,spath/save=wire4] (nu.south) -- ([xshift=0cm]rect.south) node {$\Ltr$};

      \draw[string/wire,spath/save=wire5] ([xshift=1.5cm]rect.north) -- ([xshift=1.5cm]rect.south) node {$\GSec{S}$} node [at start, above] {$\GSec{S}$};

      \node[string/region label, between = isol and rect.south west,yshift=.1cm] {$\mathbf{1}$};
      \node[string/region label, above = 0cm of isol.north] {\small$\Sh{\PrTop{\mathbb{C}}}$};
      \node[string/region label, above = .3cm of nu.north] {$\SCat$};
      \node[string/region label, between = spath cs:wire5 .5 and rect.east] {$\UCat$};

      \begin{pgfonlayer}{background}
        \fill[string/region 0] (rect.north west) -- (rect.south west) -- (spath cs:wire3 1) -- (spath cs:wire3 0) -- (spath cs:wire1 1) [spath/use={weld,wire1,reverse}] -- (spath cs:wire1 0);
        \fill[string/region 1] (spath cs:wire1 0) [spath/use={weld,wire1}] -- (spath cs:wire1 1) -- (spath cs:wire2 1) [spath/use={weld,wire2,reverse}] -- (spath cs:wire2 0);
        \fill[string/region 2] (spath cs:wire2 0) [spath/use={weld,wire2}] -- (spath cs:wire2 1) -- (spath cs:wire3 0) -- (spath cs:wire3 1) -- (spath cs:wire5 1) -- (spath cs:wire5 0);
        \fill[string/region 3] (spath cs:wire5 0) rectangle (rect.south east);
      \end{pgfonlayer}
    \end{tikzpicture}
  \end{equation}

  Thus \cref{diag:globpsh1} is an isomorphism.
\end{proof}

\LemLocalizedLaterWellPointed*

\begin{proof}
  We proceed by rewiring in several steps.
  \begin{equation}
    \begin{tikzpicture}[baseline=(rect.base)]
      \node[string/frame = 2.5cm and 2cm] (rect) at (0,0) {};
      \node[string/node,xshift=-.5cm] (next) at (rect.center) {$\Next_L$};
      \draw[string/wire] (next.south) -- ([xshift=-.5cm]rect.south) node {$\Ltr_L$};
      \draw[string/wire] ([xshift=.5cm]rect.north) -- ([xshift=.5cm]rect.south) node {$\Ltr_L$};
      \begin{pgfonlayer}{background}
        \fill[string/region 0] (rect.north west) rectangle (rect.south east);
      \end{pgfonlayer}
    \end{tikzpicture}
  \end{equation}

  First we unfold definitions.
  \begin{equation}
    \begin{tikzpicture}[baseline=(rect.base)]
      \node[string/frame = 4.25cm and 3cm] (rect) at (0,0) {};

      \node[string/node] (eps) at ([xshift=-1cm,yshift=-.8cm]rect.north) {\small$\Inv{\epsilon}$};
      \node[string/node, below = .25cm of eps] (nu) {$\Next$};
      \draw[string/crooked wire, spath/save = bendl] (eps.west) -| (rect.220) node {$\GSec{L}$};
      \draw[string/crooked wire, spath/save = bendr] (eps.east) -| (rect.260) node {$\Const{L}$};
      \draw[string/wire] (nu.south) -- ([xshift=-1cm]rect.south) node {$\Ltr$} ;

      \draw[string/wire,spath/save=dir1] ([xshift=.75cm]rect.north) -- ([xshift=.75cm]rect.south) node {$\GSec{L}$};
      \draw[string/wire,spath/save=ltr1] ([xshift=1.25cm]rect.north) -- ([xshift=1.25cm]rect.south) node {$\Ltr$};
      \draw[string/wire,spath/save=inv1] ([xshift=1.75cm]rect.north) -- ([xshift=1.75cm]rect.south) node {$\Const{L}$};
      \begin{pgfonlayer}{background}
        \fill[string/region 0] (rect.north west) rectangle (rect.south east);
        \fill[string/region 1] (spath cs:bendl 0) [spath/use = {weld,bendl}] -- (spath cs:bendr 1) [spath/use = {weld,reverse,bendr}];
        \fill[string/region 1] (spath cs:dir1 0) rectangle (spath cs:inv1 1);
      \end{pgfonlayer}
    \end{tikzpicture}
  \end{equation}

  Next we rewire using the fact that under direct image the inverse to the counit becomes the unit.
  \begin{equation}
    \begin{tikzpicture}[baseline=(rect.base)]
      \node[string/frame = 4cm and 3cm] (rect) at (0,0) {};
      \node[string/node] (nu) at (-1cm,1cm) {\small$\Next$};
      \node[string/node] (eta) at (0,.5cm) {$\eta$};
      \draw[string/crooked wire,spath/save=bendl] (eta.west) -| (-.5cm,-1.5cm) node {$\Const{L}$};
      \draw[string/crooked wire,spath/save=bendr] (eta.east) -| (.5cm,-1.5cm) node {$\GSec{L}$};
      \draw[string/wire,spath/save=dir0] ([xshift=-1.5cm]rect.north) -- ([xshift=-1.5cm]rect.south) node {$\GSec{L}$};
      \draw[string/wire] (nu.south) -- (-1cm,-1.5cm) node {$\Ltr$};
      \draw[string/wire,spath/save=ltr1] ([xshift=1cm]rect.north) -- ([xshift=1cm]rect.south) node {$\Ltr$};
      \draw[string/wire,spath/save=inv1] ([xshift=1.5cm]rect.north) -- ([xshift=1.5cm]rect.south) node {$\Const{L}$};
      \begin{pgfonlayer}{background}
        \fill[string/region 0] (rect.north west) rectangle (spath cs:dir0 1);
        \fill[string/region 0] (spath cs:inv1 0) rectangle (rect.south east);
        \fill[string/region 1] (spath cs:dir0 0) rectangle (spath cs:inv1 1);
        \fill[string/region 0] (spath cs:bendl 0) [spath/use = {weld,bendl}] -- (spath cs:bendr 1) [spath/use = {reverse,weld,bendr}];
      \end{pgfonlayer}
    \end{tikzpicture}
  \end{equation}

  Then we rewire using our assumption that $\prn{\Ltr,\Next}$ is well-pointed.
  \begin{equation}
    \begin{tikzpicture}[baseline=(rect.base)]
      \node[string/frame = 4cm and 3cm] (rect) at (0,0) {};
      \node[string/node] (nu) at (1cm,.5cm) {\small$\Next$};
      \node[string/node] (eta) at (0,1cm) {$\eta$};
      \draw[string/crooked wire,spath/save=inv0] (eta.west) -| (-.5cm,-1.5cm) node {$\Const{L}$};
      \draw[string/crooked wire,spath/save=dir1] (eta.east) -| (.5cm,-1.5cm) node {$\GSec{L}$};
      \draw[string/wire,spath/save=dir0] ([xshift=-1.5cm]rect.north) -- ([xshift=-1.5cm]rect.south) node {$\GSec{L}$};
      \draw[string/wire,spath/save=ltr0] (nu.south) -- (1cm,-1.5cm) node {$\Ltr$};
      \draw[string/wire,spath/save=ltr1] ([xshift=-1cm]rect.north) -- ([xshift=-1cm]rect.south) node {$\Ltr$};
      \draw[string/wire,spath/save=inv1] ([xshift=1.5cm]rect.north) -- ([xshift=1.5cm]rect.south) node {$\Const{L}$};
      \begin{pgfonlayer}{background}
        \fill[string/region 0] (rect.north west) rectangle (spath cs:dir0 1);
        \fill[string/region 1] (spath cs:dir0 0) rectangle (spath cs:inv1 1);
        \fill[string/region 0] (spath cs:inv1 0) rectangle (rect.south east);
        \fill[string/region 0] (spath cs:inv0 1) [spath/use={weld,reverse,inv0}] -- (spath cs:dir1 0) [spath/use={weld,dir1}] (spath cs:dir1 1);
      \end{pgfonlayer}
    \end{tikzpicture}
  \end{equation}

  We rewire using the fact that the unit becomes the inverse to the counit under inverse image.
  \begin{equation}
    \begin{tikzpicture}[baseline=(rect.base)]
      \node[string/frame = 4cm and 3cm] (rect) at (0,0) {};
      \node[string/node] (nu) at (.75cm,0cm) {\small$\Next$};
      \node[string/node] (eps) at (.75cm,1cm) {$\Inv{\epsilon}$};
      \draw[string/wire,spath/save=ltr1] (nu.south) -- (.75cm,-1.5cm) node [upright desc] {$\Ltr$};
      \draw[string/wire,spath/save=ltr0] ([xshift=-1cm]rect.north) -- ([xshift=-1cm]rect.south) node {$\Ltr$};
      \draw[string/wire,spath/save=inv0] ([xshift=-.5cm]rect.north) -- ([xshift=-.5cm]rect.south) node {$\Const{L}$};
      \draw[string/wire,spath/save=dir0] ([xshift=-1.5cm]rect.north) -- ([xshift=-1.5cm]rect.south) node {$\GSec{L}$};
      \draw[string/crooked wire,spath/save=dir1] (eps.west) -| (0cm,-1.5cm) node {$\GSec{L}$};
      \draw[string/crooked wire,spath/save=inv1] (eps.east) -| (1.5cm,-1.5cm) node {$\Const{L}$};

      \begin{pgfonlayer}{background}
        \fill[string/region 0] (rect.north west) rectangle (spath cs:dir0 1);
        \fill[string/region 1] (spath cs:dir0 0) rectangle (spath cs:inv0 1);
        \fill[string/region 0] (spath cs:inv0 0) rectangle (rect.south east);
        \fill[string/region 1] (spath cs:dir1 1) [spath/use={weld,reverse,dir1}] -- (spath cs:inv1 0) [spath/use={weld,inv1}] (spath cs:inv1 1);
      \end{pgfonlayer}
    \end{tikzpicture}
  \end{equation}

  Folding definitions, we are done.
  \begin{equation*}
    \begin{tikzpicture}[baseline=(rect.base)]
      \node[string/frame = 2.5cm and 2cm] (rect) at (0,0) {};
      \node[string/node,xshift=.5cm] (next) at (rect.center) {$\Next_L$};
      \draw[string/wire] (next.south) -- ([xshift=.5cm]rect.south) node {$\Ltr_L$};
      \draw[string/wire] ([xshift=-.5cm]rect.north) -- ([xshift=-.5cm]rect.south) node {$\Ltr_L$};
      \begin{pgfonlayer}{background}
        \fill[string/region 0] (rect.north west) rectangle (rect.south east);
      \end{pgfonlayer}
    \end{tikzpicture}
    \qedhere
  \end{equation*}
\end{proof}

\subsection{A base geometric model of SGDT}\label{appendix:base-model}

In this section, recall that $\mathbb{A}$ is a frame in a geometric universe $\SCat$
equipped with a well-founded basis $\mathbb{K}$. We will write
$\Mor[i]{\widetilde{\mathbb{A}}}{\PrTop{\mathbb{A}}}$ for the corresponding embedding of
$\SCat$-topoi. We have obtained a well-pointed later modality structure
$\prn{\Ltr,\Next}$ on $\Sh{\widetilde{\mathbb{A}}}$ from the well-pointed later modality
structure $\prn{\PrTop{\Pred}^*,\nu}$ on $\Sh{\PrTop{\mathbb{A}}}$ using \cref{con:localized-later-modality}, but it remains to show
that the former supports L\"ob induction.

\ThmBaseModel*

\begin{proof}
  This is easily verified in the Kripke-Joyal semantics of $\Sh{\widetilde{\mathbb{A}}}$. Fixing
  $u\in \mathbb{A}$ and a closed sieve $\phi\in\Omega{u}$ such that $u\Vdash
  \prn{\Ltr{\phi}\Rightarrow\phi}=\top$ we must check that $u\Vdash \phi=\top$.
  As $\mathbb{K}$ is a basis for $\mathbb{A}$, we know that $u = \Disj{k\in \mathbb{K}\Sub{u}}k$, so by local
  character it suffices to verify that $k\Vdash k^*\phi = \top$ for each $k\in
  \mathbb{K}\Sub{u}$.
  By well-founded induction we may assume that $l\Vdash l^*\phi = \top$ for all
  $l\prec k$. From this assumption we have $k\Vdash \PrTop{\Pred}^*\prn{k^*\phi
  = \top}$ and hence $k\Vdash \Ltr\prn{k^*\phi=\top}$, so by our assumption
  that $u\Vdash\prn{\Ltr{\phi}\Rightarrow\phi}=\top$ we are done.
\end{proof}

\end{document}